\def\demo{\noindent{\bf Proof. }}
\def\sqr#1#2{{\vcenter{\hrule height.#2pt
        \hbox{\vrule width.#2pt height#1pt \kern#1pt
                \vrule width.#2pt}
        \hrule height.#2pt}}}
\def\square{\mathchoice\sqr64\sqr64\sqr{4}3\sqr{3}3}
\def\QED{\hfill$\square$}
\def\lto{\longrightarrow}
\define\Hom{\operatorname{Hom}}
\def\Homcont{\operatorname{Homcont}}
\def\m{\mathfrak m}
\def\D{\mathcal D}
\newcommand\HH{\mathrm{H}}
\newtheorem{theorem}{Theorem}[section]
\newtheorem{lemma}[theorem]{Lemma}
\newtheorem{corollary}[theorem]{Corollary}
\newtheorem{proposition}[theorem]{Proposition}
\newtheorem{def&dis}[theorem]{Definition and Discussion}
\theoremstyle{definition}
\newtheorem{definition}[theorem]{Definition}
\newtheorem{remark}[theorem]{Remark}
\newtheorem{examples}[theorem]{Examples}
\newtheorem{example}[theorem]{Example}
\begin{document}

\baselineskip=16pt

\title[Simple $\D$-module components of local cohomology modules]{ 
Simple $\D$-module components of local cohomology modules}
\date\today

\author[Robin Hartshorne and  Claudia Polini]
{Robin Hartshorne and Claudia Polini}

\thanks{AMS 2010 {\em Mathematics Subject Classification}.
Primary 13A30, 13H15, 14A10, 13D45; Secondary 13D02,  14E05.}

\thanks{The second author was partially supported by the NSF}

\thanks{Keywords: $\D$-modules, local cohomology, De Rham cohomology}

\address{Department of Mathematics, University of California at Berkeley,
Berkeley, CA 94720} \email{robin@math.berkeley.edu}

\address{Department of Mathematics, 
University of Notre Dame
Notre Dame, IN 46556} \email{cpolini@nd.edu}

 \begin{abstract}   For a projective variety $V \subset \mathbb P^n_k$ over a field of characteristic zero, with homogenous ideal $I$ in $A=k[x_0, \ldots, x_n]$, we consider the local cohomology modules $H^i_I(A)$. These have a structure of holonomic $\D$-module over $A$, and we investigate their filtration by simple $\D$-modules. In case $V$ is nonsingular, we can describe completely the simple $\D$-module components of $H_I^i(A)$ for all $i$, in terms of the Betti numbers of $V$. \end{abstract}
 
\maketitle


\section{Introduction}\label{intro}
\medskip

The local cohomology groups $H^i_Y(X, \mathcal F)$ of an Abelian sheaf $\mathcal F$ on a topological space $X$, with support in a closed subset $Y$, were introduced by 
Grothendieck in 1961 in his Harvard and Paris seminars \cite{GR67}, \cite{GR68}. If $X$ is the spectrum of a ring $A$, and $Y$ is the closed subset associated to an ideal $I$, 
and $\mathcal F$ is  the sheaf of sections of an $A$-module $M$, these groups are denoted $H^i_I(M)$. The latter can be computed algebraically as the derived functors of the functor $\Gamma_I(\bullet)$ that to each $A$-module $M$ associates the submodule of elements with support in $I$, that is, that are annihilated by some power of $I$. In the 50 years since their introduction, these groups and modules have found wide application in algebraic geometry and in commutative algebra. 

Even when the ring $A$ is Noetherian and the module $M$ finitely generated, the local cohomology modules $H^i_I(M)$ are rarely finitely generated. If $A$ is a local ring with maximal ideal $\mathfrak m$, then at least the modules $H^i_{\mathfrak m}(M)$ are {\it cofinite}, meaning that they satisfy the descending chain condition, or equivalently, that ${\rm Hom}_A(k, H^i_{\mathfrak m}(M))$ is a finite dimensional $k$-vector space, where $k =A/\mathfrak m$ is the residue field of $A$. Grothendieck asked whether for any ideal $I$, the modules $H^i_I(M)$ might be {\it $I$-cofinite} in the sense that ${\rm Hom}_A(A/I, H^i_{I}(M))$ is finitely generated. This turns out not to be so in general \cite{ADC}, though an analogous property does hold in the derived category. 

Another finiteness property was discovered more recently by Lyubeznik \cite{L}, who showed that if $A$ is a polynomial ring or a power series over a field $k$ of characteristic zero, then 
for any finitely generated $A$-module $M$ and any ideal $I$ in $A$, the local cohomology modules $H_I^i(M)$ are finitely generated as $\D$-modules, where $\D$ is the (non-commutative) ring of differential operators over $A$. Moreover, they are {\it holonomic} $\D$-modules, in particular they are of finite length as $\D$-modules, and have a finite composition series whose factors are simple $\D$-modules. 

Our basic questions in this paper is, if $V \subset \mathbb P_k^{n}$ is an algebraic variety, with homogeneous ideal $I$ in the polynomial ring $A=k[x_0, \ldots, x_n]$, what are the simple $\D$-module components of the local cohomology modules $H_I^i(A)$, and what can we learn about the geometry of $V$ from them?

To approach this question, we review some old work of Ogus \cite{O}, who found conditions for the vanishing of some of these groups, in terms of the algebraic de Rham cohomology groups of the subvariety $V$. Using a similar technique, we compute the de Rham cohomology groups $H^j_{DR}(M)$ for the $\D$-module $M=H^i_I(A)$, for each $i$ and $j$, in terms of the algebraic de Rham cohomology of $V$. This allows us to recover the result of Ogus on the vanishing and cofiniteness of $H^i_I(A)$ for $i >r$, where $r$ is the codimension. 

Similar results to ours have been obtained in recent years by many authors using various techniques (see Remark~\ref{R4.9}). What is new in our method is that we also obtain information about the $\D$-module structure of the critical cohomology module $H^r_I(A)$. 

To interpret this information, we prove a technical result showing that for a holonomic $\D$-module $M$ over a power series ring $R=k[\![x_1, \ldots,x_n]\!]$, the dimension of the top de Rham cohomology module $H^n_{DR}(M)$ is equal to the largest number of copies of the injective envelope $E$ of $k$ over $R$ whose direct sum $E^m$ can appear as a quotient $\D$-module of $M$. This result, while apparently dual in some way to the elementary statement that $H^0_{DR}(M)$ gives the rank of the largest free $\D$-module $R^m$ that can appear as a submodule of $M$, is not at all easy to prove. 
We use an extension of methods employed by van den Essen \cite{example, vdE, essen} to show that the de Rham cohomology modules $H^j_{DR}(M)$ of a holonomic $\D$-module are all finite dimensional.

Our main application is to show that if $V$ is a nonsingular subvariety of $\mathbb P^{n}_k$ of codimension $r$, then its nontrivial local cohomology module $H_I^r(A)$ has a simple sub-$\D$-module with support along $V$, with quotient a direct sum $E^m$ of copies of $E$, where $m$ is determined by the Betti numbers of $V$ in the sense of algebraic de Rham cohomology. In particular, if $V$ is a nonsingular curve of genus $g$, then $m=2g$, so that for a rational curve $V\subset \mathbb P^{n}_k$, the $\D$-module $H_I^{n-1}(A)$ is simple. The only other result we know giving the simple $\D$-module composition of $H^r_i(A)$ for a projective variety $V$ is the theorem of Raicu \cite{R}, which shows that for $V$ the $d$-uple embedding of another projective space in $\mathbb P^{n}_k$, the corresponding $\D$-module is simple. His result, proved by an entirely different method, is recovered by ours.

One further comment about this paper. Our methods are purely algebraic, working over an algebraically closed field $k$ of characteristic zero. Over the complex numbers $\mathbb C$, there is an extensive theory of analytic $\D$-modules using intersection cohomology and perverse sheaves, and there is a Riemann-Hilbert correspondence comparing the algebraic theory of $\D$-modules with regular singularities to the analytic theory. Readers familiar with those theories will probably see how to obtain  results analogous to ours in the analytic category, and perhaps even recover our result via the Riemann-Hilbert correspondence. Nevertheless our goal has been to present the entire argument algebraically, without reference to the analytic theory.

\bigskip

\section{The language of algebraic $\D$-modules }\label{S2}

Let $k$ be a field of characteristic zero, and let $R$ be either the polynomial ring $k[x_1, \ldots,x_n]$ or the formal power series ring $k[\![x_1, \ldots,x_n]\!]$. Let $\D$ be the ring of differential operators $R<\partial_1, \ldots, \partial_n>$, where $\partial_i$ is the partial derivative $\partial/\partial x_i$. This is a non-commutative ring with the relations $\partial_i x_i = x_i \partial_i +1$ for each $i$. An $R$-module $M$, together with a left action of $\D$ on $M$, will be called a $\D$-module. We will use the books of Bj$\ddot{\rm o}$rk \cite{B} and Hotta et al. \cite{HTT} as our basic references. 

One can define the {\it dimension} of a finitely generated $\D$-module. It is an integer between $n$ and $2n$. The $\D$-modules with minimal dimension $n$ are called {\it holonomic} $\D$-modules. They are of finite length as $\D$-modules and therefore have a filtration whose quotients are simple $\D$-modules. (For the polynomial ring case, see \cite[1.5.3]{B}, where these modules are also called modules in the {\it Bernstein class}. For the power series case, see \cite[2.7.13 and the remarks just before 3.3.1]{B}). 

Let $\Omega_{R/k}$ be the module of differentials over $R$, generated by $dx_1, \ldots, dx_n$, and let $\Omega^i_{R/k}$ be its exterior power. If $M$ is a $\D$-module, the actions of $\partial_i$ on $M$ give rise to a complex $M\otimes_R \Omega^{\bullet}$ of $R$-modules and $k$-linear maps, called the {\it de Rham complex} of $M$. Its cohomology groups will be denoted by $H^i_{DR}(M)$. If $M$ is a holonomic $\D$-module, then $H^i_{DR}(M)$ are finite-dimensional $k$-vector spaces. (In the polynomial ring case, the proof is not difficult \cite[1.6.1]{B}. In the power series case, however the question is difficult, and was left as an open problem in Bj\"{o}rk's book. It was proved later by van de Essen \cite[2.2]{essen} as a consequence of his inductive result that if $M$ is holonomic, then for a suitable choice of coordinates, $M/\partial_n M$ will also be holonomic over the power series ring in $n-1$ variables.)

Our interest in $\D$-modules comes from the following  theorem of Lyubeznik. 

\begin{theorem}\label{T2.1} If $M$ is a holonomic $\D$-module over the polynomial ring or the power series ring $R$ as above and if $I$ is an ideal of  $R$, then the local cohomology modules $H_I^i(M)$ have natural structures of holonomic $\D$-modules. 
\end{theorem}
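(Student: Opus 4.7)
The plan is to build the $\D$-module structure on $H^i_I(M)$ via the \v{C}ech complex, and then reduce holonomicity of local cohomology to holonomicity of localizations.

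First I would choose generators $f_1,\ldots,f_s$ of the ideal $I$ and write down the \v{C}ech complex $C^{\bullet}(f_1,\ldots,f_s;M)$ whose terms are finite direct sums of modules of the form $M_{f_{i_1}\cdots f_{i_p}}$. Recall that $H^i_I(M)$ is canonically isomorphic to $H^i$ of this complex. The key point is that for any $f\in R$ and any $\D$-module $N$, the localization $N_f$ inherits a natural $\D$-module structure: the action of $\partial_j$ on a fraction $n/f^k$ is given by the Leibniz rule $\partial_j(n/f^k) = \partial_j(n)/f^k - k\,n\,\partial_j(f)/f^{k+1}$. The localization maps $M \to M_f$ and the differences $M_f - M_g \to M_{fg}$ appearing in the \v{C}ech complex are $\D$-linear, so $C^{\bullet}(f_1,\ldots,f_s;M)$ is a complex of $\D$-modules with $\D$-linear differentials. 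Its cohomology therefore inherits a natural $\D$-module structure, which gives the desired structure on $H^i_I(M)$; naturality in $M$ and independence from the choice of generators follow from standard \v{C}ech formalism.

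The second step is to show these cohomology modules are holonomic. Since holonomic $\D$-modules form a full abelian subcategory closed under subquotients (by the dimension inequality for the characteristic variety in the polynomial case, resp.\ by the corresponding result in the formal case; see \cite[1.5.2]{B} and \cite[Ch.\ 2]{B}), it suffices to show that each term $M_{f_{i_1}\cdots f_{i_p}}$ of the \v{C}ech complex is holonomic whenever $M$ is. Since $M_{fg} = (M_f)_g$, I would reduce by induction on $p$ to the single statement: \emph{if $M$ is a holonomic $\D$-module and $f \in R$, then $M_f$ is holonomic}. This is the crux.

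For this last step I would invoke Bernstein's functional equation. In the polynomial case, Bernstein's theorem yields, for any $f\in R$ and any holonomic $M$ (or working first with a generating cyclic submodule and then extending), a nonzero polynomial $b(s)\in k[s]$ and a differential operator $P(s)\in \D[s]$ such that $P(s)\cdot f^{s+1} = b(s)\,f^s$ in $M[s,f^{-1}]f^s$. Using $b(s)$ one builds a good filtration on $M_f$ whose characteristic variety has the same dimension as that of $M$, giving $\dim M_f = \dim M = n$; see \cite[1.5.4]{B}. In the formal power series case one uses the corresponding Bernstein-Sato-type theorem (Kashiwara's result) as stated in \cite[Ch.\ 3]{B}. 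Combining with the previous step, every term of $C^{\bullet}(f_1,\ldots,f_s;M)$ is holonomic, hence so is each $H^i_I(M)$.

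The main obstacle will be the holonomicity of localizations $M_f$, which is the deepest input: the existence of Bernstein-Sato polynomials in the formal power series setting is a nontrivial theorem, and without it the dimension estimate on $M_f$ is not available. Once that black box is accepted, the rest of the argument is formal manipulation with the \v{C}ech complex and stability of holonomicity under cohomology.
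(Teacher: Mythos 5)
Your proposal is correct and follows essentially the same route as the paper: reduce to the \v{C}ech complex on generators of $I$, observe that its terms and differentials are $\D$-linear, use closure of the holonomic category under subquotients, and push the whole weight onto the statement that $M_f$ is holonomic whenever $M$ is (for which the paper simply cites \cite[3.4.1]{C} in the polynomial case and \cite[3.4.1]{B} in the power series case, while you sketch the Bernstein--Sato input behind those references). The only cosmetic difference is that you spell out the $\D$-module structure on localizations and the $b$-function argument that the paper treats as a black box.
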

\begin{proof} \cite[2.2]{L}. One first shows that if $M$ is holonomic over $R$ and $f \in R$, then the localized module $M_f$ is also holonomic. (For the polynomial ring case, see for instance \cite[3.4.1]{C}. For the power series case, see \cite[3.4.1]{B}. Then if $f_1, \ldots, f_s$ is a set of generators of $I$, we can compute the local cohomology modules $H_I^i(M)$ from the \v{C}ech complex formed of the localizations of $M$ at products of the $f_i$. Since kernels, images, and quotients of holonomic modules are holonomic, it follows that the $H_I^i(M)$ are holonomic. 
\end{proof}

\begin{examples}\label{E2.2} {\rm 
\begin{enumerate}
\item The ring $R$ itself is a holonomic $\D$-module and  is in fact simple. To see this, we note that any element of $R$ generates $R$ as a $\D$-module. Indeed, just differentiate enough times so that the element becomes a unit, then multiply by the other elements of $R$. 
\item Another important example is $E=H^n_{\mathfrak m}(R)$, where $\mathfrak m=(x_1, \ldots, x_n)$. This is an injective hull of $k$ over $R$, and is also a simple $\D$-module. As in (1) above, any element generates the whole module. Just multiply by enough $x_i$ to arrive at the socle $x_1^{-1}\cdots x_n^{-1}$, then differentiate to get any other monomial in $E$. 
\item The de Rham cohomology of the $\D$-module $R$ is equal to $k$ in degree $0$ and $0$ otherwise. This is a consequence of the algebraic Poincar\'e lemma \cite[II, 7.1]{ADRC}. 
\item The de Rham cohomology of the $\D$-module $E$ is  $k$ in degree $n$, and $0$ otherwise. Look first at the case $n=1$, when $E=A_x/A$ is the $k$-vector space generated by the negative powers of $x$. Consider the map $\varphi: R=k[x] \longrightarrow E$ defined by $\varphi(x^{\ell})=\ell!x^{-\ell-1}$. This is an isomorphism of $k$ vector spaces (where we use the convention that $0!=1$). Note that by construction $\partial \varphi=-\varphi x$. Thus $\varphi$ gives an isomorphism from the complex $R \xrightarrow{x} R$ to the complex $E \xrightarrow{\partial} E$. Taking the tensor product (over $k$) of this isomorphism of complexes over $k[x_i]$, we obtain an isomorphism of the Koszul complex for $R=k[x_1, \ldots, x_n]$ with respect to $x_1, \ldots, x_n$ and the Rham complex for $E$. Hence $H^n_{DR}(E)=k$, and the others are zero. For the case when $R$ is a power series ring, notice that the complex $E \otimes \Omega^{\bullet}$ is the same as in the polynomial ring case.
\item If $M$ is a holonomic $\D$-module whose support, as an $R$-module, is at the maximal ideal $\mathfrak m=(x_1, \ldots, x_n)$, then $M$ is the direct sums of a finite number of copies of $E$. This follows for example from Kashiwara's equivalence \cite[1.6.1, 1.6.4]{HTT}, or one can prove it directly as in \cite[2.4.a]{L}. 
\item For any holonomic $\D$-module $M$, its zeroth de Rham cohomology $H^0_{DR}(M)$ has dimension equal to the rank $t$ of the largest trivial sub-$\D$-module $M_0=R^t$ of $M$. Indeed, if $m\in H^0_{DR}(M)$, then the natural map $R \rightarrow M $ defined by $a \mapsto am$ is an injective $\D$-module homomorphism. One of our main results, Theorem~\ref{main}, is a non-trivial analogous statement about the last de Rham cohomology group $H^n_{DR}(M)$.
\end{enumerate}
}\end{examples}

\bigskip


\section{Algebraic de Rham cohomology and homology}\label{S3}

In this section we recall the basic definitions and properties of algebraic de Rham cohomology and homology that we will use in this paper. Our basic references will be Grothendieck \cite{GR66} and Hartshorne \cite{ADRC}. 

Let $Y$ be  a closed subscheme of a scheme $X$ smooth and of finite type of dimension $n$ over an algebraically closed field $k$ of characteristic zero. Let $\Omega^{\bullet}_{X/k}$ be the de Rham complex with $k$-linear maps

\[ \mathcal{O}_X \rightarrow \Omega_X^{1} \xrightarrow{d} \Omega^2_X \rightarrow \cdots \rightarrow \Omega^n_X\, .
\]

We define the algebraic de Rham {\it homology} of $Y$ to be

\[H^{DR}_i(Y)=\mathbb{\HH}^{2n-i}_Y(X, \Omega_X^{\bullet})\, ,\]
namely the local hyper-cohomology with support in $Y$ of the complex $\Omega_X^{\bullet}$ \cite[II.3]{ADRC}. 

We define the algebraic de Rham {\it cohomology } of $Y$ by passing to the formal completion $\mathfrak X$ of $X$ along $Y$ and taking hyper-cohomology
\[H^i_{DR}(Y)=\mathbb{\HH}^{i}(\mathfrak X, \hat{\Omega}_{\mathfrak X}^{\bullet})\, ,\]
of the formal completion of $\Omega_X^{\bullet}$ along $Y$ \cite[II.1]{ADRC}. The main properties of these groups are summarized in the following theorem:

\begin{theorem}\label{T3.1} Let $Y$ be a scheme of finite type over $k$, embeddable in a scheme $X$ smooth over $k$. 
\begin{enumerate} \item The groups $H_i^{DR}(Y)$ and $H^i_{DR}(Y)$ are independent of the embedding of $Y$ in a smooth scheme $X$. 
\item The groups $H_i^{DR}(Y)$ and $H^i_{DR}(Y)$  are finite-dimensional $k$-vector spaces.
\item The groups $H_i^{DR}(Y)$ and $H^i_{DR}(Y)$  are all zero for $i<0$ and $i>2d$, where $d={\rm dim }\, Y$.
\item If $Y$ is proper over $k$, then $H^{DR}_i(Y) \cong H^i_{DR}(Y)^{'}$, where ${}^{'}$ denotes the dual $k$-vector space. 
\item If $Y$ is smooth over $k$, then $H^{DR}_i(Y) \cong H^{2d-i}_{DR}(Y)$.
\item If $Z$ is a closed subset of $Y$, then there is a long exact sequence of homology
\[ \cdots \lto H_i^{DR}(Z) \lto H_i^{DR}(Y) \lto H_i^{DR}(Y-Z) \lto H_{i-1}^{DR}(Z)\lto \cdots
\]
\item If $k=\mathbb C$, then $H^i_{DR}(Y)\cong H^i(Y^{an}, \mathbb C)$, the usual complex cohomology of the associated complex-analytic space $Y^{an}$, and $H_i^{DR}(Y)$ calculates the Borel-Moore homology of $Y^{an}$.

\end{enumerate}
\end{theorem}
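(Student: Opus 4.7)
The plan is to treat this as a summary theorem whose individual parts are essentially standard consequences of the foundational work of Grothendieck \cite{GR66} and Hartshorne \cite{ADRC}. So the proof is largely a matter of assembling citations, with brief indications of the mechanism behind each item. I would structure the proof as a part-by-part verification, in the same order the statements are listed.

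First I would dispose of (1) by a standard two-step argument: given two closed embeddings $Y\hookrightarrow X_1$ and $Y\hookrightarrow X_2$ into smooth schemes, one compares both to the diagonal embedding $Y\hookrightarrow X_1\times X_2$ and invokes the formal Poincar\'e lemma, which says that for a smooth morphism $f\colon X'\to X$ the natural map $\hat{\Omega}^{\bullet}_{\mathfrak X}\to Rf_*\hat{\Omega}^{\bullet}_{\mathfrak X'}$ is a quasi-isomorphism on the formal completions along the preimages of $Y$ (see \cite[II.1.4]{ADRC}); the analogous statement for local hypercohomology with supports handles the homology case. For (2), finite-dimensionality, the argument proceeds by reducing via resolution of singularities and Mayer--Vietoris / excision (supplied by (6)) to the case where $Y$ is smooth and proper, where de Rham cohomology is known to be finite-dimensional by Hodge theory or directly by the degeneration of the Hodge-to-de Rham spectral sequence \cite[II.4]{ADRC}. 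Statement (3) is a dimension count: $\Omega^{\bullet}_X$ is supported in degrees $0,\dots,n$, and after localizing the analysis near $Y$ (of dimension $d$) one gets vanishing outside $[0,2d]$ after the index shift built into the definition of homology; details are in \cite[II.3]{ADRC}.

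Next I would address (4). The duality $H_i^{DR}(Y)\cong H^i_{DR}(Y)^{\vee}$ when $Y$ is proper follows from Grothendieck--Serre duality on the smooth ambient $X$ combined with the formal duality theorem that swaps completion along $Y$ with local cohomology with support in $Y$; this is exactly \cite[II.5]{ADRC}. For (5), Poincar\'e duality, when $Y$ is itself smooth of dimension $d$ one may take $X=Y$, so that $H^{DR}_i(Y)=\mathbb{H}^{2d-i}(Y,\Omega^{\bullet}_Y)=H^{2d-i}_{DR}(Y)$ directly from the definitions. Statement (6) is the long exact sequence of local hypercohomology with supports applied to the triangle $R\Gamma_Z\to R\Gamma_Y\to R\Gamma_{Y\smallsetminus Z}\to[1]$ on $X$ with coefficients in $\Omega^{\bullet}_X$, using that $H^{DR}_i(Y\smallsetminus Z)$ can be computed after replacing $X$ by the open $X\smallsetminus Z$.

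The main obstacle, and the one I would expect to need the most care, is (7), the comparison with classical complex cohomology in the case $k=\mathbb{C}$. The argument goes through Grothendieck's comparison theorem: first compare algebraic and analytic de Rham cohomology of the smooth ambient $X$ via GAGA-type reasoning for $X$ proper, and reduce the general case to the proper case by further embedding into a compactification; then use the analytic Poincar\'e lemma identifying $\hat{\Omega}^{\bullet}_{\mathfrak X^{an}}$ with the constant sheaf $\mathbb{C}_{Y^{an}}$ to get $H^i_{DR}(Y)\cong H^i(Y^{an},\mathbb{C})$, and similarly, via the local-cohomology analogue of the Poincar\'e lemma, identify $H_i^{DR}(Y)$ with Borel--Moore homology of $Y^{an}$; see \cite[IV]{ADRC}. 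The delicate point here is the handling of singular $Y$ and of the non-proper case, where compatibility of the formal/analytic comparison with the excision triangles of (6) has to be checked carefully, but all pieces are available in the references cited, so the argument is in the end a reduction and citation rather than new work. \qed
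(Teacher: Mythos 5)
Your proposal is correct and takes essentially the same approach as the paper: both treat the theorem as a summary of foundational results from Hartshorne's \emph{On the De Rham cohomology of algebraic varieties} \cite{ADRC} and dispose of it by citation (the paper points to II.1.4, II.3.2, II.6.1, II.7.2, II.5.1, II.3.4, II.3.3, and IV.1.1--1.2). You supplement the citations with sketches of the underlying mechanisms — the diagonal-embedding trick for (1), resolution of singularities for (2), the $X=Y$ observation for (5), and so on — which agree with the arguments in the cited source, so this is the same route with more narration.
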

\begin{proof} These are all in \cite{ADRC}. Item (1) is II.1.4 and II.3.2; item (2) is II.6.1; item (3) is II.7.2.; item (4) is II.5.1; item (5) is II.3.4; item (6) is II.3.3; and finally, item (7) is IV.1.1 and 1.2.
\end{proof}

\begin{example}\label{E3.2} {\rm \begin{enumerate}
\item If $Y=\mathbb A_k^n$, the affine $n$-space, then by definition $H^i_{DR}(Y)$ is just the de Rham cohomology $H^i_{DR}(R)$ of the polynomial ring $R=k[x_1, \ldots, x_n]$ as a $\D$-module, which is $k$ in degree $0$ and $0$ otherwise (see Example~\ref{E2.2} (3)). Since $Y$ is smooth over $k$ its homology $H_i^{DR}(Y)$ is $k$ in degree $2n$ and $0$ otherwise (see Theorem~\ref{T3.1} (5)).
\item  If $Y=\mathbb P^n_k$, we can show inductively that $H^i_{DR}(Y)=k$ for each $i$ even, $0\le i \le 2n$, and that $H^i_{DR}(Y)=0$ for each $i$ odd. The same is true for homology. Just start with  $\mathbb P^0_k=\mathbb A^0_k$, which has $k$ in degree $0$, and use the long exact sequence of Theorem~\ref{T3.1}(6) with $Z=\mathbb P^{n-1}_k\subset Y=\mathbb P^n_k$ and $Y-Z=\mathbb A_k^n$ to find first the homology, and then use Theorem~\ref{T3.1}(4) to obtain the cohomology of $\mathbb P^n_k$.  \end{enumerate} }
\end{example}

\begin{proposition}\label{P3.3bis}$($ Lichtenbaum theorem for algebraic de Rham cohomology$)$ \begin{enumerate}
\item Let $Y$ be a scheme of dimension $d$ over $k$. Then $H^{2d}_{DR}(Y)\not=0$ if and only if at least one irreducible component of $\, Y$ is proper over $k$. 
\item For $Y$ any scheme, $H_0^{DR}(Y)\not=0$ if and only if $Y$ has at least one connected component that is proper over $k$. 
\end{enumerate} 
\end{proposition}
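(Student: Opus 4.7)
The plan is to prove (2) first and then use it to deduce (1). For (2), I would exploit additivity of $H^{DR}_0$ over connected components to reduce to the case that $Y$ is connected. If $Y$ is connected and proper, duality (4) identifies $H^{DR}_0(Y)$ with $H^0_{DR}(Y)'$, and the latter is nonzero because it contains the class of the constant function $1$. If $Y$ is connected but not proper, I invoke Nagata's theorem and pass to the closure of $Y$ to obtain an open dense immersion $Y\hookrightarrow\bar Y$ with $\bar Y$ proper and connected, so that $Z=\bar Y\setminus Y$ is a nonempty closed subscheme. The long exact sequence (6) then reads
\[
H^{DR}_0(Z)\longrightarrow H^{DR}_0(\bar Y)\longrightarrow H^{DR}_0(Y)\longrightarrow H^{DR}_{-1}(Z)=0.
\]
Picking a closed point $p\in Z$, the composition $H^{DR}_0(\{p\})\to H^{DR}_0(Z)\to H^{DR}_0(\bar Y)$ is, via (4), dual to the restriction $H^0_{DR}(\bar Y)\to H^0_{DR}(\{p\})=k$, which sends $1\mapsto 1$ and is therefore injective (using that $H^0_{DR}(\bar Y)$ is one-dimensional for $\bar Y$ connected and proper). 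Hence the composition is surjective, so $H^{DR}_0(Z)\to H^{DR}_0(\bar Y)$ is surjective and $H^{DR}_0(Y)=0$.

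For (1), I would first reduce to $Y$ irreducible of dimension $d$. Mayer--Vietoris for algebraic de Rham cohomology together with the vanishing (3) shows that if $Y=Y_1\cup Y_2$ with $\dim(Y_1\cap Y_2)<d$, then $H^{2d}_{DR}(Y)\cong H^{2d}_{DR}(Y_1)\oplus H^{2d}_{DR}(Y_2)$; iterating splits $H^{2d}_{DR}(Y)$ as a direct sum over the top-dimensional irreducible components of $Y$, with lower-dimensional components contributing zero by (3). So it suffices to show, for $Y$ irreducible of dimension $d$, that $H^{2d}_{DR}(Y)\neq 0$ iff $Y$ is proper. If $Y$ is proper, pick a smooth dense open $U\subset Y$ with complement $Z$ of dimension $<d$; by (3), $H^{DR}_{2d}(Z)=H^{DR}_{2d-1}(Z)=0$, so (6) gives $H^{DR}_{2d}(Y)\cong H^{DR}_{2d}(U)$, and (5) applied to the smooth irreducible (hence connected) $U$ of dimension $d$ identifies this with $H^0_{DR}(U)=k$; dualizing via (4) yields $H^{2d}_{DR}(Y)\cong k\neq 0$. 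If $Y$ is not proper, resolution of singularities produces a proper birational $f\colon\tilde Y\to Y$ with $\tilde Y$ smooth irreducible, necessarily non-proper (since properness descends along proper surjections, if $\tilde Y$ were proper then $Y$ would be too). Then (5) and (2) combine to give $H^{2d}_{DR}(\tilde Y)\cong H^{DR}_0(\tilde Y)=0$, and injectivity of the pullback $f^*$ on algebraic de Rham cohomology along a proper surjection in characteristic zero forces $H^{2d}_{DR}(Y)=0$.

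The principal technical obstacles beyond Theorem~\ref{T3.1} are Mayer--Vietoris for algebraic de Rham cohomology and the injectivity of $f^*$ along a proper surjective morphism in characteristic zero. Mayer--Vietoris is a formal consequence of the corresponding triangle for hypercohomology on the union of the completed formal schemes associated to $Y_1$ and $Y_2$ and should be routine. Injectivity of $f^*$ is the more delicate step and I anticipate it being the main obstacle; it ultimately rests on cohomological descent for the de Rham complex, or equivalently on the existence of a compatible trace/pushforward on $Rf_*\hat{\Omega}^\bullet$ splitting $f^*$. With these in hand, the remainder of the argument is a clean combination of (3), (4), (5), (6), and the already-proved part (2).
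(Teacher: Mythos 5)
Your route for part (2) via Nagata compactification and duality is sound in outline (modulo a compatibility you use without comment, namely that under the duality isomorphism (4) the pushforward on $H_0^{DR}$ for a closed immersion of proper schemes is the transpose of the restriction on $H^0_{DR}$; this is a real compatibility statement, though it is available in the reference). It is, however, heavier machinery than the paper calls on: the paper's one-line proof cites Mayer--Vietoris and the exact sequence of a proper birational morphism from \cite[II.4.1, 4.2, 4.4, 4.5]{ADRC}, and never needs a compactification of $Y$.

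The genuine gap is in part (1). You correctly flag ``injectivity of $f^*$ along a proper surjection'' as the main obstacle, but you leave it as an obstacle rather than discharging it. This is not a routine step in the purely algebraic de Rham setting: it amounts to constructing a trace/pushforward splitting $f^*$ on $Rf_*\hat\Omega^\bullet$, which is precisely the kind of analytic-flavoured input the paper is trying to avoid. And the step is unnecessary: the exact sequence of a proper birational morphism, which the paper explicitly cites and uses verbatim in the proof of Proposition~\ref{P3.4} that immediately follows, already gives the isomorphism you want. Concretely, with $f\colon\tilde Y\to Y$ a resolution, $Z\subset Y$ the non-isomorphism locus, $Z'=f^{-1}(Z)$, one has (in either homology or cohomology)
\[
\cdots\longrightarrow H^{2d-1}_{DR}(Z')\longrightarrow H^{2d}_{DR}(Y)\longrightarrow H^{2d}_{DR}(Z)\oplus H^{2d}_{DR}(\tilde Y)\longrightarrow H^{2d}_{DR}(Z')\longrightarrow\cdots
\]
and since $\dim Z,\dim Z'\le d-1$, the terms involving $Z,Z'$ vanish by Theorem~\ref{T3.1}(3), yielding $H^{2d}_{DR}(Y)\cong H^{2d}_{DR}(\tilde Y)$ outright, with no trace map and no injectivity claim. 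Combined with Poincar\'e duality (5) for the smooth $\tilde Y$ and part (2), this finishes (1) exactly as you intended but without the unproved step. You should replace the appeal to $f^*$-injectivity by this four-term exact sequence; as written, the proposal leaves the crucial step of (1) unjustified.
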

\begin{proof} This is a straightforward consequence of the result of Theorem~\ref{T3.1}, together with the Mayer-Vietoris sequences and the exact sequences of a birational morphism \cite[4.1, 4.2, 4.4, 4.5]{ADRC}.
\end{proof}

\begin{proposition}\label{P3.3} Let $\mathcal C$ be a nonsingular projective curve of genus $g$ over $k$. Then the dimension of the de Rham cohomology groups are $h^0_{DR}(\mathcal C)=h^2_{DR}(\mathcal C)=1$ and $h^1_{DR}(\mathcal C)=2g$. The homology groups are the same.
\end{proposition}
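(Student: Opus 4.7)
The plan is to compute $H^*_{DR}(\mathcal{C})$ via the Hodge-to-de Rham hypercohomology spectral sequence. Since $\mathcal{C}$ is smooth, Theorem~\ref{T3.1}(1) lets us take $X=\mathcal{C}$ itself in the definition of de Rham cohomology, identifying $H^i_{DR}(\mathcal{C})$ with the hypercohomology $\mathbb{H}^i(\mathcal{C},\Omega^\bullet_{\mathcal{C}/k})$. The associated two-column spectral sequence
\[ E_1^{p,q} \,=\, H^q(\mathcal{C},\Omega^p_{\mathcal{C}/k}) \,\Longrightarrow\, H^{p+q}_{DR}(\mathcal{C}) \]
has exactly four potentially nonzero entries, of dimensions $h^0(\mathcal{O}_\mathcal{C})=1$ (since $\mathcal{C}$ is connected and proper), $h^0(\Omega^1_\mathcal{C})=g$ (by definition of the genus), and $h^1(\mathcal{O}_\mathcal{C})=g$, $h^1(\Omega^1_\mathcal{C})=1$ (by Serre duality on the curve).

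First I would read off $h^0_{DR}(\mathcal{C})$ and $h^2_{DR}(\mathcal{C})$ directly. The edge map identifies $H^0_{DR}(\mathcal{C})$ with a subspace of $H^0(\mathcal{C},\mathcal{O}_\mathcal{C})=k$ containing the class of the constant $1$, so $h^0_{DR}(\mathcal{C})=1$. For $h^2_{DR}$, since $\mathcal{C}$ is smooth and proper, Theorem~\ref{T3.1}(5) followed by Theorem~\ref{T3.1}(4) gives
\[ H^2_{DR}(\mathcal{C}) \,\cong\, H_0^{DR}(\mathcal{C}) \,\cong\, H^0_{DR}(\mathcal{C})' \,\cong\, k, \]
so $h^2_{DR}(\mathcal{C})=1$.

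To pin down $h^1_{DR}(\mathcal{C})$ I would use additivity of the Euler characteristic across the spectral sequence. The $E_1$ page is finite dimensional by Theorem~\ref{T3.1}(2), so alternating sums are preserved and
\[ \sum_i(-1)^i h^i_{DR}(\mathcal{C}) \,=\, h^0(\mathcal{O}_\mathcal{C}) - h^1(\mathcal{O}_\mathcal{C}) - h^0(\Omega^1_\mathcal{C}) + h^1(\Omega^1_\mathcal{C}) \,=\, 1 - g - g + 1 \,=\, 2-2g. \]
Combined with $h^0_{DR}(\mathcal{C})=h^2_{DR}(\mathcal{C})=1$ this forces $h^1_{DR}(\mathcal{C})=2g$. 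The homology statement is then immediate from Theorem~\ref{T3.1}(4): since $\mathcal{C}$ is proper, $H_i^{DR}(\mathcal{C})\cong H^i_{DR}(\mathcal{C})'$, giving the same dimensions.

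Given Theorem~\ref{T3.1} the computation is essentially routine, and I do not expect a real obstacle. The one point where care is taken is the use of the Euler characteristic rather than a direct read-off of $h^1$ from the $E_\infty$ page; this deliberately sidesteps Hodge-to-de Rham degeneration at $E_1$, which (while classical for smooth projective varieties in characteristic zero) we thus do not need to invoke.
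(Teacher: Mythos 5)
Your proof is correct and follows the same basic route as the paper's: the Hodge-to-de Rham hypercohomology spectral sequence $E_1^{p,q}=H^q(\mathcal C,\Omega^p_{\mathcal C})\Rightarrow H^{p+q}_{DR}(\mathcal C)$, with $h^0_{DR}=1$ read off from the $E_1$ page and $h^2_{DR}=1$ obtained from Theorem~\ref{T3.1}(4) and (5). The one genuine divergence is in computing $h^1_{DR}$: the paper's argument explicitly shows that both $d_1$-differentials vanish (the first because global sections of $\mathcal O_{\mathcal C}$ are constants, the second forced by $h^2_{DR}=h^1(\Omega^1_{\mathcal C})=1$), so the spectral sequence degenerates at $E_1$ and one reads $h^1_{DR}=h^1(\mathcal O_{\mathcal C})+h^0(\Omega^1_{\mathcal C})=2g$ from the graded pieces; your Euler characteristic argument reaches the same conclusion while deliberately avoiding any claim about degeneration, since the alternating sum $2-2g$ is preserved regardless of the differentials. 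Both are sound; your version is arguably cleaner in that it never needs to argue a differential is zero, at the modest cost of not actually recording the degeneration, which the paper's version establishes as a byproduct. (Incidentally, the paper's displayed isomorphism writes $H^1(\mathcal O_{\mathcal C})\otimes H^0(\Omega^1_{\mathcal C})$ where the direct sum $\oplus$ is clearly intended.)
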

\begin{proof} In this case, the de Rham complex is just 
 $\mathcal O_{\mathcal C} \xrightarrow{d} \Omega^1_{\mathcal C}\, .
$

There is a spectral sequence
\[ E_1^{pq}=H^q(\mathcal C, \Omega^p) \Longrightarrow E^n=H^n_{DR}(\mathcal C)\, .
\]
The first $d_1$-map is $H^0(\mathcal O_{\mathcal C}) \rightarrow H^0(\Omega_{\mathcal C}^1)$, which is zero, because the only global sections of $\mathcal O_{\mathcal C}$ are constants, and their derivative is zero. Hence $H^0_{DR}(\mathcal C)=k$. By duality, see Theorem~\ref{T3.1}(4) and (5), we see also that $H^2_{DR}(\mathcal C)=k$. Hence the other $d_1$-map $H^1(\mathcal O_{\mathcal C}) \rightarrow H^1(\Omega_{\mathcal C}^1)$ must also be zero  and so $H^1_{DR}(\mathcal C)\cong H^1(\mathcal O_{\mathcal C}) \otimes H^0(\Omega^1_{\mathcal C})$, which has dimension $2g$. The result for homology then follows from Theorem~\ref{T3.1}(5). 
\end{proof}

\begin{proposition}\label{P3.4} Let $\mathcal C$ be an integral projective curve over $k$. For each singular point $P\in \mathcal C$, let $n_P$ be the number of branches of $\mathcal C$ at $P$, that is, the number of points of the normalization $\tilde{\mathcal C}$ of $\mathcal C$ lying over $P$. Then $h^0_{DR}(\mathcal C)=h^2_{DR}(\mathcal C)=1$ and $h^1_{DR}(\mathcal C)=2g+\sum_{P\in \mathcal C} (n_P-1)$, where $g$ is the genus of the normalization  $\tilde{\mathcal C}$. The homology groups are the same. 
\end{proposition}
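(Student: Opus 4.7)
The plan is to reduce everything to the nonsingular case via the normalization $\pi\colon\tilde{\mathcal C}\to\mathcal C$. Let $S\subset\mathcal C$ denote the finite set of singular points and set $\tilde S=\pi^{-1}(S)$, a reduced zero-dimensional scheme of $|\tilde S|=\sum_{P\in S}n_P$ points; outside $S$ the morphism $\pi$ is an isomorphism, which is precisely the setup for the exact sequence of a birational morphism in \cite[IV.4]{ADRC}.

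The degrees $i=0$ and $i=2$ are immediate: since $\mathcal C$ is integral (hence connected) and proper of dimension one, Proposition~\ref{P3.3bis} together with the duality in Theorem~\ref{T3.1}(4) gives $h^0_{DR}(\mathcal C)=h^2_{DR}(\mathcal C)=1$.

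For $i=1$ I would invoke the long exact sequence of a birational morphism \cite[IV.4.4, IV.4.5]{ADRC},
\[
\cdots \lto H^i_{DR}(\mathcal C) \lto H^i_{DR}(\tilde{\mathcal C}) \oplus H^i_{DR}(S) \lto H^i_{DR}(\tilde S) \lto H^{i+1}_{DR}(\mathcal C) \lto \cdots.
\]
The de Rham cohomology of the reduced finite schemes $S$ and $\tilde S$ is concentrated in degree zero with dimensions $|S|$ and $|\tilde S|$, and the values for the nonsingular projective curve $\tilde{\mathcal C}$ are supplied by Proposition~\ref{P3.3}. Substituting these in, the sequence collapses to
\[
0 \lto k \lto k \oplus k^{|S|} \lto k^{|\tilde S|} \lto H^1_{DR}(\mathcal C) \lto k^{2g} \lto 0,
\]
and the alternating sum of dimensions gives
\[
h^1_{DR}(\mathcal C) \;=\; 2g + |\tilde S| - |S| \;=\; 2g + \sum_{P\in S}(n_P-1),
\]
which is the claimed formula (nonsingular points $P$ contribute $n_P-1=0$, so the sum may be taken over all of $\mathcal C$). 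The parallel statement for homology then follows at once from the duality $H_i^{DR}(\mathcal C)\cong H^i_{DR}(\mathcal C)^{'}$ in Theorem~\ref{T3.1}(4), valid since $\mathcal C$ is proper.

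The only nontrivial ingredient is the birational-morphism sequence itself, but that is black-boxed from \cite[IV.4]{ADRC}; once in hand, the calculation is a routine Euler characteristic count. I do not expect additional subtleties because the failure locus of $\pi$ is a finite set of points and dimensions above $1$ play no role.
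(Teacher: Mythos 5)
Your argument is correct and takes essentially the same route as the paper: both apply the exact sequence of a proper birational morphism to the normalization $\pi:\tilde{\mathcal C}\to\mathcal C$ and read off the answer by an Euler-characteristic count, the only difference being that the paper works in de Rham homology (citing \cite[II,4.5]{ADRC}) and then dualizes via Theorem~\ref{T3.1}(4), while you work in cohomology and dualize at the end. One small slip: the birational-morphism sequence is in Chapter~II of \cite{ADRC}, not Chapter~IV (which is the complex comparison theorem); if the reference states the sequence only for homology, your cohomology version follows by Theorem~\ref{T3.1}(4) since every scheme in play ($\mathcal C$, $\tilde{\mathcal C}$, $S$, $\tilde S$) is proper.
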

\begin{proof} We use the exact sequence of homology for a proper birational morphism \cite[II,4.5]{ADRC} applied to the projection $\pi:  \tilde{\mathcal C}\to \mathcal C$. Let $Z$ be the singular locus of $\mathcal C$, and let $Z^{'}$ be its inverse image in  $\tilde{\mathcal C}$. Then we have 
\[ \cdots \lto H_i^{DR}(Z^{'}) \lto H_i^{DR}(Z) \oplus H_i^{DR}( \tilde{\mathcal C} ) \lto H_i^{DR}(\mathcal C) \lto H_{i-1}^{DR}(Z^{'}) \lto \cdots
\]
Since $\tilde{\mathcal C}$ is smooth and projective, its homology (Proposition~\ref{P3.3}) has dimension $h_i^{DR}(\tilde{\mathcal C})=1,2g,1$ for $i=0,1,2$ respectively. The homology of $Z$ and $Z^{'}$ is in degree $0$ only, and is just the number of points in each. Thus 
\[h^{DR}_1(\mathcal C)=h_1^{DR}(\tilde{\mathcal C})+ \#Z^{'}-\#Z\, ,
\]
which gives the result. The same holds for the cohomology of $\mathcal C$ by Theorem~\ref{T3.1}(4).
\end{proof}

  \begin{remark}\label{R3.5}{\rm Of course  Proposition ~\ref{P3.3} and Proposition \ref{P3.4} could have been proved by using the comparison theorem (see Theorem~\ref{T3.1}(7)) and the well-known results about the cohomology of compact Riemann surfaces, but we wished to keep our exposition purely algebraic. } 
\end{remark}

\bigskip

\section{Local cohomology of a projective variety}\label{S4}

Now we come to the main subject of our investigation. Let $V$ be a closed subscheme of the projective space $\mathbb P^n_k$ over an algebraically closed field $k$ of characteristic zero. Let $V$ have codimension $r$. Let $A=k[x_0, \ldots,x_n]$ be the homogeneous coordinate ring of $\mathbb P^n_k$ and let $I$ be the homogenous ideal of $V$ in $A$. We propose to investigate the local cohomology modules $H_I^i(A)$. We keep these notations throughout this section. 
\medskip

\begin{proposition}\label{P4.1} Let $V$ be an equidimensional closed subscheme of $\mathbb P^n_k$ of codimension $r$. Let $I$ be the homogenous ideal of $\, V$ in $A=k[x_0, \ldots,x_n]$. 
\begin{enumerate}
\item $H^i_I(A)=0\, $ for $r<i<n+1$.
\item If $V$ is a set-theoretic complete intersection in $\mathbb P^n_k$, then $H^i_I(A)=0$ for $i>r$.
\item If $V$ is a local complete intersection scheme, then for all $r<i\le n+1$, $H^i_I(A)$ has support at the irrelevant maximal ideal $\m=(x_0,\ldots, x_n)$ of $A$. 
\end{enumerate}
\end{proposition}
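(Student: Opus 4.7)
I will prove the three parts in order of increasing difficulty: (2), (3), then (1). The first two reduce to straightforward \v{C}ech computations, while (1) requires more delicate cohomological input. For (2), the set-theoretic complete intersection hypothesis supplies homogeneous elements $f_1, \ldots, f_r \in A$ with $\sqrt{(f_1, \ldots, f_r)} = \sqrt I$. Since local cohomology depends only on the radical of the defining ideal, $H^i_I(A) \cong H^i_{(f_1, \ldots, f_r)}(A)$, which is computed by the \v{C}ech complex on the $r$ generators. That complex is concentrated in cohomological degrees $0, \ldots, r$, giving the desired vanishing for $i > r$.

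For (3), I localize at a prime $\p \ne \m$ of $A$. Since local cohomology commutes with localization, $H^i_I(A)_\p \cong H^i_{IA_\p}(A_\p)$. If $\p \not\supset I$ this vanishes trivially. If $\p \supset I$, then $\p$ corresponds to a non-vertex point of the affine cone, and the local complete intersection hypothesis supplies a regular sequence of length $r$ generating $IA_\p$ in the regular local ring $A_\p$; the \v{C}ech argument of (2) applied locally then gives $H^i_{IA_\p}(A_\p) = 0$ for $i > r$. Thus $H^i_I(A)$ has support contained in $\{\m\}$ for $i > r$.

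For (1), my plan is to pass through the punctured affine cone: setting $U = \Spec A \setminus V(I)$, the acyclicity of the affine scheme $\Spec A$ together with the local cohomology long exact sequence gives $H^i_I(A) \cong H^{i-1}(U, \mathcal O_U)$ for $i \geq 2$. Since $U$ is naturally a $\mathbb{G}_m$-bundle over $\mathbb P^n \setminus V$, this rewrites as $\bigoplus_m H^{i-1}(\mathbb P^n \setminus V, \mathcal O(m))$, and the assertion becomes a vanishing in the range $r \leq i-1 \leq n-1$. I would attack this via the local cohomology long exact sequence of the pair $(\mathbb P^n, V)$, combining Serre's vanishing $H^j(\mathbb P^n, \mathcal O(m)) = 0$ for $0 < j < n$ with the depth bound $H^j_V(\mathbb P^n, \mathcal O(m)) = 0$ for $j < r$ that follows from equidimensionality (since $\mathbb P^n$ is Cohen--Macaulay and $V$ has pure codimension $r$).

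The principal obstacle is step (1): equidimensionality controls $H^j_V(\mathbb P^n, \mathcal O(m))$ from below but not from above, and vanishing in the upper part of the range calls for an additional geometric input beyond bare equidimensionality --- a Hartshorne--Lichtenbaum-style connectedness property, or the cohomological dimension framework of Ogus cited in the introduction. Recognizing and deploying this extra ingredient is what makes (1) genuinely nontrivial, in contrast to the purely formal arguments for (2) and (3), where the generation-by-a-regular-sequence (globally or locally) directly truncates the \v{C}ech computation.
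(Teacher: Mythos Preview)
Your treatments of (2) and (3) are correct and match the paper's approach: for (2) the paper uses exactly the \v{C}ech-on-$r$-generators argument, and for (3) the paper simply cites Ogus, remarking that it is ``a local version of (2)'' --- which is precisely your localization argument.

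For (1), however, you have been misled by a typo in the stated inequality. The paper's own proof of (1) reads: ``Since $V$ has codimension $r$, the ideal $I$ has height $r$ and hence contains a regular sequence of length $r$ for $A$. The first part of assertion (1) now follows from the characterization of $I$-depth in terms of local cohomology and the second part holds because the dimension of the ring is $n+1$.'' This is proving $H^i_I(A)=0$ for $i<r$ (from $\depth_I A\geq r$) and for $i>n+1$ (Grothendieck vanishing), not vanishing in the range $r<i<n+1$. The subsequent use in the paper confirms this: the proof of Theorem~4.3 invokes Proposition~4.1(1) specifically for the vanishing $H^i_I(A)=0$ when $i<r$. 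So the intended statement of (1) is the elementary two-sided vanishing $i<r$ or $i>n+1$, and its proof is a one-liner.

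Your instinct that the literal statement ``$H^i_I(A)=0$ for $r<i<n+1$'' cannot follow from equidimensionality alone is entirely correct --- indeed it is false in general (two skew lines in $\mathbb P^3$ give $r=2$, $n=3$, and $H^3_I(A)\neq 0$). So the ``principal obstacle'' you identified is not a gap in your argument but a signal that the target was mis-stated; there is no Hartshorne--Lichtenbaum or Ogus input needed for the proposition as the authors actually prove and use it.
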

\begin{proof} Since $V$ has codimension $r$, the ideal $I$ has height $r$ and hence contains a regular sequence of length $r$ for $A$. The first part of assertion (1) now follows from the characterization of $I$-depth in terms of local cohomology and the second part holds because the dimension of the ring is $n+1$. 
For (2), notice that in this case there is an ideal $(f_1, \ldots, f_r)$ generated by $r$ elements having the same radical as $I$, so computing local cohomology using the \v{C}ech complex we obtain $H^i_I(A)=0\, $ for $i>r$.
Assertion (3) is a result of Ogus \cite[4.1, 4.3]{O} proved using a local version of (2). 
\end{proof}

Next we will make use of the $\D$-module structure on the local cohomology modules $H^i_i(A)$.

\begin{proposition}\label{P4.2} Let $X=\mathbb A^{n+1}$ be ${\rm Spec} \, A$, let $I$ be an ideal of $A$, and let $Y={\rm Spec}\, (A/I)$.  Then there is a spectral sequence
\[E_2^{pq}=H^p_{DR}(H_I^q(A))\Longrightarrow H^{DR}_{2n+2-p-q}(Y)
\]
relating the de Rham cohomology of the $\D$-modules $H^i_I(A)$ to the algebraic de Rham homology of the scheme $Y$. 
\end{proposition}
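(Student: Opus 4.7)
\smallskip
\noindent\textbf{Proof proposal.}
The plan is to produce the spectral sequence as one of the two hypercohomology spectral sequences attached to the local hypercohomology of the de~Rham complex $\Omega^{\bullet}_{X/k}$ with support in $Y$. Because $X=\mathbb A^{n+1}$ is smooth of dimension $n+1$, the definition recalled in Section~\ref{S3} gives
\[
H^{DR}_{i}(Y)=\mathbb H^{\,2(n+1)-i}_{Y}(X,\Omega^{\bullet}_{X/k}),
\]
so it suffices to exhibit a spectral sequence with abutment $\mathbb H^{p+q}_{Y}(X,\Omega^{\bullet}_{X/k})$ and $E_2$ page $H^p_{DR}(H^q_I(A))$.

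First I would choose a Cartan--Eilenberg resolution $\Omega^{\bullet}_{X/k}\to\mathcal I^{\bullet,\bullet}$ in which each row $\mathcal I^{p,\bullet}$ is an injective resolution of $\Omega^p_{X/k}$, and then form the double complex $\Gamma_{Y}(\mathcal I^{p,q})$. Its total cohomology is, by definition of hypercohomology with support, $\mathbb H^{\bullet}_{Y}(X,\Omega^{\bullet}_{X/k})$. This double complex has two associated spectral sequences; I take the one obtained by first running the differential in the resolution direction $q$. Then
\[
E_1^{pq}=R^{q}\Gamma_{Y}(\Omega^p_{X/k}).
\]
Since $X$ is affine, $R\Gamma_Y$ on quasi-coherent sheaves is computed by $R\Gamma_I$ on the corresponding $A$-modules, and $\Omega^{p}_{X/k}$ corresponds to the free $A$-module $\Omega^p_{A/k}=\bigwedge^p(A\,dx_0\oplus\cdots\oplus A\,dx_n)$. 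Therefore
\[
E_1^{pq}=H^{q}_{I}(A)\otimes_{A}\Omega^p_{A/k},
\]
with horizontal differential induced by the exterior derivative $d$.

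Next I would take $E_2$. The complex $E_1^{\bullet,q}$ is visibly the de~Rham complex of the $A$-module $H^q_I(A)$, provided the horizontal differential agrees with the $\D$-module de~Rham differential. This is the one genuine check in the argument, and it is the step I expect to be the main obstacle: one must verify that the boundary map induced on local cohomology by the exterior derivative coincides with the de~Rham differential attached to the natural $\D$-module structure on $H^q_I(A)$ produced by Theorem~\ref{T2.1}. This is a functoriality/naturality assertion, and it reduces to comparing the two compatible actions of $\partial_i$ on a \v Cech (or injective) resolution: both are given by term-wise differentiation with respect to the $x_i$, so they commute with the \v Cech differentials and descend to the same operator on $H^q_I(A)$. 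Once this identification is in place,
\[
E_2^{pq}=H^{p}\bigl(H^{q}_{I}(A)\otimes_A\Omega^{\bullet}_{A/k}\bigr)=H^{p}_{DR}\bigl(H^{q}_I(A)\bigr).
\]

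Finally, combining the abutment $\mathbb H^{p+q}_Y(X,\Omega^{\bullet}_{X/k})=H^{DR}_{2n+2-p-q}(Y)$ with the $E_2$-term computed above yields the spectral sequence in the statement. The remaining assertions about convergence are automatic because $\Omega^{\bullet}_{X/k}$ is a bounded complex and each row of the Cartan--Eilenberg resolution is concentrated in non-negative degrees, so both filtrations on the total complex are finite in each degree and the spectral sequence converges strongly.
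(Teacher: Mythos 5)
Your proposal is correct and follows essentially the same route as the paper: both derive the result from the hypercohomology spectral sequence of $\Omega^{\bullet}_{X/k}$ with support in $Y$, identifying the $E_1$-page with $H^q_I(A)\otimes_A\Omega^p_{A/k}$ because $X$ is affine and each $\Omega^p_X$ is free. Your additional care in verifying that the $d_1$-differential agrees with the $\D$-module de~Rham differential on $H^q_I(A)$ is a point the paper leaves implicit, and it is indeed the only genuine check in the argument.
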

\begin{proof} We compute the algebraic de Rham homology of $Y$ using its embedding in $X$, so that by definition (see Section~\ref{S3})
\[H_i^{DR}(Y)=\mathbb{\HH}^{2n+2-i}_Y(X, \Omega_X^{\bullet})\, .\]
(Note the shift by $2n+2$ since $X$ has dimension $n+1$). Then we use the spectral sequence of local hyper-cohomology of $\Omega_X^{\bullet}$, which is
\[E_1^{pq}=H^q_Y(X, \Omega^p_X)\Longrightarrow E^{p+q}=\mathbb{\HH}^{p+q}_Y(X, \Omega_X^{\bullet})\, .\]
Since $\Omega^p_X$ is a free $\mathcal  O_X$-module, for each $q$ we can write 
\[E_1^{pq}=H^q_I(A)\otimes \Omega^p_X\, .\]
Thus for $q$ fixed, the row $E_1^{pq}$ with differentials $d_1^{pq}$ becomes the de Rham complex of the $\D$-module $H_I^q(A)$, and its homology the $E_2^{pq}$ terms, become the de Rham cohomology $H^p_{DR}(H^q_I(A))$. Thus the spectral sequence of the proposition is the same spectral sequence, but starting with the $E_2$ page. 
\end{proof}

And now, we will see that when $V$ is a local complete intersection, the spectral sequence of Proposition~\ref{P4.2} degenerates. 
\begin{theorem}\label{T4.3} Let $V$ be a local complete intersection in $\mathbb P^n_k$,   equidimensional of  codimension $r$, and let $Y\subset X =\mathbb A^{n+1}$ be defined by the homogenous ideal $I$ of $\, V$. Then
\begin{enumerate}
\item $H^j_{DR}(H^r_I(A))=H^{DR}_{2n+2-j-r}(Y)$ for $0\le j\le n+1$, and 
\item  $H^{n+1}_{DR}(H^i_I(A))=H^{DR}_{n+1-i}(Y)$ for $r\le i\le n+1$.
\end{enumerate}
All other values of  $H^j_{DR}(H^i_I(A))$ are zero, as are all other values of $H^{DR}_{i}(Y)$.
\end{theorem}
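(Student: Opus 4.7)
The plan is to feed the local complete intersection hypothesis into the spectral sequence
\[
E_2^{pq} = H^p_{DR}(H^q_I(A)) \;\Longrightarrow\; H^{DR}_{2n+2-p-q}(Y)
\]
of Proposition~\ref{P4.2} and show that it collapses at the $E_2$ page. The first step is to pin down the shape of $E_2$ using Proposition~\ref{P4.1} together with Examples~\ref{E2.2}(4)--(5). Proposition~\ref{P4.1}(1) gives $H^q_I(A) = 0$ for $q \notin [r, n+1]$, killing all columns outside that range. For $r < q \le n+1$, Proposition~\ref{P4.1}(3) says $H^q_I(A)$ is supported at the irrelevant maximal ideal $\m$, so Example~\ref{E2.2}(5) identifies it with a finite direct sum of copies of $E$, and Example~\ref{E2.2}(4) then tells us that its de Rham cohomology is concentrated in degree $n+1$. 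Thus the only $E_2$ entries that can be non-zero lie on the ``hook''
\[
\{(p,r) : 0 \le p \le n+1\} \cup \{(n+1, q) : r < q \le n+1\},
\]
the two branches meeting at the corner $(n+1, r)$.

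Next I would verify the degeneration. The differential $d_s^{pq} : E_s^{pq} \to E_s^{p+s, q-s+1}$ with $s \ge 2$ whose source lies on the vertical branch has target in column $r - s + 1 \le r - 1$, where $H^{r-s+1}_I(A) = 0$; and if the source lies on the horizontal branch, the target is in row $p + s \ge n + 3$, outside the range $[0, n+1]$ of de Rham degrees on the $(n+1)$-dimensional ambient scheme. A parallel check rules out non-zero incoming $d_s$: a source $(p-s, q+s-1)$ mapping into the vertical branch $(p,r)$ would have to sit in row $n+1$, forcing $p > n+1$; and an incoming differential to the horizontal branch would similarly require $s = 0$. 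Hence $E_\infty = E_2$.

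Finally, the hook has at most one entry on each anti-diagonal $p + q = \mathrm{const}$, so the filtration on each $H^{DR}_N(Y)$ has a single non-zero graded piece and no extension issues arise (everything in sight is a $k$-vector space). Reading off the $E_\infty$ page: the vertical branch $(j,r)$ with $0 \le j \le n+1$ yields assertion (1), the horizontal branch $(n+1, i)$ with $r \le i \le n+1$ yields assertion (2), and the vanishing of $E_2$ off the hook gives the remaining $H^j_{DR}(H^i_I(A)) = 0$ as well as the vanishing of $H^{DR}_N(Y)$ for total degrees $N$ not hit by the hook. The main hurdle is purely combinatorial --- verifying that the hook configuration is preserved by every higher differential --- which poses no serious difficulty once the $E_2$ picture is drawn.
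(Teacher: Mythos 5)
Your proof is correct and follows essentially the same approach as the paper: both use Proposition~\ref{P4.1} and Examples~\ref{E2.2}(4)--(5) to establish the L-shaped $E_2$ page of the spectral sequence from Proposition~\ref{P4.2}, then observe that the differentials must vanish for positional reasons. You are somewhat more careful than the paper's proof, which only mentions $d_2$, in explicitly checking all $d_s$ for $s\ge 2$ and in noting that each anti-diagonal meets the hook in at most one spot so that no extension problem arises.
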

\begin{proof} First of all, $H^i_I(A)=0$ for $i<r$ by Proposition~\ref{P4.1}(1). Next, for $i>r$ we know that $H^i_I(A)$ has support at the maximal ideal by Proposition~\ref{P4.1}(3), and hence is isomorphic to a direct sum of copies of $E$ by Example~\ref{E2.2}(5). In that case, $H^j_{DR}(H^i_I(A))=0$ except for $j=n+1$, by Example~\ref{E2.2}(4). Thus the only possible non-zero initial terms of the spectral sequence of Proposition~\ref{P4.2} are for $q=r$ and  $0\le p\le n+1$, or for $p=n+1$ and any $r \le q\le n+1$. This gives a curios $L$-shaped spectral sequence. 
\bigskip

\begin{tikzpicture}         \coordinate (Origin)   at (0,0);
    \coordinate (XAxisMin) at (0,0);
    \coordinate (XAxisMax) at (5,0);
    \coordinate (YAxisMin) at (0,0);
    \coordinate (YAxisMax) at (0,5);
 \draw [thin,-latex] (XAxisMin) -- (XAxisMax);
    \draw [thin, -latex] (YAxisMin) -- (YAxisMax);
    \draw [thick] (4,0) -- (4,5);
     \draw [thin] (0,4) -- (5,4);
       \draw [thick] (0,2) -- (5,2);
\node[label=above:$q$] at (-.5,4.5)  {};;
\node[label=above:$n+1$] at (-.5,3.5)  {};
\node[label=above:$r$] at (-.5,1.6)  {};
\node[label=above:$p$] at (4.9,-.8)  {};
\node[label=above:$n+1$] at (4,-.8)  {};
\node[label=above:$\bullet$] at (0,1.65)  {};
\node[label=above:$\bullet$] at (0.5,1.65)  {};
\node[label=above:$\bullet$] at (1,1.65)  {};
\node[label=above:$\bullet$] at (1.5,1.65)  {};
\node[label=above:$\bullet$] at (2,1.65) {};
\node[label=above:$\bullet$] at (2.5,1.65)  {};
\node[label=above:$\bullet$] at (3,1.65) {};
\node[label=above:$\bullet$] at (3.5,1.65)  {};
\node[label=above:$\bullet$] at (4,1.65) {};
\node[label=above:$\bullet$] at (4,2.15) {};
\node[label=above:$\bullet$] at (4,2.65)  {};
\node[label=above:$\bullet$] at (4,3.15)  {};
\node[label=above:$\bullet$] at (4,3.65)  {};
\end{tikzpicture}

\bigskip
There are no non-trivial $d_2$ maps so the spectral sequence degenerates and gives the isomorphism of the theorem.  
\end{proof}

\begin{corollary}\label{C4.4} Let $V, \, I, \, Y$ be as in Theorem~\ref{T4.3}. Then $H^j_{DR}(H^r_I(A))=0$ for $j <r$.
\end{corollary}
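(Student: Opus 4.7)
The plan is to combine the isomorphism of Theorem~\ref{T4.3}(1) with the vanishing statement of Theorem~\ref{T3.1}(3). Concretely, Theorem~\ref{T4.3}(1) gives
\[
H^j_{DR}(H^r_I(A)) \;\cong\; H^{DR}_{2n+2-j-r}(Y)
\]
for $0\le j\le n+1$, so the task reduces to showing that the right-hand side vanishes whenever $j<r$.

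Next I would compute the dimension of $Y$. Since $V\subset\mathbb P^n_k$ has codimension $r$, it has dimension $n-r$, and $Y=\Spec(A/I)$ is its affine cone, hence $d:=\dim Y = n-r+1$. Thus $2d = 2n+2-2r$. By Theorem~\ref{T3.1}(3), $H^{DR}_i(Y)=0$ for all $i>2d$.

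Finally, for $j<r$ we have the strict inequality $2n+2-j-r > 2n+2-r-r = 2d$, so $H^{DR}_{2n+2-j-r}(Y)=0$, and hence $H^j_{DR}(H^r_I(A))=0$ as claimed.

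There is no real obstacle here: the corollary is essentially a book-keeping consequence of the degeneration already established in Theorem~\ref{T4.3}, together with the elementary fact that de Rham homology vanishes above twice the dimension. The only small thing to check carefully is the dimension of the affine cone $Y$ and the index-shift between cohomological degree $j$ on the $\D$-module side and homological degree $2n+2-j-r$ on the scheme side.
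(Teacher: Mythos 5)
Your proposal is correct and takes exactly the same route as the paper's one-line proof: identify $\dim Y = n-r+1$, note $2\dim Y = 2n-2r+2$, and apply the vanishing $H^{DR}_i(Y)=0$ for $i>2\dim Y$ from Theorem~\ref{T3.1}(3) together with the isomorphism of Theorem~\ref{T4.3}(1). Your write-up just spells out the arithmetic that the paper leaves implicit.
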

\begin{proof} This is because $Y$, being a scheme of dimension $n-r+1$, has no homology in degrees $> 2n-2r+2$, by Theorem~\ref{T3.1}(3). 
\end{proof}

\begin{corollary}\label{C4.5} Let  $V, \, I, \, Y$ be as in Theorem~\ref{T4.3}. For $i>r$ the $\D$-module $H^i_I(A)$ is isomorphic to $E^{m_i}$ where $m_i={\rm dim}_k\,  H^{DR}_{n+1-i}(Y)$.  \end{corollary}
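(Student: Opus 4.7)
The plan is to assemble the corollary directly from the structural facts collected in the preceding sections; no new calculation is required, only bookkeeping of indices.

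First, I would pin down the $R$-module structure of $H^i_I(A)$ for $i>r$. By Proposition~\ref{P4.1}(3), the local complete intersection hypothesis on $V$ forces $H^i_I(A)$ to have support at the irrelevant maximal ideal $\mathfrak m=(x_0,\ldots,x_n)$ of $A$ for all $r<i\le n+1$. Combined with Theorem~\ref{T2.1}, which says $H^i_I(A)$ carries the structure of a holonomic $\D$-module, Example~\ref{E2.2}(5) then forces $H^i_I(A)\cong E^{m_i}$ as $\D$-modules for a unique non-negative integer $m_i$, where $E=H^{n+1}_{\mathfrak m}(A)$ is the injective hull of $k$ over $A$. This reduces the corollary to computing the multiplicity $m_i$.

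Next, I would extract $m_i$ by applying top de Rham cohomology. By Example~\ref{E2.2}(4), $H^j_{DR}(E)=k$ if $j=n+1$ and is zero otherwise, so $H^{n+1}_{DR}(E^{m_i})\cong k^{m_i}$ and
\[
m_i=\dim_k H^{n+1}_{DR}(H^i_I(A)).
\]
Finally, Theorem~\ref{T4.3}(2) identifies this with the algebraic de Rham homology of $Y$: for $r\le i\le n+1$,
\[
H^{n+1}_{DR}(H^i_I(A))\cong H^{DR}_{n+1-i}(Y),
\]
and taking dimensions yields $m_i=\dim_k H^{DR}_{n+1-i}(Y)$, as claimed.

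There is no real obstacle here, since the hard work has already been carried out. The only point deserving care is verifying that the index range $r<i\le n+1$ in the corollary matches the range where both Theorem~\ref{T4.3}(2) applies and the conclusion of Proposition~\ref{P4.1}(3) is available; outside this range $H^i_I(A)$ vanishes by Proposition~\ref{P4.1}(1), so the formula $m_i=\dim_k H^{DR}_{n+1-i}(Y)$ is consistent with the vanishing of those homology groups predicted by Theorem~\ref{T3.1}(3) applied to the $(n-r+1)$-dimensional scheme $Y$.
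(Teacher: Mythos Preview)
Your proof is correct and follows essentially the same approach as the paper's: the paper simply refers back to the proof of Theorem~\ref{T4.3} for the isomorphism $H^i_I(A)\cong E^{m_i}$ (which there uses exactly Proposition~\ref{P4.1}(3) and Example~\ref{E2.2}(5)), and then reads off $m_i$ from $H^{n+1}_{DR}$ via Example~\ref{E2.2}(4) and Theorem~\ref{T4.3}(2). You have spelled out the same ingredients explicitly rather than citing them by reference.
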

\begin{proof} We have seen in the proof of Theorem~\ref{T4.3} that for $i>r$  we have $H^i_I(A)\cong E^{m_i}$ for some $m_i$. Since $H^j_{DR}(E)=0$ for $j\not=n+1$ and $k$ for $j=n+1$, this number $m_i$ is the dimension of $H^{n+1}_{DR}(H^i_I(A))$, which is equal to the dimension of $H^{DR}_{n+1-i}(Y)$, according to  the theorem. 
\end{proof}

Our next task is to relate the algebraic de Rham homology of $Y$ to that of $V$. 

\begin{proposition}\label{P4.6} With the hypotheses of Theorem~\ref{T4.3}, assume furthermore that $V$ is connected,  of dimension $d \ge 1$. Then 
\begin{enumerate}
\item $H_0^{DR}(Y)=H_1^{DR}(Y)=0$,
\item $H_2^{DR}(Y)\cong H_1^{DR}(V)$,
\item $H_{2d+2}^{DR}(Y)\cong H_{2d}^{DR}(V)$, and 
\item for $3\le i\le 2d+1$ the homology of $Y$ is determined by the exact sequence
\[0=H_{2d+1}^{DR}(V)\xrightarrow{h}H_{2d-1}^{DR}(V)\to H_{2d+1}^{DR}(Y) \to H_{2d}^{DR}(V)\to \ldots
\to H_{3}^{DR}(Y) \to H_2^{DR}(V)\xrightarrow{h}H_{0}^{DR}(V)\to 0
\]
where $h$ denotes cap-product with the hyperplane class.
\end{enumerate}
\end{proposition}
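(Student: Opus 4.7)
The starting point is to view $Y$ as the affine cone over $V$ and to compute its de Rham homology by removing the vertex. Set $U=Y\setminus\{0\}$ where $0$ is the cone vertex; equivalently, if $L$ denotes the total space of $\mathcal O_V(-1)$ and $V\subset L$ is the zero section, then $U=L\setminus V$ via the standard contraction $L\to Y$. The plan is to compute $H_*^{DR}(U)$ from the pair $V\subset L$ by a Gysin-type argument, and then transfer the information to $Y$ via the pair $\{0\}\subset Y$.

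Applying Theorem~\ref{T3.1}(6) to $\{0\}\subset Y$, and using that $H_i^{DR}(\{0\})=k$ for $i=0$ and vanishes otherwise, one obtains $H_i^{DR}(Y)\cong H_i^{DR}(U)$ for every $i\ge 2$, together with a four-term sequence in low degrees. Since $V$ is connected of dimension $d\ge 1$, the cone $Y$ is connected of dimension $\ge 2$ and has no proper connected component, so by Proposition~\ref{P3.3bis}(2) one has $H_0^{DR}(Y)=0$, and the four-term sequence collapses to $0\to H_1^{DR}(Y)\to H_1^{DR}(U)\to k\to 0$.

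Next apply Theorem~\ref{T3.1}(6) to $V\subset L$, then invoke the Thom isomorphism $H_i^{DR}(L)\cong H_{i-2}^{DR}(V)$ (valid because $L\to V$ is a rank-one vector bundle) and identify the composite $H_i^{DR}(V)\to H_i^{DR}(L)\cong H_{i-2}^{DR}(V)$ with cap product by $c_1(L)=-h$ up to sign. This produces the Gysin long exact sequence
\[
\cdots \lto H_i^{DR}(V)\xrightarrow{h} H_{i-2}^{DR}(V)\lto H_i^{DR}(U)\lto H_{i-1}^{DR}(V)\xrightarrow{h} H_{i-3}^{DR}(V)\lto \cdots .
\]
All four assertions then drop out by degree bookkeeping, using $H_i^{DR}(V)=0$ outside $0\le i\le 2d$ (Theorem~\ref{T3.1}(3)) and $H_0^{DR}(V)=k$ (Proposition~\ref{P3.3bis}(2)). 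Part~(3) is the segment $0\to H_{2d}^{DR}(V)\to H_{2d+2}^{DR}(U)\to 0$. The remaining assertion $H_1^{DR}(Y)=0$ in (1) follows by combining the previous paragraph with the Gysin segment $0\to H_1^{DR}(U)\to H_0^{DR}(V)\to 0$. Part~(4) is the splice of the Gysin sequence across $3\le i\le 2d+1$, combined with $H_i^{DR}(Y)=H_i^{DR}(U)$ and the vanishings at both ends. Part~(2) comes from $H_2^{DR}(V)\xrightarrow{h} H_0^{DR}(V)\to H_2^{DR}(U)\to H_1^{DR}(V)\to 0$, giving $H_2^{DR}(U)\cong H_1^{DR}(V)$ once $\cap h$ is surjective there.

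Two nontrivial inputs carry the weight. The conceptual heart of the proof is setting up the Gysin sequence within the algebraic de Rham framework of Section~\ref{S3}: one must realize the Thom isomorphism as a shift of the homotopy-invariance isomorphism $H^*_{DR}(V)\cong H^*_{DR}(L)$, and verify that the resulting connecting map is indeed cap product with the first Chern class of $\mathcal O_V(-1)$, i.e.\ $-h$; both steps require care to justify algebraically rather than by appealing to the analytic theory. The secondary input is the Lefschetz-type surjectivity of $\cap h\colon H_2^{DR}(V)\to H_0^{DR}(V)\cong k$, needed to close off parts~(2) and (4): the projection-formula identity $h^d\cap [V]=(\deg V)\cdot [\mathrm{pt}]\ne 0$ in $H_0^{DR}(V)$ exhibits $h^{d-1}\cap [V]\in H_2^{DR}(V)$ as a class with nonzero image under $\cap h$, which suffices. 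Everything else is arrangement.
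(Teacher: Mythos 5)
Your proposal is correct and follows essentially the same route as the paper's proof: excise the cone vertex $P$ using the long exact sequence of Theorem~\ref{T3.1}(6), identify $Y\setminus P$ with the punctured total space of $\mathcal O_V(-1)$, apply the Thom--Gysin sequence to produce the cap-product-by-$h$ maps, and close off the low degrees with the surjectivity of $\cap h\colon H_2^{DR}(V)\to H_0^{DR}(V)$ via a degree argument. The only cosmetic differences are that the paper cites the Thom--Gysin sequence directly from \cite[II,7.9.3]{ADRC} rather than re-deriving it from the Thom isomorphism, and proves surjectivity of $\cap h$ by dualizing to the cup product $H^0_{DR}(V)\to H^2_{DR}(V)$ and using the nonvanishing of $h^d$ rather than by the projection formula; these are equivalent.
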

\begin{proof} We use a method of proof similar to \cite[II,3.2]{ADRC} but with homology instead of cohomology. The first step is to compare the homology of $Y$ to that of $Y-P$, where $P$ corresponds to $\m=(x_0,\ldots, x_n)$ in $A$. Since $P$ only has homology in degree zero, the exact sequence of Theorem~\ref{T3.1}(6) gives an exact sequence
\[0\to H_1^{DR}(Y)\to H_1^{DR}(Y-P)\to H_0^{DR}(P)\to H_0^{DR}(Y)\to H_0^{DR}(Y-P)\to 0\, ,
\]
and isomorphisms
\[H_i^{DR}(Y)\cong H_i^{DR}(Y-P) \mbox{ \quad for \ } i\ge 2
\]
Next we note that $Y-P$ is isomorphic to the geometric vector bundle $\mathbb V(\mathcal O_V(-1))$ minus its zero section, so we can apply the Thom-Gysin sequence \cite[II,7.9.3]{ADRC} to obtain a long exact sequence 
\[\ldots \to  H_i^{DR}(V)\xrightarrow{h} H_{i-2}^{DR}(V)\to H_i^{DR}(Y-P)\to H_{i-1}^{DR}(V) \to \ldots
\]
where $h$ is the cap-product in homology \cite[II, 7.4]{ADRC}. From the last terms of this sequence it follows that $H_0^{DR}(Y-P)=0$ and $H_1^{DR}(Y-P)\cong H_0^{DR}(V)=k$, since $V$ is connected (see also Proposition~\ref{P3.3bis}). Since $H_0^{DR}(P)=k$ and $H_0^{DR}(Y)=0$ by Proposition~\ref{P3.3bis}, the earlier sequence now implies that $H_1^{DR}(Y)=0$. Since $V$ has dimension $\ge 1$, it follows that the cap product $h:H_2^{DR}(V)\to H_0^{DR}(V)$ is surjective, and so $H_2^{DR}(Y)\cong H_2^{DR}(Y-P)\cong H_1^{DR}(V)$. (To see that $h$ is surjective, note that it is dual to the cup-product $H^0_{DR}(V)\to H^2_{DR}(V)$, and the image of the generator of $H^0_{DR}(V)$ is the hyperplane class in $H^2_{DR}(V)$, which, having self-intersection equal to the degree of $V$ must be non-zero.) Now using $H_i^{DR}(Y)\cong H_i^{DR}(Y-P)$ for $i \ge 3$ gives the desired assertions (3) and (4). 
\end{proof}

\begin{corollary}\label{C4.7}$[$Ogus, \cite[4.4]{O}$]$ Let $V$ be a local complete intersection in $\mathbb P^n_k$,  equidimensional of codimension $r$, connected, of dimension $d \ge 1$, with homogenous ideal $I$ in $A$. Then the groups $H^i_I(A)$ are zero for all $i>r$ if and only if the restriction maps
\[H^j_{DR}(\mathbb P^n) \longrightarrow H^j_{DR}(V)
\]
are isomorphisms for all $j<n-r$. 
\end{corollary}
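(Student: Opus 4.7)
My plan is to reduce the vanishing of $H^i_I(A)$ for $i>r$ to a vanishing statement in the de Rham homology of the affine cone $Y$ and then to extract the cohomology conditions on $V$ itself. By Corollary~\ref{C4.5}, for $i>r$ the module $H^i_I(A)$ is isomorphic to $E^{m_i}$ with $m_i=\dim_k H^{DR}_{n+1-i}(Y)$, so the vanishing of all $H^i_I(A)$ for $i>r$ is equivalent to $H^{DR}_j(Y)=0$ for $0 \le j \le d$, where $d=n-r=\dim V$. Proposition~\ref{P4.6}(1) handles $j=0,1$ automatically, so I am left to characterize when $H^{DR}_j(Y)=0$ for $2 \le j \le d$.

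Next I would feed this requirement into Proposition~\ref{P4.6}(2),(4). Part (2) forces $H^{DR}_1(V)=0$. For each $3 \le j \le d$, part (4) shows that $H^{DR}_j(Y)=0$ is equivalent to the pair of conditions that cap with the hyperplane class $h \colon H^{DR}_j(V) \to H^{DR}_{j-2}(V)$ be surjective and $h \colon H^{DR}_{j-1}(V) \to H^{DR}_{j-3}(V)$ be injective. Collecting the conditions over all $j \in \{3, \ldots, d\}$ produces the list: $H^{DR}_1(V)=0$; $h$ is an isomorphism on $H^{DR}_j(V)$ for $3 \le j \le d-1$; $h$ is surjective on $H^{DR}_d(V)$; and $h$ is injective on $H^{DR}_2(V)$ (automatically an isomorphism in view of the surjectivity established in the proof of Proposition~\ref{P4.6}).

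Finally, I would translate these conditions into the restriction statement. By Theorem~\ref{T3.1}(4) applied to both $\mathbb P^n$ and $V$ (each proper), the restriction $H^j_{DR}(\mathbb P^n) \to H^j_{DR}(V)$ is an isomorphism if and only if its dual, the pushforward $H^{DR}_j(V) \to H^{DR}_j(\mathbb P^n)$, is. By Example~\ref{E3.2}(2), $H^{DR}_j(\mathbb P^n)=k$ for $j$ even and $0$ for $j$ odd, with cap by $h$ acting as an isomorphism between consecutive nonzero groups. Since pushforward commutes with cap by $h$ (the hyperplane class on $V$ being pulled back from $\mathbb P^n$), an induction on $j$ starting from $H^{DR}_0(V)=k$ shows that the cap-product pattern from the previous paragraph is equivalent to the pushforward being an isomorphism for each $j<d$.

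The main obstacle I expect is the careful parity and boundary bookkeeping, in particular at $j=d$, where the exact sequence only yields surjectivity of $h \colon H^{DR}_d(V) \to H^{DR}_{d-2}(V)$; for the reverse direction one must verify this condition from the restriction data. When $d$ is odd it is automatic since $H^{DR}_{d-2}(V)=0$, and when $d$ is even it reduces to nonvanishing of $h^{d/2}$ in $H^d_{DR}(V)$, which holds because $h^{d/2} \cup h^{d/2} = h^d \ne 0$ in $H^{2d}_{DR}(V)$ by Lichtenbaum's theorem (Proposition~\ref{P3.3bis}).
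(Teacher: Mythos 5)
Your argument is correct and is essentially the paper's own proof: both reduce to $H^{DR}_j(Y)=0$ for $j\le d$ via Theorem~\ref{T4.3}/Corollary~\ref{C4.5}, translate this through Proposition~\ref{P4.6} into conditions on cap with $h$, dualize by Theorem~\ref{T3.1}(4), and close the argument by an induction against the known cohomology of $\mathbb P^n$. The only cosmetic difference is that you dualize the restriction map to the homology pushforward and stay on the homology side, while the paper dualizes the cap product to the cup product and finishes on the cohomology side; the boundary bookkeeping at $j=d$ (Lichtenbaum/degree argument for $h^{d/2}\ne 0$) that you spell out is left implicit in the paper.
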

\begin{proof} By Theorem~\ref{T4.3} it follows that the $H^i_I(A)=0$ for all $i>r$ is equivalent to $H^{DR}_j(Y)=0$ for all $j<n+1-r$. This in turn, by Proposition~\ref{P4.6}, is equivalent to saying that the cap-product $h:H_j^{DR}(V)\to H_{j-2}^{DR}(V)$ is an isomorphism for all $j<n-r$ and surjective for $j=n-r$. By duality (see Theorem~\ref{T3.1}(4)) this is equivalent to saying that the cup-product $H^{j-2}_{DR}(V)\to H^j_{DR}(V)$ is an isomorphism for all $j<n-r$ and injective for $j=n-r$. Beginning with $H^{-1}_{DR}(V)=0$ and $H^0_{DR}(V)=k$, and using the fact that the cohomology of projective space is $0$ in odd degrees and $k$ in even degrees generated by the hyperplane class $h \in H^2_{DR}(\mathbb P^n)$ (see Example~\ref{E3.2}(2)), our calculation is equivalent to saying that the restriction map $H^j_{DR}(\mathbb P^n) \to H^j_{DR}(V)$ is an isomorphism for all $j<n-r$.
\end{proof}

As an illustration of these results, we gather together our conclusions for a nonsingular variety.
 
 \begin{theorem}\label{T4.8} Let $V$ be a nonsingular irreducible variety in $\mathbb P^n_k$   of  codimension $r$ and dimension $d=n-r \ge 1$, with homogenous ideal $I$ in $A$. Then writing $b_i={\rm dim}\, H^{DR}_i(V)$ for the Betti numbers of $\, V$, we have
 \begin{enumerate}
 \item $H^i_I(A)=E^{m_i}$ with $m_i=b_{n-i}-b_{n-i-2}\ $ for $r<i<n$ and $H^i_I(A)=0$ for $i \ge n$.
 \item $H^j_{DR}(H^r_I(A))=0\, $ for $j<r$, and has dimension $b_{n+d-j}-b_{n+d-j+2}\, $ for $r\le j \le n$, and $b_d-b_{d-2}\ $ for $j=n+1$. 
 \end{enumerate}
\end{theorem}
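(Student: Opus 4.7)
The plan is to trace through Corollary~\ref{C4.5} and Theorem~\ref{T4.3} to reduce both parts to computing the algebraic de Rham homology dimensions of $Y = \Spec(A/I)$, then apply Proposition~\ref{P4.6} to express $H_\ell^{DR}(Y)$ as an extension built from kernels and cokernels of cap-product with the hyperplane class on $V$, and finally use the Hard Lefschetz theorem to identify those kernels and cokernels with differences of Betti numbers.

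For part (1), Corollary~\ref{C4.5} gives $m_i = \dim_k H_{n+1-i}^{DR}(Y)$ for $i > r$. Setting $j = n+1-i$, the range $r < i < n$ becomes $2 \le j \le d$, while $i \in \{n, n+1\}$ corresponds to $j \in \{1, 0\}$. Proposition~\ref{P4.6}(1) immediately yields $H_0^{DR}(Y) = H_1^{DR}(Y) = 0$, hence $H^i_I(A) = 0$ for $i \ge n$. For $2 \le j \le d$, combining Proposition~\ref{P4.6}(2) (for $j = 2$) with the long exact sequence of Proposition~\ref{P4.6}(4) (for $j \ge 3$) extracts a short exact sequence
\[
0 \to \coker\bigl(h\colon H_j^{DR}(V) \to H_{j-2}^{DR}(V)\bigr) \to H_j^{DR}(Y) \to \ker\bigl(h\colon H_{j-1}^{DR}(V) \to H_{j-3}^{DR}(V)\bigr) \to 0,
\]
where $h$ denotes cap-product with the hyperplane class.

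The key input is the Hard Lefschetz theorem: since $V$ is smooth projective of dimension $d$, the iterated cup products $h^i\colon H^{d-i}_{DR}(V) \to H^{d+i}_{DR}(V)$ are isomorphisms, which dually (via Poincar\'e duality, Theorem~\ref{T3.1}(4),(5)) means that $h\colon H_\ell^{DR}(V) \to H_{\ell-2}^{DR}(V)$ is surjective for $\ell \le d+1$, injective for $\ell \ge d+1$, and an isomorphism when $\ell = d+1$. In part (1) the relevant indices satisfy $j \le d$, so both cap-product maps in the displayed SES are surjective; the cokernel vanishes and the kernel has dimension $b_{j-1} - b_{j-3}$, yielding $m_i = b_{n-i} - b_{n-i-2}$ after substitution.

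For part (2), Corollary~\ref{C4.4} supplies the vanishing for $j < r$, and Theorem~\ref{T4.3}(1) identifies $H^j_{DR}(H^r_I(A))$ with $H_\ell^{DR}(Y)$ where $\ell = 2n+2-j-r$. For $r \le j \le n$ we get $d+2 \le \ell \le 2d+2$: the endpoint $\ell = 2d+2$ is Proposition~\ref{P4.6}(3), equal to $b_{2d} = 1 = b_{\ell-2} - b_\ell$; for $d+2 \le \ell \le 2d+1$ the same SES applies, but now the kernel vanishes ($\ell - 1 \ge d+1$ forces injectivity) while the cokernel has dimension $b_{\ell-2} - b_\ell = b_{n+d-j} - b_{n+d-j+2}$. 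Finally, $j = n+1$ corresponds to $\ell = d+1$: the isomorphism case of Hard Lefschetz kills the cokernel, while the kernel equals the primitive part $\ker\bigl(h\colon H_d^{DR}(V) \to H_{d-2}^{DR}(V)\bigr)$ of dimension $b_d - b_{d-2}$. The main obstacle, though not a deep one, is verifying that Hard Lefschetz applies to algebraic de Rham cohomology over an algebraically closed field of characteristic zero; this follows from the comparison with singular cohomology over $\mathbb{C}$ (Theorem~\ref{T3.1}(7)) together with the Lefschetz principle. Everything else is careful bookkeeping of whether the relevant index $\ell$ sits below, at, or above the critical degree $d+1$.
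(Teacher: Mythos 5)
Your proposal is correct and follows essentially the same route as the paper: identify $H^j_{DR}(H^i_I(A))$ with $H^{DR}_\ell(Y)$ via Theorem~\ref{T4.3} and Corollary~\ref{C4.5}, extract the kernel/cokernel short exact sequences from Proposition~\ref{P4.6}, and then apply Hard Lefschetz to reduce everything to differences of Betti numbers. Your treatment is slightly more explicit about the short exact sequences coming out of the Gysin-type long exact sequence, and about why Hard Lefschetz is available in algebraic de Rham cohomology, but the underlying argument is identical to the paper's.
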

\begin{proof} Since $V$ is nonsingular of dimension $d$, the hard Lefschetz theorem tell us that capping with the hyperplane class $h$ gives a map 
\[h:H_i^{DR}(V)\lto H_{i-2}^{DR}(V)
\]
that is surjective for $i\le d+1$ and injective for $i\ge d+1$. Therefore by 
Proposition~\ref{P4.6}, we find that 
$$
h_i^{DR}(Y)=\begin{cases}
  0 \hspace{2cm}\text{for} \quad i=0,1 \\
b_1 \hspace{1.85cm} \mbox{for} \quad i=2 \\
  b_{i-1}-b_{i-3} \hspace{.5cm} \mbox{for} \quad 3\le i\le d+1 \\
   b_{i-2}-b_{i} \hspace{.85cm} \mbox{for} \quad d+1< i\le 2d \\
     b_{2d-1}\hspace{1.35cm} \mbox{for} \quad i=2d+1 \\
          b_{2d}\hspace{1.35cm}\quad \mbox{for} \quad i=2d+2 \, .
\end{cases}
$$
Substituting these values in the statement of Theorem~\ref{T4.3} gives the desired assertions. 
\end{proof}

  \begin{remark}\label{R4.9}{\rm  Many of the results of this section concerning the case $i>r$ are not new. What is new are the results concerning the $\D$-module structure of the nontrivial case $H^r_I(A)$, especially Theorem~\ref{T4.3}(1) and Theorem~\ref{T4.8}(2). 
  
  The understanding of the relationship  between cofiniteness and vanishing of the local cohomology modules $H^i_I(A)$ for $i>r$ and the algebraic de Rham cohomology of the projective variety $V$ goes back to Ogus \cite{O}. This connection is acknowledged in the last paragraph of Lyubeznik's paper \cite{L}, just after he has defined some new numerical invariants of a local ring, commonly called {\it Lyubeznik numbers}. The study of these numbers  has led to several results analogous to ours. 
  
  Garci\`{a}-L\'{o}pez and Sabbah \cite{Lopez} give a result similar to our Corollary~\ref{C4.5} for an isolated singularity of a complex analytic space, in terms of local topological invariants. 
  
Blickle and Bondu \cite{BB} give a similar result for a point $P$ in a complex analytic space $Y$ under the condition that $Y - P $ is an intersection homology manifold. This condition is probably equivalent to   Ogus's condition on the $DR$-depth \cite[4.1]{O}, which is in fact {\it equivalent} to the local cohomology modules $H^i_I(X)$ being cofinite for $i>r$. 

Lyubeznik, Singh, and Walther \cite[3.1]{Singh} give another analogue of our Corollary~\ref{C4.5} over $\mathbb C$, taking as {\it hypothesis} that these local cohomology groups have support at $\m$, and computing the $m_i$ in terms of the complex singular cohomology of $\mathbb C^n \setminus Y$. 

Switala \cite{S} in a recent paper about Lyubeznik numbers recovers independently our Theorem~\ref{T4.8}(1) for the vertex of the cone over a nonsingular projective variety. His argument is similar to ours, but uses cohomology instead of homology. 
  } 
\end{remark}

\bigskip

\section{$\D$-modules over the power series ring}\label{Main}


\begin{theorem}\label{main} Let $A$ be the power series ring $k[\![x_1, \ldots,x_n]\!]$, let $E$ be an injective hull of $k$ over $A$, and let $M$ be a holonomic $\D$-module. If $m=\dim_k H^n_{DR}(M)$, then there is a surjective homomorphism of $\D$-modules
\[M\lto E^m \lto 0\, .
\] 
\end{theorem}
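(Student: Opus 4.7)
The plan is to induct on $n$, extending van den Essen's reduction $M\leadsto M/\partial_n M$ to produce the desired $\D_n$-quotient $M\to E_n^m$ from the inductively constructed $\D_{n-1}$-quotient of $M/\partial_n M$. Here I write $\D_n$ for the ring of differential operators over $k[\![x_1,\ldots,x_n]\!]$ and $E_n$ for the injective hull of $k$, and I exploit the fundamental decomposition $E_n\cong E_{n-1}\otimes_k E_1^{(n)}$ as $\D_n$-modules, where $E_1^{(n)}$ has $k$-basis $\{x_n^{-a}\}_{a\ge 1}$.

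First I would reduce the theorem to constructing $m$ $k$-linearly independent elements of $\Hom_{\D_n}(M,E_n)$. The natural assignment $\phi\mapsto H^n_{DR}(\phi)$ embeds $\Hom_{\D_n}(M,E_n)$ into $H^n_{DR}(M)^*$: if the induced map vanishes then $\phi(M)$ is contained in the proper $k$-subspace $\sum_i\partial_iE_n$ of the simple $\D_n$-module $E_n$, forcing $\phi=0$. Conversely, since $E_n$ is simple with $\End_{\D_n}(E_n)=k$, every $\D_n$-submodule of $E_n^m$ has the form $E_n\otimes_k V$ for some $V\subseteq k^m$, so any $m$ linearly independent maps $\phi_1,\ldots,\phi_m\colon M\to E_n$ assemble into a surjection $(\phi_1,\ldots,\phi_m)\colon M\twoheadrightarrow E_n^m$. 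Thus the theorem is equivalent to $\dim_k\Hom_{\D_n}(M,E_n)=m$.

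For the inductive step I would invoke van den Essen's lemma \cite[2.2]{essen}: after a suitable $k$-linear change of coordinates, $M/\partial_n M$ is holonomic over $\D_{n-1}$, the $\D_{n-1}$-action being inherited because $\partial_n$ commutes with each $x_i$ and $\partial_j$ for $i,j<n$. The short exact sequence of Koszul complexes yields
\[H^n_{DR}(M)\;=\;M\Big/\textstyle\sum_i\partial_iM\;=\;(M/\partial_n M)\Big/\sum_{i<n}\partial_i(M/\partial_n M)\;=\;H^{n-1}_{DR}(M/\partial_n M),\]
so both sides have dimension $m$. By induction I obtain $m$ linearly independent $\D_{n-1}$-maps $\bar\phi_1,\ldots,\bar\phi_m\colon M/\partial_n M\to E_{n-1}$, and I would lift each of them via
\[\phi_j(v)\;:=\;\sum_{a\ge 1}\bar\phi_j\bigl([x_n^{a-1}v]\bigr)\otimes x_n^{-a}\;\in\;E_n,\]
where $[\,\cdot\,]$ denotes the class in $M/\partial_n M$. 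A direct calculation using the commutator $x_n^{a-1}\partial_n=\partial_n x_n^{a-1}-(a-1)x_n^{a-2}$ together with $\bar\phi_j(\partial_n M)=0$ verifies that $\phi_j$ is $\D_n$-equivariant; the $\phi_j$ remain $k$-linearly independent because their $x_n^{-1}$-components recover the $\bar\phi_j$.

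The main obstacle is showing that for each $v\in M$ the defining sum has only finitely many nonzero terms, so that $\phi_j(v)$ actually lies in $E_n$ rather than in some formal completion. It suffices to prove the stronger statement that $x_n^{a-1}v\in\partial_n M$ for $a\gg 0$ depending on $v$. The descending chain of $\D_{n-1}$-submodules $[x_n^aM]\subseteq M/\partial_n M$ stabilizes because $M/\partial_n M$ has finite length over $\D_{n-1}$, and I would view the extension of van den Essen's method as the proof that this stable limit vanishes, possibly after a further refinement of coordinates or by factoring $\bar\phi_j$ through a suitable quotient having the same top de Rham cohomology. Once finiteness is secured, the assembled map $(\phi_1,\ldots,\phi_m)\colon M\to E_n^m$ is the desired surjection, and the base case $n=1$ is immediate from the structure of holonomic $\D_1$-modules (finite length with simple composition factors among $k[\![x]\!]$ and $E_1$).
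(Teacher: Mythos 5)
Your high-level strategy is closely related to the paper's, but the crucial step that you flag as "the main obstacle" is precisely where the paper's real work lies, and your proposed resolution of it does not go through as stated.

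The reduction of the theorem to producing $m$ linearly independent maps in $\Hom_{\D_n}(M,E_n)$ is correct and is exactly what the paper records as Corollary~\ref{MC}. The identification $H^n_{DR}(M)=H^{n-1}_{DR}(M/\partial_n M)$ after making $M$ $x_n$-regular is also correct and is a clean observation. The lifting formula $\phi_j(v)=\sum_{a\ge 1}\bar\phi_j([x_n^{a-1}v])\otimes x_n^{-a}$ is the correct explicit form of the map that the paper constructs abstractly via Matlis duality and continuity (Propositions~\ref{P1}--\ref{P3}), and your verification of $\D_n$-equivariance (assuming the sum makes sense) is fine. So far, so good.

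The gap is in the finiteness of the defining sum. You propose to prove the stronger statement that $x_n^{a}v\in\partial_n M$ for $a\gg 0$, and you suggest that the stabilization of the chain $[x_n^a M]$ in $M/\partial_n M$, plus "a further refinement of coordinates," will give this. But the chain $[x_n^a M]$ stabilizing at a $\D_{n-1}$-submodule of $M/\partial_n M$ does not give that it stabilizes at zero, and there is no a priori reason for the stronger claim $[x_n^a v]=0$ eventually. What is actually needed --- and is both weaker and provable --- is only that $\bar\phi_j([x_n^a v])=0$ for $a\gg 0$. To obtain that, you must combine two ingredients: (i) a quantitative statement that $[x_n^m v]$ lies in $\m_B^{\rho(m)}\cdot \overline{E}$, where $\overline{E}$ is a finitely generated $B$-submodule of $M/\partial_n M$ with $B=k[\![x_1,\ldots,x_{n-1}]\!]$ and $\rho(m)\to\infty$; this is exactly the content of the paper's Lemma~\ref{L1} together with the "Furthermore" clause of Proposition~\ref{L2}, which goes beyond what van den Essen's published lemma supplies; and (ii) the fact that, because $\bar\phi_j$ lands in $E_{n-1}$, the image $\bar\phi_j(\overline{E})$ has finite length, so $\bar\phi_j(\m_B^t\overline{E})=0$ for some $t$ --- this is the continuity phenomenon that the paper isolates in Definition~\ref{cont} and Proposition~\ref{P1}. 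Your proposal names neither of these; it gestures at "extending van den Essen's method" without identifying the actual estimate required, and it replaces the correct continuity argument with a vanishing claim that is both unnecessary and unjustified. In short, you have correctly reorganized the construction but left a hole exactly where the paper's Propositions~\ref{P2}, \ref{P3} and their supporting lemmas do the heavy lifting.
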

\demo  Recall that $H^n_{DR}(M)$ is a finite dimensional $k$-vector space (see Section \ref{S2}). Observe that since $H^n_{DR}(M)$ is the homology of the last term of the de Rham complex, it is simply $M/(\partial_1, \ldots, \partial_n)M$. Choose a linear map from $M/(\partial_1, \ldots, \partial_n)M$ to $k$ and compose it with the canonical epimorphism from $M$ to $M/(\partial_1, \ldots, \partial_n)M$ to obtain a map $\psi$ from $M$ to $k$. Because of Proposition \ref{P3} below,  $\psi$ is a continuous map in the sense of Definition \ref{cont}. Therefore, by Proposition \ref{P1}, the map $\psi$ corresponds to an $A$-linear map $\varphi$ from $M$ to $E$. The correspondence in Proposition \ref{P1} depends on the choice of a $k$-linear projection $\pi$ of $E$ to its socle $k$. We now choose $\pi $ to be the projection of $E$ to $E/(\partial_1, \ldots, \partial_n)E$, which is isomorphic to the socle of $E$. Then we have a diagram
$$\xymatrix{
\ar@{->}[d]M\ar@{->}[r]^{\varphi}&E\ar@{->}[d]^{\pi}\\
M/(\partial_1, \ldots, \partial_n)M\ar@{->}[r]&k=E/(\partial_1, \ldots, \partial_n)E}$$ 

\bigskip
\noindent 
which shows that $\varphi$ maps the kernel of $\psi$ to the kernel of $\pi$. Hence 
\[\varphi((\partial_1, \ldots, \partial_n)M)\subset (\partial_1, \ldots, \partial_n)E\, .\]

Now according to Proposition \ref{P2} the map $\varphi$ is not only $A$-linear, but  is also a map of $\D$-modules. Further, observe that since $E$ is a simple $\D$-module and $\varphi$ is not zero,  $\varphi$ is surjective. 
Applying the same reasoning to a basis for the space of linear maps from $M/(\partial_1, \ldots, \partial_n)M$ to $k$ we obtain $m$ maps from $M$ to $E$ and therefore a single surjective map from $M$ to $E^m$. This completes our proof, subject to Propositions \ref{P1}, \ref{P2}, \ref{P3} below. 
\QED

\begin{corollary}\label{MC} Let $A$ be the power series ring $k[\![x_1, \ldots,x_n]\!]$, let $E$ be an injective hull of $k$ over $A$, and let $M$ be a holonomic $\D$-module. Then  \[\dim_k H^n_{DR}(M)=\dim_k \Hom_{\D}(M,E)\, .\]\end{corollary}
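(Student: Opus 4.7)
The plan is to show the corollary as a straightforward consequence of Theorem \ref{main}, by proving the reverse inequality via a natural duality pairing between $\Hom_{\D}(M,E)$ and the dual of $H^n_{DR}(M)$. The lower bound $\dim_k \Hom_{\D}(M,E) \geq m$ is immediate: the surjection $M \twoheadrightarrow E^m$ constructed in Theorem \ref{main} yields $m$ linearly independent $\D$-linear maps $M \to E$ by projection onto each factor.

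For the upper bound, I would construct a canonical $k$-linear map
\[
\Phi \colon \Hom_{\D}(M,E) \lto \Hom_k\bigl(H^n_{DR}(M),\, k\bigr)
\]
as follows. Given $\varphi \in \Hom_{\D}(M,E)$, compose with the projection $\pi \colon E \to E/(\partial_1,\ldots,\partial_n)E \cong k$ used in the proof of Theorem \ref{main}. Because $\varphi$ is $\D$-linear, $\varphi\bigl((\partial_1,\ldots,\partial_n)M\bigr) \subseteq (\partial_1,\ldots,\partial_n)E$, so $\pi\circ\varphi$ factors through the quotient $M/(\partial_1,\ldots,\partial_n)M = H^n_{DR}(M)$, giving an element of $\Hom_k(H^n_{DR}(M),k)$. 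Set $\Phi(\varphi)$ to be this factored map.

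The key step is to show $\Phi$ is injective. Suppose $\Phi(\varphi) = 0$, so that $\varphi(M) \subseteq \ker \pi = (\partial_1,\ldots,\partial_n)E$. Since $\varphi$ is $\D$-linear, $\varphi(M)$ is a $\D$-submodule of $E$; by Examples \ref{E2.2}(2), $E$ is a simple $\D$-module, so either $\varphi(M) = 0$ or $\varphi(M) = E$. The latter is impossible because $E/(\partial_1,\ldots,\partial_n)E \cong k \neq 0$, so $E \not\subseteq (\partial_1,\ldots,\partial_n)E$. Therefore $\varphi = 0$, proving injectivity of $\Phi$.

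Since $H^n_{DR}(M)$ is finite-dimensional of dimension $m$ (see Section \ref{S2}), we have $\dim_k \Hom_k(H^n_{DR}(M),k) = m$, and the injectivity of $\Phi$ gives $\dim_k \Hom_{\D}(M,E) \leq m$. Combining with the lower bound from Theorem \ref{main} yields equality. The main obstacle is not really a difficult step — the bulk of the work sits inside Theorem \ref{main} itself — but the subtle point to get right is identifying the correct projection $\pi$ (the one used in the proof of Theorem \ref{main}) so that its kernel is $(\partial_1,\ldots,\partial_n)E$, which is what makes the simplicity argument for $E$ go through cleanly.
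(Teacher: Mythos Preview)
Your proof is correct and follows the same outline as the paper's: one inequality comes directly from Theorem \ref{main}, and the other from the fact that a nonzero $\D$-map $M\to E$ is surjective (by simplicity of $E$) and hence induces a nonzero map on $H^n_{DR}$. The paper packages the second step as right exactness of $H^n_{DR}$ applied to a surjection $M\twoheadrightarrow E^s$ built from $s$ independent maps, whereas you package it as injectivity of $\Phi:\varphi\mapsto H^n_{DR}(\varphi)\in \Hom_k(H^n_{DR}(M),k)$; these are the same observation viewed dually, and your formulation has the small advantage of avoiding the check that the combined map to $E^s$ is surjective.
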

\demo By the theorem we have  \[\dim_k H^n_{DR}(M)\le \dim_k \Hom_{\D}(M,E)\, .\] Conversely, if $\dim_k \Hom_{\D}(M,E)=s$, then there is a surjective map from $M$ to $E^s$. Apply the de Rham cohomology functor. Then 
\[H^n_{DR}(M) \lto H^n_{DR}(E^s)\lto 0\]
Since $H^n_{DR}(E)=k$, we have \[\dim_k H^n_{DR}(M)\ge \dim_k H^n_{DR}(E^s)=s\, .\]
\QED

Now we are ready to prove the three propositions that are the main ingredients in the proof of Theorem \ref{main}.

\begin{definition}\label{cont} Let $(A, \m, k)$ be a local ring that contains its residue field $k$. Let $M$ be an $A$-module. A $k$-linear homomorphism $\psi$ of $M$ to $k$ is called {\it continuous} if for every finitely generated submodule $N$ of $M$ there is an integer $s$ such that $\psi(\m^sN) =0$. We denote the $A$-module of continuous linear homomorphisms by $\Homcont_k(M,k)$. 
\end{definition}

The following proposition appears in \cite{GR68, SAr16} but we give the proof for convenience. 

\begin{proposition}\label{P1}\cite[IV, Remarque 5.5]{GR68} Let $(A, \m, k)$ be a local ring that contains its residue field $k$, and $E$ an injective hull of $k$. For any $A$-module $M$, the Matlis dual $\Hom_A(M,E)$ is isomorphic as a $k$-vector space to the module  $\Homcont_k(M,k)$. 
\end{proposition}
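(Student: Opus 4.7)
The plan is to construct an explicit $k$-linear isomorphism $\Phi: \Hom_A(M, E) \to \Homcont_k(M, k)$. Fix a $k$-linear projection $\pi: E \to k$ onto the socle, and set $\Phi(\varphi) = \pi \circ \varphi$. First I would check continuity: for any finitely generated submodule $N \subset M$, the image $\varphi(N)$ is a finitely generated $A$-submodule of $E$, and since every element of $E$ is annihilated by some power of $\m$, the submodule $\varphi(N)$ is killed by $\m^s$ for some $s$, so $\Phi(\varphi)(\m^s N) = \pi(\m^s \varphi(N)) = 0$. That $\Phi$ is $k$-linear is immediate.

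Injectivity of $\Phi$ is straightforward: if $\pi \circ \varphi = 0$ but $\varphi(m) \neq 0$ for some $m$, then $A \cdot \varphi(m) \subset E$ is nonzero and, by essentiality of $E$ over its socle, contains $k$, so some $a\varphi(m) \in k \setminus \{0\}$ and $\pi(\varphi(am)) = a\varphi(m) \neq 0$, a contradiction.

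The main obstacle is surjectivity. Given a continuous $\psi$, I would build an $A$-linear lift $\varphi: M \to E$ by Zorn's lemma applied to the poset of pairs $(N, \varphi_N)$ with $N \subset M$ an $A$-submodule and $\varphi_N: N \to E$ an $A$-linear map satisfying $\pi \circ \varphi_N = \psi|_N$. The delicate step is extending a maximal element $(N_0, \varphi_{N_0})$ to $N_0 + Am$ for some $m \notin N_0$: setting $J = \{a \in A : am \in N_0\}$, the extension amounts to finding $e \in E$ with $be = \varphi_{N_0}(bm)$ for all $b \in J$ and $\pi(ae) = \psi(am)$ for all $a \in A$. Using injectivity of $E$, first extend the $A$-linear map $\theta(b) = \varphi_{N_0}(bm)$ from $J$ to some $\tilde\theta_0: A \to E$, producing a preliminary $e_0 = \tilde\theta_0(1)$. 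The discrepancy $\mu(a) := \psi(am) - \pi(a e_0)$ vanishes on $J$ and, by continuity of $\psi$ combined with the fact that $e_0$ lies in $(0 :_E \m^u)$ for some $u$, also vanishes on $\m^s$ for some large $s$. Thus $\mu$ descends to a $k$-linear functional on the finite-length module $L = A/(J + \m^s)$. Here I would invoke the finite-length case of the claim: the map $\Hom_A(L, E) \to \Hom_k(L, k)$, $\varphi \mapsto \pi \circ \varphi$, is a $k$-linear isomorphism whenever $L$ has finite length, being injective by essentiality while both sides have $k$-dimension equal to the length of $L$. This produces $\delta: A \to E$ with $\delta|_J = 0$ and $\pi \circ \delta = \mu$, and $e := e_0 + \delta(1)$ satisfies both required conditions, contradicting maximality of $N_0$.
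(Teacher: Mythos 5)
Your proposal is correct, but it takes a genuinely different route from the paper's. The paper proves the isomorphism by a functorial bootstrap: first for finite-length modules by induction on length, using contravariant exactness of both $\Hom_A(-,E)$ and $\Homcont_k(-,k)$; then for finitely generated $M$ by observing that every element of either side factors through $M/\m^{\ell}M$ for some $\ell$; and finally for arbitrary $M$ by writing $M=\varinjlim M_\ell$ over its finitely generated submodules and noting that both functors carry this filtered colimit to a limit. You instead prove injectivity of $\Phi$ directly from the essentiality of the socle $k\subset E$ (a step the paper does not isolate), and you prove surjectivity by a Zorn's-lemma construction of an explicit $A$-linear lift of a given continuous functional $\psi$. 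The nontrivial content is your careful reduction of the one-generator extension step: using injectivity of $E$ to produce a first approximation $e_0$, then using continuity of $\psi$ and the fact that $e_0$ is $\m^u$-torsion to descend the discrepancy $\mu$ to a finite-length quotient $L=A/(J+\m^s)$, and finally invoking the finite-length case to correct $e_0$. You settle the finite-length case by a dimension count (both sides have $k$-dimension equal to $\operatorname{length}L$, and the map is injective by essentiality) rather than by the paper's induction, though the length formula $\dim_k\Hom_A(L,E)=\operatorname{length}L$ is itself proved by an induction of the same flavor. Both arguments rely implicitly on the same Noetherian-type hypothesis, namely that finitely generated submodules of $E$ have finite length and that $A/(J+\m^s)$ has finite length; this is satisfied in the power series setting where the proposition is applied. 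Your approach buys an explicit, constructive inverse; the paper's buys a shorter argument with less case-by-case bookkeeping.
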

\demo  Choose a $k$-linear projection $\pi$ of $E$ to its socle $k$. Then for any $\varphi \in \Hom_A(M,E)$, composing with $\pi$, we obtain a $k$-linear homomorphism $\psi$ from $M$ to $k$. Let us show that $\psi$ is continuous. For any finitely generated submodule $N$ of $M$ the image $\varphi(N)$ is a finitely generated submodule of $E$ and therefore is an $A$-module of finite length. Thus there exists an integer $s$ such that $\m^s\varphi(N)=0$. It follows that $\varphi(\m^s N)=0$ and thus $\psi(\m^s N)=0$. Hence $\psi$ is continuous in the sense of Definition \ref{cont}. 
We have thus constructed a $k$-linear map $\Lambda$ from $\Hom_A(M,E)$ to $\Homcont_k(M,k)$. Now we show that $\Lambda$ is an isomorphism. If $M=k$, it is obvious. For an $A$-module of finite length, the statement follows by induction on the length of the module and short exact sequences
\[ 0\lto M' \lto M \lto M''\lto 0
\]
and the fact that $\Hom_A(M,E)$ and  $\Homcont_k(M,k)$ are contravariant exact functors. Next, if $M$ is a finitely generated $A$-module, then every homomorphisms of either $\Hom_A(M,E)$ or   $\Homcont_k(M,k)$ factors through $M/\m^{\ell}M$ for some $\ell$, hence we have 
\[ \Hom_A(M,E)= \varinjlim \Hom_A(M/\m^{\ell} M,E)\simeq \varinjlim \Homcont_k(M/\m^{\ell} M,k)=\Homcont_k(M,k)\, .\] For $M$ an arbitrary $A$-module, think of $M$ as the direct limit of its finitely generated submodules, $M= \varinjlim M_{\ell}$. Thus we have
\begin{eqnarray*}
\Hom_A(M,E)&=&  \Hom_A( \varinjlim M_{\ell},E)= \varprojlim \Hom_A(M_{\ell},E)\\ &\simeq& \varprojlim \Homcont_k(M_{\ell},k)
= \Homcont_k( \varinjlim M_{\ell},k)=\Homcont_k(M,k)\, .
\end{eqnarray*}

 \QED

\begin{proposition}\label{P2} Let $A$ be the power series ring $k[\![x_1, \ldots,x_n]\!]$, let $E$ be an injective hull of $k$ over $A$,  let $M$ be a $\D$-module, and let $\varphi: M \lto E$ be an $A$-linear map such that 
\[\varphi(\partial M)\subset \partial E\, , \]
where $\partial=(\partial_1, \ldots, \partial_n)$. Then $\varphi $ is also $\D$-linear. 
\end{proposition}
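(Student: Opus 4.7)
The plan is to define, for each $i = 1, \ldots, n$, the $k$-linear map $\delta_i \colon M \to E$ by
\[ \delta_i(m) = \varphi(\partial_i m) - \partial_i \varphi(m), \]
and to show $\delta_i \equiv 0$. Together with the given $A$-linearity of $\varphi$, the vanishing of every $\delta_i$ is precisely the statement that $\varphi$ is $\D$-linear (the relations $\partial_i x_j = x_j \partial_i + \delta_{ij}$ are automatically respected once $\varphi$ commutes with both the $x_j$ and the $\partial_i$).

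First I would verify that $\delta_i$, a priori only $k$-linear, is in fact $A$-linear. Expanding $\delta_i(fm)$ for $f \in A$ via the Leibniz rule $\partial_i(fm) = (\partial_i f)m + f \partial_i m$ in both $M$ and $E$ and using $A$-linearity of $\varphi$, the two cross-terms $(\partial_i f)\varphi(m)$ cancel and one obtains $\delta_i(fm) = f \delta_i(m)$. So $\delta_i \in \Hom_A(M,E)$.

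Second, I would observe that the image of $\delta_i$ is contained in $\partial E = \sum_j \partial_j E$: the hypothesis gives $\varphi(\partial_i m) \in \partial E$, and $\partial_i \varphi(m) \in \partial_i E \subseteq \partial E$ is automatic. Composing with the projection $\pi \colon E \to E/\partial E \cong k$ used in the proof of Theorem~\ref{main}, we therefore get $\pi \circ \delta_i = 0$ as an element of $\Homcont_k(M,k)$.

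Third, I would invoke Proposition~\ref{P1}: the map $\Lambda \colon \Hom_A(M,E) \to \Homcont_k(M,k)$, $\varphi \mapsto \pi \circ \varphi$, is a $k$-linear isomorphism and in particular is injective. Hence $\pi \circ \delta_i = 0$ forces $\delta_i = 0$, and so $\varphi(\partial_i m) = \partial_i \varphi(m)$ for all $i$ and all $m$, which is the $\D$-linearity we want. There is no real obstacle here: the only nontrivial input is the injectivity half of Proposition~\ref{P1}, which is already available. The conceptual point is that the hypothesis $\varphi(\partial M) \subset \partial E$ is exactly what is needed to make the natural $A$-linear defect $\delta_i$ land in $\ker \pi$, so that Matlis duality forces it to vanish.
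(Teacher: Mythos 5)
Your proof is correct, but it takes a genuinely different route from the paper's. The paper proves $\partial_i\varphi(m)=\varphi(\partial_i m)$ by a direct iteration: it observes that both sides lie in $\partial E$, that multiplication by $(x_1,\ldots,x_n)$ is injective on $\partial E$, and then uses the commutation relation $x_i\partial_i=\partial_i x_i-1$ to trade the problem for the same identity applied to $x_j m$; repeating this reduces to monomials $\alpha$ of arbitrarily high degree, where both sides vanish because $\varphi(\alpha m)=\alpha\varphi(m)$ and elements of $E$ are killed by high powers of $\m$. Your argument instead packages the defect $\delta_i=\varphi\circ\partial_i-\partial_i\circ\varphi$ as an $A$-linear map, shows it lands in $\partial E=\ker\pi$, and then kills it in one stroke via the injectivity of the Matlis-duality map $\Lambda$ of Proposition~\ref{P1} (with the same choice of $\pi$ that the paper uses in the proof of Theorem~\ref{main}, so the identification $E/\partial E\cong\mathrm{socle}(E)=k$ is available). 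Both proofs rest on the same two facts about $E$ --- that $E$ decomposes as $\mathrm{socle}(E)\oplus\partial E$ and that finitely many elements of $E$ are annihilated by a high power of $\m$ --- but you exploit them through the already-proved Proposition~\ref{P1} rather than redoing the estimate by hand. Your version is shorter and more conceptual, and it makes explicit the structural role of Proposition~\ref{P1}; the paper's version is more elementary and self-contained (it does not depend on Proposition~\ref{P1} at all, which keeps the three auxiliary propositions logically parallel rather than nested). Since Proposition~\ref{P1} is established independently, there is no circularity in your use of it, and the argument is sound.
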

\demo We must show that $\partial_i\varphi(m)=\varphi(\partial_i m)$ for all $i$ and for all $m \in M$. Observe that both sides are elements of $\partial E$. The left hand side because it is a $\partial_i$ of something, the right hand side because of our hypothesis that $\varphi(\partial M)\subset \partial E$. Next we note that the map $\partial E \lto E^n$, sending  $e \in \partial E$ to $(x_1e, \ldots, x_n e) \in E^n$, is injective. Therefore it sufficient to prove that for every $j$, 
\begin{equation*}\tag{$\ast$}
x_j\partial_i\varphi(m)=x_j\varphi(\partial_i m) \quad \forall m \in M\, .
\end{equation*}
We claim that statement ($\ast$)  is equivalent to showing 
\[\partial_i\varphi(x_j m)=\varphi(\partial_i (x_j m)) \quad \forall m \in M\, .
\]
If $i \not=j$ the claim is clear because $x_j$ and $\partial_i$ commute and $\varphi$ is $A$-linear. If $i=j$, then we use the equation $x_i\partial_i=\partial_i x_i -1$ in the ring of differential 
operators. Indeed, notice that the left hand side of ($\ast$) is
\[x_i\partial_i(\varphi(m))=\partial_i\varphi(x_i m)-\varphi(m)\]
while the right hand side of ($\ast$) is
\[x_i\varphi(\partial_i m)=\varphi(x_i\partial_i m)=\varphi(\partial_i(x_i m)-m)=\varphi(\partial_i(x_i m)-\varphi(m)\, .\]
Now  after canceling $\varphi(m)$ we obtain the desired claim. We have thus replaced the original problem for $m\in M$  by the same problem for $x_jm$. Repeating the same procedure it is sufficient to show 
\[\partial_i\varphi(\alpha m)=\varphi(\partial_i (\alpha m)) \quad \forall m \in M
\]
for all  monomials $\alpha \in A$ of any high degree we like. To conclude notice that both sides are zero for degree of $\alpha$ sufficiently large.  Indeed, the left hand side is clearly zero since $\varphi(\alpha m)=\alpha \varphi(m)$ and $\varphi(m)\in E$. The right hand side is zero because using the product rule we have 
\[\varphi(\partial_i (\alpha m))=\varphi(\partial_i(\alpha) m)+ \varphi(\alpha\partial_i m)=\partial_i(\alpha)\varphi(m) + \alpha\varphi(\partial_i m)\]
and both $\partial_i(\alpha)$ and $\alpha$ have sufficiently high degree. 
\QED

\begin{remark}{\rm The statement and the proof of Proposition~\ref{P2} also hold over a polynomial ring or its localization at the maximal ideal.}
\end{remark}

\begin{theorem}$[$\cite[3.3.19]{B},\cite[Prop. 1]{example}$]$\label{bjork}  Let $A$ be the power series ring $k[\![x_1, \ldots,x_n]\!]$ and let $M$ be a holonomic $\D$-module over $A$. Then there exists a nonzero element $g\in A$ such that $M[g^{-1}]$ is a holonomic $\D$-module that is finitely generated as an $A[g^{-1}]$-module. Furthermore, after a linear change of variables, we may assume that $g(x_1, 0, \ldots, 0)\not=0$, and in that case we can take $g$ to be a Weierstra\ss \  polynomial
\[x_1^r+a_1x_1^{r-1}+ \ldots+a_r\, ,\]
with $a_i\in k[\![x_2, \ldots,x_n]\!]$. In this situation we say that $M$ is {\em $x_1$-regular}. 
\end{theorem}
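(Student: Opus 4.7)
The plan is to exploit the characteristic variety of the holonomic $\D$-module $M$. I would first equip $M$ with a good filtration $F$, producing a finitely generated associated graded $\mathrm{gr}^F M$ over $\mathrm{gr}\,\D\cong A[\xi_1,\ldots,\xi_n]$. The support of $\mathrm{gr}^F M$ in $\Spec A[\xi_1,\ldots,\xi_n]$ is the characteristic variety $\mathrm{Ch}(M)$, which is independent of the choice of good filtration, and holonomicity of $M$ forces $\dim\mathrm{Ch}(M)=n$.

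The decisive reduction is that $M$ is finitely generated over $A$ if and only if $\mathrm{Ch}(M)$ is contained in the zero section $\{\xi_1=\cdots=\xi_n=0\}$: in that case some power of each $\xi_i$ annihilates $\mathrm{gr}^F M$, which lets one bound the filtration jumps and extract finitely many $A$-generators of $M$. So I need only find $g\in A\setminus\{0\}$ whose inversion kills every component of $\mathrm{Ch}(M)$ lying outside the zero section. Each irreducible component of $\mathrm{Ch}(M)$ is conical and $n$-dimensional; its projection to $\Spec A$ is an irreducible closed subset of dimension at most $n$, and if this projection were all of $\Spec A$, a short dimension/conicality argument (any nonzero $\xi$-point forces a full fiber line, contradicting that a surjection from an $n$-dimensional component to $\Spec A$ is generically finite) would identify the component with the zero section itself. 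Hence every non-zero-section component projects to a proper closed subset $Z\subsetneq\Spec A$, and I take $g$ to be any nonzero element of $A$ vanishing on the union of these finitely many $Z$'s. After inverting $g$ the characteristic variety of $M[g^{-1}]$ sits inside the zero section, yielding $A[g^{-1}]$-finite generation of $M[g^{-1}]$; holonomicity of the localization is preserved by the argument already invoked in Theorem~\ref{T2.1}.

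For the Weierstra\ss\ clause, a linear change of coordinates first arranges $g(x_1,0,\ldots,0)\neq 0$: write $g$ as a power series and let $g_d$ be its lowest-degree homogeneous component; since $g_d$ is a nonzero polynomial over the infinite field $k$, a generic linear change placing the new $x_1$-axis outside the hypersurface $\{g_d=0\}$ succeeds, and then $g(x_1,0,\ldots,0)$ has a nonzero leading term in $x_1$. Applying the classical Weierstra\ss\ preparation theorem factors $g=u\cdot p$ with $u$ a unit in $A$ and $p$ a distinguished polynomial in $x_1$ of the required form. Since $u$ is a unit, $A[g^{-1}]=A[p^{-1}]$, and one may replace $g$ by $p$ without disturbing the conclusion of the first part.

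The main obstacle is the first stage, namely transporting the characteristic-variety machinery from the polynomial to the formal power series setting. One must verify that $\mathrm{gr}\,\D$ really is a polynomial ring over $A$ (so that the usual Hilbert-series apparatus applies), that the dimension of $\mathrm{gr}^F M$ recovers the intrinsic $\D$-module dimension of $M$ (so that holonomicity is equivalent to $\dim\mathrm{Ch}(M)=n$), and that containment of $\mathrm{Ch}(M)$ in the zero section genuinely implies finite generation of $M$ over $A$. These verifications occupy a substantial part of \S 3.3 of \cite{B} and are not formal consequences of the polynomial case, which is precisely why the statement is recorded as an independent theorem.
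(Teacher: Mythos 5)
The paper does not supply a proof of this statement; it is imported verbatim, with the citations to Bj\"ork \cite[3.3.19]{B} and van den Essen \cite[Prop.\ 1]{example} serving as the proof. So there is no internal argument for you to match against, and your proposal should be judged as a free-standing reconstruction. Read that way, the outline is sound and consistent with how these references actually proceed. The three load-bearing assertions are exactly the ones you flag: that $\mathrm{gr}\,\D\cong A[\xi_1,\ldots,\xi_n]$ with the characteristic variety well-defined, conical, and of dimension $n$ for holonomic $M$; that $\mathrm{Ch}(M)$ contained in the zero section forces the good filtration to stabilize and hence $M$ to be $A$-finite; and that these facts localize correctly. Your dimension-count step is essentially right but could be phrased more carefully: the conicality argument must be applied to the fiber over the \emph{generic} point of $\Spec A$ (not an arbitrary one). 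If an irreducible component $C$ dominates $\Spec A$, the fiber over the generic point has dimension $\dim C-n=0$; being conical, it is then the origin, so $C$ contains the generic point of the zero section and, by irreducibility and dimension, equals it. Everything else (finitely many residual components, a single $g$ vanishing on their projections, Weierstra\ss\ preparation after a generic linear change that makes $g(x_1,0,\ldots,0)\neq 0$, preservation of holonomicity under inverting $g$ as already recorded in the proof of Theorem~\ref{T2.1}) is correct, and your honest acknowledgment that verifying the characteristic-variety calculus over $k[\![x_1,\ldots,x_n]\!]$ is where the real work lives is exactly the right assessment of why this is cited as a theorem rather than re-derived.
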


\begin{theorem}\cite[Thm I]{vdE} \label{Essen} Let $A$ be the power series ring $k[\![x_1, \ldots,x_n]\!]$ and let $M$ be a holonomic $\D$-module over $A$ that is $x_1$-regular. Then $M/\partial_1(M)$ is a holonomic $\D$-module over the ring $k[\![x_2, \ldots,x_n]\!]$. 
\end{theorem}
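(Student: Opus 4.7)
The first step is to equip $M/\partial_1 M$ with a natural $\D'$-module structure, where $\D' = k[\![x_2, \ldots, x_n]\!]\langle \partial_2, \ldots, \partial_n\rangle$. Since $[\partial_1, \partial_j] = 0$ and $[\partial_1, x_j] = 0$ for every $j \geq 2$, the operators $\partial_j$ and multiplication by $x_j$ (for $j \geq 2$) preserve $\partial_1 M$ and descend to the quotient, while $x_1$ and $\partial_1$ themselves do not. Thus $M/\partial_1 M$ naturally sits over $A' = k[\![x_2, \ldots, x_n]\!]$, and the content of the theorem is the holonomicity assertion over $\D'$.

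For holonomicity, I would exploit the $x_1$-regularity guaranteed by Theorem \ref{bjork}: there is a Weierstrass polynomial
\[
g = x_1^r + a_1 x_1^{r-1} + \cdots + a_r, \qquad a_i \in A',
\]
such that $N := M[g^{-1}]$ is finitely generated as an $A[g^{-1}]$-module. The key structural input is the Weierstrass division theorem: every $f \in A$ can be written uniquely as $f = qg + \rho$ with $\rho \in A'[x_1]$ of $x_1$-degree less than $r$, and this extends module-theoretically, so that any element of $N$ has bounded ``$x_1$-content'' modulo $g$.

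The core construction is then to pick elements $m_1, \ldots, m_p \in M$ whose images generate $N$ over $A[g^{-1}]$ (after clearing denominators by a power of $g$) and to define a filtration $F_s$ of $M/\partial_1 M$ by taking $F_s$ to be the image of all elements of the form $P \cdot x_1^j m_i$ with $P \in \D'$ of order at most $s$, $1 \leq i \leq p$, and $0 \leq j < r$. The task is to verify that $F_s$ is exhaustive, is a good filtration over $\D'$, and has associated graded supported on a subvariety of $T^*\Spec A'$ of dimension at most $n-1$, which yields holonomicity by Bernstein's inequality. Two bookkeeping identities drive the argument: the relation $x_1^r \equiv -a_1 x_1^{r-1} - \cdots - a_r \pmod{g}$, combined with the finite generation of $N$, bounds the effective $x_1$-degree of any generator; and the commutator $[\partial_1, x_1] = 1$ together with the vanishing of $\partial_1 M$ in the quotient lets one push all $\partial_1$'s through words in the $x_1^j m_i$ and discard them.

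The main obstacle I anticipate is handling the $g$-torsion piece of $M$, namely the kernel $K$ of $M \to N$, which the localization discards. This $K$ is a holonomic $\D$-submodule of $M$ supported on the hypersurface $\{g = 0\}$, which is smooth and finite over $\Spec A'$ by Weierstrass preparation. A Kashiwara-type equivalence should then identify $K/\partial_1 K$ with a holonomic $\D'$-module, and similarly for the cokernel $Q$ of $M \to N$. Patching the three pieces by the right-exactness of the functor $(-)/\partial_1(-)$ applied to the four-term exact sequence $0 \to K \to M \to N \to Q \to 0$ then completes the proof.
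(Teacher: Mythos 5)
The paper does not prove Theorem~\ref{Essen}; it states it and cites van den Essen, so the comparison is whether your sketch would stand on its own. It has two substantive gaps. The first is the claim that $\{g=0\}$ is smooth over $\Spec A'$. A Weierstrass polynomial $g=x_1^r+a_1x_1^{r-1}+\cdots+a_r$ with $a_i$ in the maximal ideal of $A'$ reduces to $x_1^r$ modulo that maximal ideal, so for $r\ge 2$ the finite flat cover $\Spec A/(g)\to\Spec A'$ is totally ramified at the closed point and hence not \'etale, and $V(g)\subset \Spec A$ need not even be smooth over $k$ (take $g=x_1^2-x_2^3$). So the Kashiwara-type reduction you intend for the kernel $K$ and cokernel $Q$ of $M\to M[g^{-1}]$ is unavailable, and the singularity of the support is part of the problem, not a disposable side case.

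The second and more central gap is the exhaustiveness of your filtration $F_s$. The relation $x_1^r\equiv -a_1x_1^{r-1}-\cdots-a_r$ holds modulo $g$, but $M$ is not an $A/(g)$-module, so this congruence cannot be used to truncate the $x_1$-degree of elements of $M$. The truncation has to be achieved modulo $\partial_1 M$, and that is exactly the hard part of van den Essen's argument, formalized in this paper as Lemma~\ref{L1} and Proposition~\ref{L2}: from the $A[g^{-1}]$-dependence of $e,\partial_1 e,\partial_1^2 e,\dots$ one builds an operator $P=\sum a_i\partial_1^i$, with $a_r$ containing a pure power of $x_1$, such that $P(b)e\in\partial_1 M$ for all $b\in A$, and one then runs an $\m_B$-adic approximation in the power series ring to show that finitely many monomials $x_1^i$ together with the $P(x_1^\ell)$, $\ell\gg 0$, span $A$ over $B=k[\![x_2,\dots,x_n]\!]$. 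Without this mechanism (or an equivalent one) there is nothing forcing a general element of $M$ into the $\D'$-span of the finitely many $x_1^j m_i$ modulo $\partial_1 M$. A lesser point: the functor $(-)/\partial_1(-)$ is only right-exact, so splicing along your four-term sequence requires controlling the $\partial_1$-kernels on $K$, $M$, $N$, $Q$ as well, which your sketch does not address.
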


\begin{remark}{\rm Van den Essen showed by an example \cite{example} that for an arbitrary holonomic $\D$-module $M$, the quotient $M/\partial_1(M)$ need not to be a holonomic $\D$-module over the ring  $k[\![x_2, \ldots,x_n]\!]$. However, with the extra condition that $M$ is $x_1$-regular, this holds. }
\end{remark}

\begin{lemma}\label{L1} Let $A$ be the power series ring $k[\![x_1, \ldots,x_n]\!]$, let $M$ be a holonomic $\D$-module that is $x_1$-regular, and let $e$ be any element of $M$. Then there exists a differential operator of the form
\[P=a_0+a_1 \partial_1+\ldots+ a_r\partial_1^r \qquad a_i\in A
\] where $a_r$ has a pure power of $x_1$, and such that \[P(b)\cdot e\subset \partial_1(M) \quad \forall b\in A \, .\] 
\end{lemma}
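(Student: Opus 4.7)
Set $A' = k[\![x_2,\ldots,x_n]\!]$, $\D' = A'\langle\partial_2,\ldots,\partial_n\rangle$, and $N = M/\partial_1(M)$, and consider the map $\phi_e\colon A\to N$ defined by $b\mapsto \overline{be}$. Since $\partial_1$ commutes with every element of $A'$, the subspace $\partial_1(M)$ is $A'$-stable, so $N$ is naturally an $A'$-module and $\phi_e$ is $A'$-linear. The operator $P = \sum_{i=0}^r a_i\partial_1^i$, viewed as a $k$-endomorphism of $A$, is likewise $A'$-linear, and continuous in the $(x_1)$-adic topology. Thus the conclusion ``$P(b)\cdot e\in\partial_1(M)$ for every $b\in A$'' is equivalent to the vanishing of the composition $\phi_e\circ P\colon A\to N$. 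Writing $b = \sum_{j\ge 0} b_j x_1^j$ with $b_j\in A'$, and observing that $P(b) = \sum_j b_j\,P(x_1^j)$ converges in the $(x_1)$-adic topology (since $P(x_1^j)\in (x_1)^{j-r}A$ once $j\geq r$), the problem reduces to producing $P$ of the stated form with $\phi_e(P(x_1^j)) = 0$ in $N$ for every $j\ge 0$.

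Next I would invoke Theorem~\ref{Essen}: the module $N$ is holonomic, hence Noetherian, over $\D'$. Therefore the ascending chain of $\D'$-submodules
\[
\D'\overline{e}\,\subset\,\D'\overline{e}+\D'\overline{x_1 e}\,\subset\,\D'\overline{e}+\D'\overline{x_1 e}+\D'\overline{x_1^2 e}\,\subset\,\cdots
\]
stabilizes at some index $r$, yielding operators $\beta_0,\ldots,\beta_r\in \D'$ with $\overline{x_1^{r+1}e} = \sum_{i=0}^r \beta_i\cdot\overline{x_1^i e}$ in $N$. Since $x_1$ commutes with every element of $\D'$, this relation lifts to a single identity $\bigl(x_1^{r+1}-\sum_{i=0}^r x_1^i\beta_i\bigr)\cdot e\in\partial_1(M)$ in $M$. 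Note that its leading $x_1$-coefficient is the pure power $x_1^{r+1}$; however, the $\beta_i$ live in $\D'$ rather than in $A\langle\partial_1\rangle$.

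To transform this identity into an operator $P\in A\langle\partial_1\rangle$ of the claimed form, I would exploit the $x_1$-regularity of $M$ via Theorem~\ref{bjork}. Since $M[g^{-1}]$ is finitely generated over $A[g^{-1}]$ for a Weierstrass polynomial $g$ in $x_1$, each element $\beta_i\cdot(x_1^i e)$ lies in $M[g^{-1}]$ and therefore satisfies a finite $A[g^{-1}]$-linear relation with the iterated $\partial_1$-derivatives of $x_1^i e$. Clearing $g$-denominators and absorbing $g$-torsion rewrites the identity from the previous paragraph as $P(e)\in\partial_1(M)$ with $P = a_0 + a_1\partial_1 + \cdots + a_r\partial_1^r$ for suitable $a_i\in A$ and $a_r$ a pure power of $x_1$ (arising from the $x_1^{r+1}$ in the lifted relation together with powers of the leading $x_1$-coefficient of $g$). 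Finally, performing the same construction uniformly for the family $\{x_1^j e\}_{j\ge 0}$ in place of $e$ and packaging the outputs into a single operator yields the $P$ required by the first paragraph.

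The main obstacle I anticipate is the third step: passing from a single $\D'$-annihilation identity (valid only for $e$) to a uniform $\partial_1$-differential operator $P$ that achieves $\phi_e(P(x_1^j))=0$ for every $j\geq 0$, while keeping $a_r$ a pure power of $x_1$ rather than some more general polynomial in $x_2,\ldots,x_n$. The delicate coordination between the Noetherian input from Theorem~\ref{Essen} (which provides $\D'$-relations) and the Weierstrass-type finite generation from Theorem~\ref{bjork} (which enables the replacement of $\D'$-action by $\partial_1$-action modulo powers of $g$) is where the principal technical work lies.
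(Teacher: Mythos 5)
Your proposal has a genuine gap, which you yourself flag in the final paragraph: after obtaining the $\D'$-relation $\overline{x_1^{r+1}e}=\sum_{i\le r}\beta_i\overline{x_1^i e}$ with $\beta_i\in\D'=A'\langle\partial_2,\ldots,\partial_n\rangle$, there is no mechanism in your argument for replacing the operators $\beta_i$ (which involve $\partial_2,\ldots,\partial_n$) by an operator in $\partial_1$ alone. The sketch in your third paragraph does not accomplish this: the $A[g^{-1}]$-linear dependences among $\partial_1$-derivatives that you invoke hold for \emph{elements of $M$}, whereas what you need is to rewrite a differential operator, and there is no reason the resulting $A[g^{-1}]$-relations would assemble into a single $P\in A\langle\partial_1\rangle$ independent of $j$, nor that the leading coefficient $a_r$ would survive as a pure $x_1$-power. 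In fact your reduction in the first paragraph --- it suffices to have $\phi_e(P(x_1^j))=0$ for all $j\ge 0$ --- is exactly right, but you never construct such a $P$.

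The missing idea is a \emph{formal adjoint} trick, and it lets you bypass $N=M/\partial_1 M$ and Theorem~\ref{Essen} entirely (in the paper, Theorem~\ref{Essen} only enters later, in the inductive step of Proposition~\ref{P3}). Since the $\partial_1^i e$ are $A[g^{-1}]$-linearly dependent, one can clear denominators to get $Q(e)=0$ for an operator $Q=\sum_{i=0}^r d_i\partial_1^i$ with $d_i\in A$ and $d_r=-g^s$ (a power of a Weierstrass polynomial, hence with a pure $x_1$-power as leading term). Rewrite $Q$ in the right-normal form $Q=a_0-\partial_1 a_1+\cdots+(-1)^r\partial_1^r a_r$ with $a_i\in A$, and set $P=\sum_i a_i\partial_1^i$. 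The crucial algebraic identity, proved by induction on $i$, is that for any $b\in A$ one has $bQ = P(b) + \partial_1 R$ as operators, where $P(b)$ means $P$ applied to the power series $b$ and $R$ is some auxiliary operator. Applying both sides to $e$ and using $Q(e)=0$ gives $P(b)e=-\partial_1 R(e)\in\partial_1 M$ for every $b$ at once, with a single $P$. This adjoint manipulation --- converting ``$Q$ kills $e$'' into ``$P(b)\cdot e$ lands in $\partial_1 M$ for all $b$'' --- is the heart of the lemma and is absent from your write-up; without it, the argument does not close.
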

\demo While not given exactly in this form, our statement and proof are based on a careful reading of \cite{vdE}. According to Theorem \ref{bjork} there is an elements $g$ such that $M[g^{-1}]$ is  finitely generated as an $A[g^{-1}]$-module,  and furthermore $g$ can be taken to be  a Weierstra\ss \  polynomial in $x_1$. Let $x=x_1$ and $\partial=\partial_1$. Let $e$ be an element of $M$. Then $e, \partial e, \partial^2 e, \ldots, \partial^i e, \ldots$ are linearly dependent over $A[g^{-1}]$. Therefore, there exists an integer $r$ and elements $c_i \in A[g^{-1}]$ such that
\[\partial^r e= \sum_{0}^{r-1}c_i \partial^i e\, .\]
Clearing denominators we can write
\[g^s \partial^r e= \sum_{0}^{r-1}d_i \partial^i e\, ,\]
where $d_i \in A$. So we can consider the differential operator
\[Q=\sum_{0}^{r} d_i \partial^i \, ,\]
using the $d_i$ above for $0\le i \le r-1$ and $d_r=-g^s$. By construction $Q(e)=0$ in $M$ and $d_r$ is a Weierstra\ss \  polynomial in $x$.
There exist $a_i \in A$ such that the differential operator $Q$ can be written as 
\[Q=a_0-\partial a_1+\ldots+(-1)^r \partial^r a_r\, .\]
Let $P$ be the differential operator
\[P=a_0+a_1 \partial+\ldots+ a_r\partial^r \, .\]
Then for every $b\in A$ we claim there is an equality of differential operators
\begin{equation}\label{eq3}bQ=P(b)+\partial R\, ,\end{equation}
where $R$ is another differential operator and $P(b)$ means $P$ acting on $b\in A$. Then, if we apply the two operators defined in (\ref{eq3}) to $e$ we have,
\[0=bQ(e)=P(b) e+\partial R (e)\, .\]
This implies that for all $b\in A$, $P(b) e \subset \partial (M)$, thus establishing the desired conclusion. 

To complete the proof we need to prove the equality of differential operators in (\ref{eq3}). By linearity it suffices to show the claim for $Q=(-1)^i\partial^i a_i$ and $P=a_i \partial^i$. We need to show that $bQ-P(b) = \partial R_i$ for some operator $R_i$. To prove the claim we  show by induction on $i$ a stronger claim, more precisely,  that for all $f, g \in A$
\[(-1)^i  f\partial^i g=g\partial^i(f)+\partial R_i\,  ,\]
for some operator $R_i$. If $i=0$, we can take $R_0=0$ because $f g =gf$. If $i=1$ we have 
\[f\partial +\partial(f)=\partial f\, .\] Since $\partial(f)\in A$ we have $g\partial(f)= \partial(f) g$. Thus take $R_1=  -\partial fg$. Let $i \ge 2$ and assume that the statement holds for $i-1$. We have
\[(-1)^if \partial^i g= (-1)^i f \partial (\partial^{i-1} g)= (-1)^i [\partial f -\partial (f)] (\partial^{i-1} g)= (-1)^i \partial f \partial^{i-1} g +(-1)^{i-1}\partial(f)\partial^{i-1} g\]
By induction hypothesis we have
\[(-1)^{i-1}\partial(f)\partial^{i-1} g=g\partial^{i-1}(\partial(f))+\partial R_{i-1}\,\]
thus substituting in the previous equation we obtain 
\[(-1)^if \partial^i g= g\partial^{i-1}(\partial(f))+\partial R_{i-1}+(-1)^i \partial f \partial^{i-1} g= g\partial^i (f)+ \partial R_i\]
where $R_i= R_{i-1}+(-1)^i f \partial^{i-1} g$, which proves the claim.

\QED

\medskip 
The following lemma is our key technical result. 
\begin{proposition}\label{L2} Let $A$ be the power series ring $k[\![x_1, \ldots,x_n]\!]$, let $B=k[\![x_2, \ldots,x_n]\!]$, let $x=x_1$, and let $\partial=\partial_1$. Let $P$  be a differential operator of the form
\[P=\sum_{i=0}^r a_i \partial^i \qquad  a_i\in A\,,\] where $a_r$ has a pure power of $x$. Then there exist integers $s$ and $\ell_0$ such that every  $f \in A$ can be written in the form
\[f= \sum_{i=0}^{s-1} e_ix^i + \sum_{\ell \ge \ell_0} b_\ell P(x^\ell)\]
where $e_i, b_\ell \in B$. Furthermore, if $f\in A x^m$ for some $m \ge s$ then 
\[e_i \in \m_B^{\rho(m)} \quad \forall \ 0\le i \le s-1\, ,
\]
where $\rho$ is a function that tends to infinity with $m$. 
\end{proposition}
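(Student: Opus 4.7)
The strategy is a Weierstrass-type division. Using that $P$ acts $B$-linearly on $A=B[[x]]$, the sum $\sum_{\ell\geq\ell_0}b_\ell P(x^\ell)$ equals $P(g)$ where $g=\sum_\ell b_\ell x^\ell\in x^{\ell_0}A$. Thus the claim is equivalent to asking that the $B$-linear map
\[
\Phi:\;\Big(\bigoplus_{i<s}Bx^i\Big)\oplus x^{\ell_0}A\longrightarrow A,\qquad ((e_i),g)\longmapsto\sum_{i<s}e_ix^i+P(g),
\]
be a bijection whose inverse is continuous in the $\m_A$-adic topology.

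\emph{Step 1} (leading behaviour of $P(x^\ell)$). Writing $a_i=\sum_j a_{i,j}x^j$ with $a_{i,j}\in B$, let $s_i$ be the smallest $j$ for which $a_{i,j}$ is a unit in $B$ (possibly $\infty$). The hypothesis on $a_r$ forces $s_r<\infty$. Put $t=\min_i(s_i-i)$ and $i^\ast=\max\{i:s_i-i=t\}$. Expanding $P(x^\ell)=\sum_i a_i\,\ell(\ell-1)\cdots(\ell-i+1)\,x^{\ell-i}$, the coefficient of $x^{\ell+t}$ becomes the polynomial
\[
c(\ell)=\sum_{i:\,s_i-i=t}a_{i,t+i}\,\ell(\ell-1)\cdots(\ell-i+1)\in B[\ell],
\]
whose reduction modulo $\m_B$ is a polynomial over $k$ of degree $i^\ast$ with nonzero leading coefficient $\bar a_{i^\ast,t+i^\ast}$. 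In characteristic zero this has only finitely many integer roots, so $c(\ell)$ is a unit in $B$ for $\ell\geq\ell_0$ with $\ell_0$ sufficiently large. Set $s:=\ell_0+t$. A parallel computation shows that for $\ell\geq\ell_0$ and $J<\ell+t$ the $x^J$-coefficient of $P(x^\ell)$ lies in $\m_B$ (though generally nonzero).

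\emph{Step 2} (existence). Reducing $\Phi$ modulo $\m_B$, Step~1 makes $\bar\Phi$ triangular with unit diagonal entries (the identity on the $(e_i)$-factor and the units $c(\ell)\bmod\m_B\in k^\times$ on the $g$-factor), hence a $k$-linear isomorphism. For any $f\in A$, inverting $\bar\Phi$, lifting the result arbitrarily to $B$, subtracting from $f$, and iterating in the $\m_B$-adic filtration produces $\m_B$-adically convergent series $e_i=\sum_n e_i^{(n)}\in B$ and $b_\ell=\sum_n b_\ell^{(n)}\in B$ yielding $f=\sum_{i<s}e_ix^i+\sum_{\ell\geq\ell_0}b_\ell P(x^\ell)$.

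\emph{Step 3} (the ``furthermore'' clause, and the main obstacle). To bound $e_i$ when $f\in Ax^m\subseteq\m_A^m$, I would refine the iteration of Step~2 by tracking the $\m_A$-order of the residuals $f^{(k)}$ in parallel with their $\m_B$-order. Since $P$ lowers $\m_A$-order by at most a fixed constant $\delta$ (namely the defect of $\mathrm{ord}_{\m_A}(a_i)-i$ from $s_i-i$), one proves inductively that $f^{(k)}\in \m_B^kA\cap \m_A^{m-k\delta}$; consequently the $x^i$-coefficient of $f^{(k)}$ lies in $\m_B^{k+1}$ whenever $k<(m-i)/(\delta+1)$, so $e_i^{(k)}$ vanishes for all such $k$, forcing $e_i\in \m_B^{\lceil(m-s)/(\delta+1)\rceil}$ and thus $\rho(m)\to\infty$. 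Equivalently, $\Phi$ is a continuous $B$-linear bijection of complete $\m_A$-adically filtered modules and an open-mapping-type argument gives continuity of $\Phi^{-1}$. The technical subtlety driving this obstacle is the interplay between the $\m_B$- and $\m_A$-filtrations: because the low-$x$-order terms of $P(x^\ell)$ lie in $\m_B$ but are generally nonzero, the $e_i$ are not merely the truncations of $f$ below $x^s$, and corrections from the $b_\ell$ must be carefully propagated.
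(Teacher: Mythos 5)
Your Steps 1 and 2 are correct and essentially identical to the paper's argument: compute the shape of $P(x^\ell)$ as a power series in $x$, note that its first unit coefficient sits in a fixed offset $t$ from the leading degree once a polynomial in $\ell$ (nonzero because ${\rm char}\,k=0$) stops vanishing, and then run a successive-approximation division in the $\m_B$-adic filtration, passing to the limit by completeness of $B$.

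The gap is in Step 3, which is the whole point of the ``Furthermore'' clause. You assert the induction hypothesis $f^{(k)}\in \m_B^kA\cap\m_A^{m-k\delta}$ but justify it only with ``$P$ lowers $\m_A$-order by at most $\delta$.'' That is not the right invariant, and the two quantities you conflate need not agree. The operator bound $\mathrm{ord}_{\m_A}(P(g))\ge\mathrm{ord}_{\m_A}(g)-\max_i\bigl(i-\mathrm{ord}_{\m_A}(a_i)\bigr)$ controls how $P$ acts on an \emph{arbitrary} input, but the residual $f^{(k+1)}=f^{(k)}-\sum_\ell b_\ell^{(k)}P(x^\ell)-\cdots$ drops in $\m_A$-order by a different amount: to kill the $x^{\ell-r+t}$-coefficient you must use $P(x^\ell)$, whose lowest-degree terms sit at $x^{\ell-r}$, and the actual drop is governed by $(t-r)+\max_i(i-\mathrm{ord}_{\m_A}(a_i))$ where $t$ is the Weierstrass offset from Step 1, not by the operator bound alone. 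For instance, with $n=2$, $P=(x^{100}+y^{50})\,\partial_1$, one has $\max_i(i-\mathrm{ord}_{\m_A}(a_i))=-49$ (so $P$ in fact \emph{raises} $\m_A$-order), yet subtracting $b_{m-99}P(x^{m-99})$ from $x^m$ leaves a residual $-y^{50}x^{m-100}$ of $\m_A$-order $m-50$: the residual's order still drops, by $50$. Your parenthetical formula $\max_i\bigl[(s_i-i)-(\mathrm{ord}_{\m_A}(a_i)-i)\bigr]$ happens to give $50$ in this example, but it is not the quantity by which ``$P$ lowers $\m_A$-order,'' and moreover it is $+\infty$ whenever some $a_i$ with $i<r$ lies entirely in $\m_B A$ (so that $s_i=\infty$), rendering the conclusion $\rho(m)\to\infty$ vacuous.

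The paper avoids this by tracking a finer invariant than $\m_A$-order: the residual $f_k$ has $x$-order $\ge m-kt$, and its coefficient of $x^j$ lies in $\m_B^p$ for $j$ in the band $[m-pt,\,m-(p-1)t)$. This two-parameter filtration records exactly how the $\m_B$-depth grows as the $x$-degree slides down under the corrections, and it makes the induction step transparent. Your coarser $\m_A$-filtration could be made to work if you (i) identify the correct $\delta$ in terms of $t$ and the operator bound, (ii) restrict the max defining it to indices with $s_i<\infty$, and (iii) actually verify the inductive step for the residual; as written, the inference from the operator bound to the residual bound is missing, and that is the crux of the proposition.
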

\demo We first show that there exist integers $\ell_0$ and $t$ such that for all $\ell \ge \ell_0$ 
\[P(x^\ell)=\sum_{i=0}^{\infty}c_{\ell,i}x^i\]
where $c_{\ell,i} \in B$ satisfy the following conditions
\begin{itemize}
\item[(i)] $ c_{\ell,i}=0 \quad \ \quad \mbox{ \ \ \qquad  for }  i <\ell-r $
\item[(ii)]$ c_{\ell,i}\in \m_B \quad \quad \mbox{ \quad \quad for } \ell-r \le i <\ell-r+t $
\item[(iii)] $c_{\ell,i} \mbox{ is a unit in } B \quad  \mbox{ for } i=\ell-r+t $.
\end{itemize}
(There is no restriction on $c_{\ell,i}$ for $i > \ell-r+t$. ) Condition (i) is clear because $x^\ell$ has degree $\ell$ and $\partial^r$ is the highest differential in $P$.
To prove (ii) and (iii), write for $\ell \ge r$
\[P(x^\ell)=a_0x^\ell+\ell a_1  x^{\ell-1}+2{\ell \choose 2}a_2x^{\ell-2}+\ldots+ r! {\ell \choose r} a_rx^{\ell-r}\, .\]
Let $t$ be the least power of $x$ whose coefficient is a unit in B among all the power series
\[\{a_0 x^{r}, a_1x^{r-1}, \ldots, a_r\}\, .\]
Notice that $t$ exists because by hypothesis $a_r$ has a pure power of $x$.
Let $\lambda_i \in k$ be the constant term of the coefficient of $x^t$ in $a_ix^{r-i}$. By construction, at least one of the $\lambda_i$ is non zero. The constant term of the coefficient of $x^{\ell+t-r}$ in $P(x^\ell)$ is
\[g(\ell)=\lambda_0+\ell\lambda_1+\ldots + r! {\ell \choose r} \lambda_r\, .\]
Since $g(\ell)$ is a non zero polynomial in $\ell$, it has at most finitely many zeros. Choose $\ell_0$ such that $g(\ell)\not=0$ for all $\ell\ge \ell_0$. Now for any $\ell \ge \ell_0$ write $P(x^\ell)=\sum_{i=0}^{\infty}c_{\ell,i}x^i$, where $c_{\ell,i} \in B$. Then by construction the $c_{\ell,i}$ will satisfy the condition (ii) and (iii), namely, the first one that is a unit is $c_{\ell,\ell-r+t}$. 

To continue set $s=\ell_0-r+t$. Then we show that every $f\in A$ can be written in the form
\[f= \sum_{i=0}^{s-1} e_ix^i + \sum_{\ell \ge \ell_0} b_\ell P(x^\ell)\]
where $e_i, b_\ell \in B$. We claim, by induction on $k$, that 
\begin{equation}\label{eq2}
f\equiv f_k=\sum_{i=0}^{s-1} e_{i,k}x^i + \sum_{\ell \ge \ell_0} b_{\ell,k} P(x^\ell) \qquad \mod \m_B^k
\end{equation}
for suitable $e_{i,k}$ and $b_{\ell,k}$ in $B$ and furthermore $e_{i,k}\equiv e_{i,k+1}$ and $b_{\ell,k}\equiv b_{\ell,k+1}$ modulo $\m_B^k$.  First observe that for all $\ell \ge \ell_0$ 
\begin{equation}\label{eq1}
P(x^\ell)\equiv \alpha_{\ell} x^{\ell-r+t} \mod \m_B \, ,
\end{equation}
where $\alpha_{\ell}$ is a unit in $A$. Here we use that $c_{\ell,i} \in \m_B$ for all $0\le i < \ell-r+t$. If the original power series $f$ is
\[f= \sum_{i=0}^{\infty} \beta_i x^i\]
with $\beta_i\in B$, then we define $f_1$ using  $e_{i,1}=\beta_i$ for $0\le i \le s-1$ and $b_{\ell,1}=\beta_{\ell-r+t}(\alpha_\ell)^{-1}$ for $\ell \ge \ell_0$. Thus $f \equiv f_1 \mod \m_B$ and the claim follows for $k=1$. For the $k+1$ step, we consider $f-f_k$. Notice that the coefficients $\gamma_{i,k}$ of $f-f_k$ as a power series in $x$ are all in $\m_B^k$. Therefore when we use $P(x^\ell)$ to adjust the $\ell-r+t$ coefficient of $f-f_k$, we have by (\ref{eq1})
\[\gamma_{\ell-r+t,k} P(x^\ell)\equiv \gamma_{\ell-r+t}\alpha_{\ell} x^{\ell-r+t} \mod \m_B^{k+1} \, .\]
Define $f_{k+1}=f_k +\sum_{\ell \ge \ell_0}\gamma_{\ell-r+t,k} (\alpha_\ell)^{-1}P(x^\ell)$. Then $f \equiv f_{k+1} \mod \m_B^{k+1}$.  Writing $f_{k+1}$ in the form (\ref{eq2}) we obtain the coefficients  $e_{i,k+1}$  and $b_{\ell,k+1}$ and observe that, by construction,  they are congruent to the coefficients $e_{i,k}$  and $b_{\ell,k}$ modulo $\m_B^k$. Now the desired assertion follows by passing to the limit: namely $e_i=\lim e_{i,k}$ and $b_\ell=\lim b_{\ell,k}$ as $k$ goes to infinity. 

To explain the 'Furthermore' statement, suppose that $f\in A x^m$. Then $f$, as a power series in $x$, begins in degree $\ge m$. We claim that $f_k$ starts in degree $\ge m-kt$ and the coefficients of $x^{j}$ for $m-pt\le j< m-(p-1)t$ are in $\m_B^{p}$. Recall that $P(x^\ell)$ as a power series in $x$ begins in degree $\ge \ell-r$ but its first unit coefficient is in degree $\ell-r+t$. We prove the claim by induction on $k$. For $k=1$ recall that $b_{\ell,1}=\beta_{\ell-r+t}(\alpha_\ell)^{-1}$ and the first $\beta_i$ that can be different from zero is $\beta_m$, hence the first $P(x^\ell)$ we are using is for $\ell=m+r-t$ and that can only start in degree $\ell-r=(m+r-t)-r=m-t$. Furthermore, the coefficients of $x^j$ for $m-t\le j < m$ of $P(x^{m+r-t})$ as power series in $x$  are in $\m_B$ by condition (ii) on the $c_{\ell,i}$ above, or, by the fact that $f_1\equiv f \ \mod \m_B$.  By the construction used to build $f_{k}$ from $f_{k-1}$ we see that $f_{k}$ satisfies our claim. Now the coefficients of $f$ are obtained by taking the limits of the coefficients of the $f_k$. Since $s$ is fixed and $m$ can be taken as large as we like, we have that $e_i$ for $0\le i \le s-1$ are contained in $\m_B^{\rho(m)}$ where $\rho(m)$ is a function that tends to infinity to $m$, approximately equal to $(m-s)/t$. \QED

\begin{remark}{\rm A result similar to this was proved by van den Essen \cite{vdE} but without the ``Furthermore" statement, which is crucial  to our proof.}
\end{remark}

\begin{proposition}\label{P3} Let $A$ be the power series ring $k[\![x_1, \ldots,x_n]\!]$, let $M$ be a holonomic $\D$-module, and let $N$ be a  finitely generated submodule  of $M$. Then there exists an integer $r$ such that \[\m^r N \subset (\partial_1, \ldots, \partial_n) M\, .\] 
\end{proposition}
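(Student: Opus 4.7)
I will prove Proposition~\ref{P3} by induction on $n$, with the trivial case $n=0$ (where $\m=0$) as the base. For the inductive step, since the statement depends only on the ideals $\m$ and $(\partial_1,\ldots,\partial_n)$, both of which are invariant under linear changes of variables, Theorem~\ref{bjork} allows us to assume $M$ is $x_1$-regular; Theorem~\ref{Essen} then ensures that $M/\partial_1 M$ is a holonomic $\D$-module over $B=k[\![x_2,\ldots,x_n]\!]$, setting the stage for the inductive hypothesis.

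Fix generators $e_1,\ldots,e_k$ of $N$. For each $j$, let $P_j=\sum_i a_{j,i}\partial_1^i$ be the operator from Lemma~\ref{L1} (so $a_{j,r_j}$ has a pure power of $x_1$ and $P_j(b)e_j\in\partial_1 M$ for every $b\in A$), and let $s_j,\rho_j$ be the corresponding data from Proposition~\ref{L2}. The key observation is that the image $\bar N$ of $N$ in $M/\partial_1 M$ is finitely generated as a $B$-module, namely by the images of $\{x_1^i e_j : 0\le i<s_j,\ 1\le j\le k\}$: any $f\in A$ has by Proposition~\ref{L2} a decomposition $f=\sum_{i<s_j}e_i x_1^i+\sum_\ell b_\ell P_j(x_1^\ell)$ with $e_i,b_\ell\in B$, so $fe_j\equiv\sum_{i<s_j}e_i\,x_1^i e_j\pmod{\partial_1 M}$ by Lemma~\ref{L1}. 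Applying the inductive hypothesis to the pair $(M/\partial_1 M,\bar N)$ then yields an integer $r'$ with $\m_B^{r'}\bar N\subset(\partial_2,\ldots,\partial_n)(M/\partial_1 M)$, which lifts to $\m_B^{r'}N\subset\partial M$.

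To upgrade this $\m_B$-estimate to the required $\m^r$-estimate, choose $m\ge\max_j s_j$ large enough that $\rho_j(m)\ge r'$ for every $j$ (possible since each $\rho_j\to\infty$), and set $r=r'+m-1$. Given $f\in\m^r$, expand $f=\sum_{i\ge 0}x_1^i\beta_i$ with $\beta_i\in B$; the condition $f\in\m^r$ forces $\beta_i\in\m_B^{r-i}$ for $i<r$. Split $f=g+h$ with $g=\sum_{i<m}x_1^i\beta_i$ and $h=\sum_{i\ge m}x_1^i\beta_i\in A x_1^m$. Each summand of $g\,e_j$ has coefficient $\beta_i\in\m_B^{r-i}\subset\m_B^{r'}$, so $g\,e_j\in\m_B^{r'}N\subset\partial M$. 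For $h\,e_j$, the ``Furthermore'' clause of Proposition~\ref{L2}, applied to $h\in A x_1^m$ with operator $P_j$, gives $h\,e_j\equiv\sum_{i<s_j}e_i^{(j)}x_1^i e_j\pmod{\partial_1 M}$ with $e_i^{(j)}\in\m_B^{\rho_j(m)}\subset\m_B^{r'}$, so $h\,e_j\in\partial M$ as well. Summing, $fe_j\in\partial M$ for each $j$ and $\m^r N=\sum_j\m^r e_j\subset\partial M$, completing the induction.

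The main obstacle is precisely the quantitative ``Furthermore'' clause of Proposition~\ref{L2}: the $B$-finiteness of $\bar N$ and the resulting $\m_B^{r'}N\subset\partial M$ follow from the qualitative parts of Lemmas~\ref{L1} and \ref{L2}, but without the $\rho_j\to\infty$ bound on the coefficients $e_i^{(j)}$ arising from $h\in A x_1^m$, one could not convert the $x_1$-direction vanishing into the $\m_B^{r'}$-vanishing needed to close the induction.
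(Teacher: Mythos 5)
Your proof is correct and follows essentially the same strategy as the paper: change variables to $x_1$-regularity, invoke Lemma~\ref{L1} and the quantitative ``Furthermore'' clause of Proposition~\ref{L2}, and induct on $n$ via Theorem~\ref{Essen}. The only cosmetic differences are that the paper first reduces to a single generator $e$, uses the fixed $B$-module $E=B(e,xe,\ldots,x^{s-1}e)$ in place of your $\bar N$ (their images in $M/\partial_1 M$ coincide), and sets $r=m_0+\rho(m_0)$ rather than your $r=r'+m-1$.
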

\demo We may assume that $N$ is generated by one element $e$. By Theorem \ref{bjork} we can make a change of variables so that $M$ is $x_1$-regular. Let $x=x_1$ and $\partial=\partial_1$.  By Lemma \ref{L1} there exists a differential operator of the form given in the lemma such that $P(a) e \subset \partial M$ for all $a\in A$. We apply Proposition \ref{L2} to this differential operator. By  Proposition \ref{L2} there exists an integer $s$ such that for all $f\in  A x^m$ with $m\ge s$ we have
\begin{equation}\label{eq4}f \cdot e \subset \m_B^{\rho(m)} E + \partial M\, ,\end{equation}
where $B=k[\![x_2, \ldots,x_n]\!]$ and $E=B( e, xe, \ldots, x^{s-1}e)$, because $P(x^\ell)e \subset \partial M$ for all $\ell$ by Lemma \ref{L1}.

We show the statement by induction on the number of variables  $n$. If $n=1$ then $B=k$ and thus $\m_B=0$. Therefore, by (\ref{eq4}) for all $f\in  A x^m$ with $m\ge s$ we have 
\[f \cdot e \subset  \partial M\,   ,\]
hence we obtain
\[ \m^s N \subset \partial M\, ,  \]
which is the desired assertion for $n=1$. 
If $n \ge 2$, we apply  Theorem \ref{Essen}, which  says $\overline{M}=M/\partial M$ is a holonomic $\D$-module over $B$. Let $\overline{E}$ be the image of $E$ in $\overline{M}$, then $\overline{E}$ is a finitely generated submodule of $\overline{M}$. By the induction hypothesis there exists an integer $t$  such that 
\[\m_B^{t} \overline{E}\subset (\partial_2, \ldots, \partial_n) \overline{M}\, .\]
This implies that 
\begin{equation}\label{eq5}\m_B^{t} E\subset (\partial_1, \ldots, \partial_n) M\, .\end{equation}
By Proposition \ref{L2} we can take $m_0$ large enough so that $m_0\ge s$ and $\rho(m_0)\ge t$. Take $r$ to be $r=m_0+\rho(m_0)$. We claim that  \[\m_A^r N=\m_A^r e \subset (\partial_1, \ldots, \partial_n) M\, .\] 
Write any monomial $\alpha \in  \m_A^r$ as $x^i \gamma$. Notice that either $i\ge  m_0$ or $\gamma \in \m_B^j$ with $j \ge  \rho(m_0)\ge t$. In the first case $\alpha \in x^{m_0} A$, hence by 
(\ref{eq4}) (notice (\ref{eq4}) applies because $m_0\ge s$)
\[\alpha e \subset \m_B^{\rho(m_0)} E + \partial M  \subset (\partial_1, \ldots, \partial_n) M\]
where the last inclusion hold by (\ref{eq5})  and because we chose $m_0$ in such a way $\rho(m_0) \ge t$. 
In the second case, the claim follows directly by (\ref{eq5}).

\QED

\bigskip


\section{Applications.}\label{App}

\medskip

We want to apply the results of Section~\ref{Main} to study the local cohomology module $H_I^r(A)$, where $V$ is a variety in $\mathbb P^n_k$ of codimension $r$, with homogenous ideal $I \subset A=k[x_0, \ldots, x_n]$. Unfortunately the result of Theorem~\ref{main} is not true for $\D$-modules over a polynomial ring  (see Example~\ref{E6.1}). So we need to pass to the completion.

If $M$ is a $\D$-module over the polynomial ring $A=k[x_0, \ldots, x_n]$, we can consider the completion $M\otimes_A \hat{A}$ where $\hat{A}=k[\![x_0, \ldots,x_n]\!]$. It has a natural structure of  $\D$-module over $\hat{A}$. Furthermore, there is a natural map on the de Rham cohomology groups: $ H^i_{DR}(M)\lto H^i_{DR}(\hat{M})$. Unfortunately, even for a holonomic $\D$-module over $A$, the map on de Rham cohomology may fail to be an isomorphism (see Example~\ref{E6.1}). It would be nice to have general conditions under which these maps are isomorphisms. For instance, if $M$ is also a graded $A$-module, and the $\partial_i$ act as graded $k$-linear maps of degree $-1$, then are the completion maps $\varphi^i$ isomorphisms? We do not know, so we will settle for a more limited criterion.

\begin{example}\label{E6.1} {\rm Let $A=k[x]$ and let $M$ be a free $A$-module of rank one. We denote its generator by $e\in M$, so that the elements of $M$ are written $ae$ for $a \in A$.  To give $M$ a structure of $\D$-module we can take $\partial e$ to be anything we like. So for example, let $\partial e=x^2 e$. Then for any power of $x$, $\partial(x^ne)=x^n\partial e+nx^{n-1}e=(x^{n+2}+nx^{n-1})e$, and we can extend to all of $M$ by linearity. Now it is clear that the map $\partial: M \lto M$ is injective, so that $H^0_{DR}(M)=0$. On the other hand, the image of $\partial$ is a  $k$-vector subspace of codimension $2$, so $H^1_{DR}(M)=M/\partial M$ has dimension $2$. This shows that Theorem~\ref{main} is false for $M$, since any nonzero $\D$-module homomorphism of $M$ to $E$ would have to be surjective, which is impossible since $M$ is finitely generated as an $A$-module.  

Now let consider $\hat{M}=M\otimes_A \hat{A}$. We claim that there is another generator of $\hat{M}$, call it $ue$, with $u\in \hat{A}$ a unit, for which $\partial(ue)=0$. To find $u$, we want $\partial(ue)=0$, hence $u\partial e+\partial u \cdot e=0 $. We need to solve the differential equation $\partial u=-x^2u$. Just take $u=e^{-\frac{x^3}{3}}$, which is a unit in $\hat{A}$. Thus $\hat{M}$ is isomorphic to the standard $\D$-module structure on $\hat{A}$ with $H^0_{DR}(\hat{M})=1$ and $H^1_{DR}(\hat{M})=0$. So we see that the passage to the completion does not preserve de Rham cohomology.  }
\end{example}

\begin{theorem}\label{T6.2} Let $A=k[\![x_0, \ldots,x_n]\!]$, let  $I\subset A$ be a homogenous ideal, and let $M$ be any of the local cohomology modules $M=H^i_I(A)$ with its $\D$-module structure. Then the completion maps 
\[H^j_{DR}(M)\lto H^j_{DR}(\hat{M})
\] where $\hat{M}=M\otimes_A \hat{A}$, are isomorphisms for all $j$. Furthermore $\hat{M}$ is also holonomic over $\hat{A}$. 
\end{theorem}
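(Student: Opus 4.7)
The holonomicity assertion is straightforward. Since $\hat{A}$ is flat over $A$, tensoring the \v{C}ech complex of $A$ with respect to generators of $I$ by $\hat{A}$ yields the \v{C}ech complex of $\hat{A}$ with respect to $I\hat{A}$. Taking $i$-th cohomology gives $\hat{M}\cong H^i_{I\hat{A}}(\hat{A})$, and Theorem~\ref{T2.1} applied to $\hat{A}$ shows this is a holonomic $\hat{A}$-module.

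For the de Rham cohomology isomorphism, my plan is to compare the two \v{C}ech--de Rham bi-complexes $K^{p,q} = \check{C}^p \otimes_A \Omega^q_{A/k}$ and $\hat{K}^{p,q} = \hat{\check{C}}{}^p \otimes_{\hat{A}} \hat{\Omega}{}^q_{\hat{A}/k}$, where $\check{C}^\bullet = \check{C}^\bullet(f_1, \ldots, f_s; A)$ for homogeneous generators $f_1, \ldots, f_s$ of $I$. The completion induces a morphism of bi-complexes $K \to \hat{K}$. Each term of $\check{C}^p$ is a direct sum of localizations $A_g$ at products $g$ of the $f_i$'s, and similarly for $\hat{\check{C}}{}^p$.

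The key technical step is to show that for each such localization $A_g$, the natural map $H^q_{DR}(A_g) \to H^q_{DR}(\hat{A}_g)$ is an isomorphism for all $q$. This can be verified by an explicit Koszul-type calculation analogous to the one in Example~\ref{E2.2}(4): $A_g$ and $\hat{A}_g$ have essentially identical algebraic descriptions as Laurent-type expansions in $g$, and the $\partial_i$-differentials match so that the extra formal power series terms can be explicitly resolved against the de Rham differential. Granting this, the first spectral sequence of the bi-complex (taking the vertical de Rham differential first) has $E_1^{pq} = H^q_{DR}(\check{C}^p)$ preserved by the morphism $K \to \hat{K}$. Hence the map of spectral sequences is an isomorphism from the $E_1$-page onwards, and in particular the induced map on total hypercohomology $\mathbb{\HH}^\bullet(K) \to \mathbb{\HH}^\bullet(\hat{K})$ is an isomorphism.

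The second spectral sequence of the same bi-complex has $E_2^{pq} = H^p_{DR}(H^q_I(A))$ (respectively $H^p_{DR}(H^q_{I\hat{A}}(\hat{A}))$ for $\hat{K}$), converging to the same total cohomology. The main obstacle is that the $E_\infty$-level isomorphism does not automatically yield the desired pointwise isomorphisms at $E_2$. I plan to resolve this by strong induction on $i$, combined with the five-lemma applied to the short exact sequences of $\D$-modules
\[0 \to B^i \to Z^i \to H^i_I(A) \to 0, \qquad 0 \to Z^{i-1} \to \check{C}^{i-1} \to B^i \to 0,\]
where $Z^j, B^j \subset \check{C}^j$ denote the cycles and boundaries of the \v{C}ech complex. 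Flatness of $\hat{A}$ preserves these SES, and their long exact sequences in de Rham cohomology are compatible on both sides via the completion morphism. The base case $i=0$ is immediate since $H^0_I(A)=\Gamma_I(A)=0$ for the domain $A$ with nonzero $I$. The most delicate point---managing the circular dependency among $B^i$, $Z^i$, and $H^i_I(A)$ appearing in the first SES---will be the principal difficulty; the strategy is to interleave the two SES chains and simultaneously propagate the ``completion preserves $H^\bullet_{DR}$'' property for the three families $\{B^j\}$, $\{Z^j\}$, $\{H^j_I(A)\}$, using the already-established total cohomology isomorphism to close the loop at each stage of the induction.
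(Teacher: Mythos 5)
Your reduction to a single homogeneous $g$ via the \v{C}ech--de Rham bicomplex is in the same spirit as the paper's reduction (they also reduce, using the \v{C}ech complex and the long exact sequence of de Rham cohomology, to the case $I=(f)$), and the holonomicity assertion matches the paper. The gap is in your ``key technical step,'' which is precisely where the real work lies and which you dismiss as ``an explicit Koszul-type calculation analogous to the one in Example~\ref{E2.2}(4).'' That example only works because $E=H^n_{\m}(R)$ is a monomial object; for a general homogeneous polynomial $g$ there is no such term-by-term matching. Worse, the assertion $H^q_{DR}(A_g)\cong H^q_{DR}(\hat{A}_g)$ is \emph{false} without homogeneity: take $A=k[x]$ and $g=1+x$. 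Then $\hat{A}_g=\hat{A}=k[\![x]\!]$, so $H^1_{DR}(\hat{A}_g)=0$, while $A_g$ is the coordinate ring of $\mathbb{A}^1\setminus\{-1\}\cong\mathbb{G}_m$ and $H^1_{DR}(A_g)=k$. Thus homogeneity of $g$ must enter the argument in an essential way, and a naive ``Laurent-expansion'' comparison cannot see it.

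This is exactly why the paper does something genuinely geometric at this point. After reducing to $I=(f)$, they translate $H^j_{DR}(H^1_f(A))$ into the algebraic de Rham \emph{homology} of the affine cone $Y\subset\mathbb{A}^{n+1}$ over the hypersurface $V\subset\mathbb{P}^n$ (Theorem~\ref{T4.3}), and likewise $H^j_{DR}(\hat M)$ into $H^{DR}_i(Y')$ for the formal cone $Y'$. The completion isomorphism then becomes a ``strong excision'' statement $H^{DR}_i(Y)\cong H^{DR}_i(Y')$ (Proposition~\ref{P6.3}), which they prove by comparing the Thom--Gysin / cone sequences for $Y$ and $Y'$ against the same projective variety $V$ (Proposition~\ref{P4.6}(4) and \cite[III,3.2]{ADRC}); it is precisely the cone structure, i.e.\ the $\mathbb{G}_m$-action coming from homogeneity, that makes the two sides agree. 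You would need to supply an argument of this nature to close the hole; once that is done, your spectral-sequence bookkeeping in the final paragraph is plausibly workable, though the paper sidesteps most of it by proving the statement directly for $M=A_f/A$ and then using the \v{C}ech/LES reduction.
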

\begin{proof} The last statement follows from Theorem~\ref{T2.1} and the fact that
\[\hat{M}=H^i_I(A)\otimes_A \hat{A}=H^i_{\hat{I}}(\hat{A})\, .
\]
We will {\it pull ourselves up by our bootstraps} using the earlier results of Section \ref{S4}. First of all, since the local cohomology $H^i_I(A)$ can be computed from the \v{C}ech complex of localizations of $A$ at product of the $f_i$, where $\{f_1, \ldots, f_s\}$ is a set of generators of $I$, and a short exact sequence of modules gives a long exact sequence of de Rham cohomology, we reduce to the case where $I=(f)$ is generated by a single homogeneous polynomial $f\in A$. In this case there is only one non-zero local cohomology, namely, $M=H^1_f(A)=A_f/A$. Let $V$ be the corresponding hypersurface in $\mathbb P^n_k$, and let $Y\subset \mathbb A^{n+1}_k$ be the affine cone over $V$, namely, the affine subscheme of ${\rm Spec}\, A$ defined by $f$. Then the hypotheses of Theorem~\ref{T4.3} are satisfied and hence by that theorem we have that for each $j$ 
\[H^j_{DR}(M)=H^{DR}_{2n+1-j}(Y)
\]
the algebraic de Rham homology of $Y$. 

The same proof as for Proposition~\ref{P4.2} and Theorem~\ref{T4.3}, carried out over the formal power series ring $\hat{A}=k[\![x_0, \ldots,x_n]\!]$, will show that
\[H^j_{DR}(\hat{M})=H^{DR}_{2n+1-j}(Y')
\]
where $Y'\subset {\rm Spec}\, \hat{A}$ is the subscheme defined by the same polynomial $f\in \hat{A}$. Here we use the local theory of algebraic de Rham cohomology and homology (\cite[III]{ADRC}). Therefore we just need to show that the natural maps $H^{DR}_i(Y)\lto H^{DR}_i(Y')$ are isomorphisms for all $i$. This is a question purely in the theory of algebraic de Rham homology, which we prove next. 
\end{proof}

\begin{proposition}\label{P6.3} $($Strong excision for homology$)$ Let $I$ be a homogeneous ideal in $A=k[x_0,\ldots,x_n]$ and let $Y$ be the affine scheme in ${\rm Spec}\, A=\mathbb A_k^{n+1}$ defined by $I$. Let $\hat{A}=k[\![x_0, \ldots,x_n]\!]$, let $\hat{I}=I\hat{A}$, and let $Y'\subset {\rm Spec}\, \hat{A}$ be defined by $\hat{I}$. Then there are natural isomorphisms of de Rham homology
\[H^{DR}_i(Y)\lto H^{DR}_i(Y')
\]
for each $i$. 
\end{proposition}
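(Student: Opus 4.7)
The plan is to prove the proposition by a two-stage reduction to a comparison of de Rham cohomology on standard affine opens, where the homogeneity of $I$ enables a direct graded-piece argument.

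First, the distinguished triangle $R\Gamma_Y\Omega_X^{\bullet}\to\Omega_X^{\bullet}\to Rj_*\Omega_{X\setminus Y}^{\bullet}$ (with $j\colon X\setminus Y\hookrightarrow X$) gives a long exact sequence relating $H_i^{DR}(Y)$, $H_i^{DR}(X)$, and $H_i^{DR}(X\setminus Y)$, and a parallel sequence holds for $X'$. Since $X=\Spec A$ and $X'=\Spec \hat{A}$ are (formally) smooth affine spaces over $k$, the algebraic Poincar\'e lemma (cf.\ Example~\ref{E2.2}(3)) gives $H_i^{DR}(X)\cong H_i^{DR}(X')$. By the five-lemma, the proposition reduces to showing $H_i^{DR}(X\setminus Y)\cong H_i^{DR}(X'\setminus Y')$.

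Next, let $f_1,\ldots,f_s$ be homogeneous generators of $I$. Then $X\setminus Y$ is covered by the $\Spec A_{f_i}$, and $X'\setminus Y'$ by the $\Spec \hat{A}_{f_i}$. The associated \v{C}ech--Mayer--Vietoris spectral sequences, built respectively from $H^*_{DR}(A_g)$ and $H^*_{DR}(\hat{A}_g)$ for products $g=f_{i_1}\cdots f_{i_k}$, admit a comparison morphism induced by $A\hookrightarrow \hat{A}$. A page-by-page five-lemma argument reduces the proposition further to showing $H^*_{DR}(A_g)\cong H^*_{DR}(\hat{A}_g)$ for each homogeneous product $g$ of positive degree.

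For this final comparison, we exploit the $\mathbb{G}_m$-action on $\Spec A_g$ coming from the grading of $A$. The de Rham complex $\Omega^{\bullet}_{A_g}$ decomposes as $\bigoplus_{m\in\mathbb{Z}}(\Omega^{\bullet}_{A_g})_m$ with $d$ preserving each graded piece; moreover, since any homogeneous element of $\hat{A}_g$ already lies in $A_g$, the graded degree-$m$ part of $\hat{\Omega}^{\bullet}_{\hat{A}_g}$ coincides with $(\Omega^{\bullet}_{A_g})_m$. By Theorem~\ref{T3.1}(2), $H^*_{DR}(A_g)$ is finite-dimensional, hence supported in only finitely many graded degrees. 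The main obstacle is to conclude from this that $H^*_{DR}(\hat A_g) \cong H^*_{DR}(A_g)$: although the graded pieces agree, the complex $\hat{\Omega}^{\bullet}_{\hat{A}_g}$ contains additional ``formal sum'' cochains absent from $\Omega^{\bullet}_{A_g}$, and one must verify that every closed form in $\hat{\Omega}^{\bullet}_{\hat{A}_g}$ can be cohomologously modified to lie in the finite set of graded degrees supporting $H^*_{DR}(A_g)$, using the vanishing of cohomology in each non-supporting degree together with the ``uniform denominator'' condition that characterizes elements of $\hat{A}_g$.
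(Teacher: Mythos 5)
Your strategy is genuinely different from the paper's, and you are right that the final step---comparing $H^*_{DR}(A_g)$ with $H^*_{DR}(\hat{A}_g)$ for a homogeneous $g$---is where the real work lies; as written it is a gap, not a proof. Agreement of the graded pieces together with finite-dimensionality of $H^*_{DR}(A_g)$ does not by itself give the conclusion: $\hat{A}_g$ is not the degree-wise product of the pieces $(A_g)_m$, but a restricted subobject (a common power of $g$ must clear all denominators, and degrees are correspondingly bounded below). To show that a closed form in $\hat{\Omega}^{\bullet}_{\hat{A}_g}$ is cohomologous to one concentrated in the finitely many degrees carrying cohomology, you would need the contracting homotopies in the non-supporting degrees to be controlled uniformly so that they respect this restriction, and nothing in your setup supplies that estimate. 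A secondary imprecision: $\Spec\hat{A}$ is not a smooth $k$-scheme of finite type, so invoking the Poincar\'e lemma plus Theorem~\ref{T3.1}(5) to identify $H_i^{DR}(X')$ elides the fact that the right framework for $Y'$ is the local theory of \cite[III]{ADRC} with continuous (completed) differentials, not naive K\"ahler differentials of $\hat{A}_g$ over $k$.

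The paper avoids the affine comparison altogether. It invokes the Gysin-type exact sequence of \cite[III, 3.2]{ADRC},
\[
\cdots \lto H_{i+1}^{DR}(Y') \lto H_i^{DR}(V) \lto H_{i-2}^{DR}(V) \lto H_i^{DR}(Y') \lto \cdots\, ,
\]
relating the local de Rham homology of $Y'$ to the homology of the projective scheme $V$ cut out by $I$. Proposition~\ref{P4.6}(4)---whose proof, as the authors point out, does not actually use the lci hypotheses in force there---yields the identical sequence with $Y$ in place of $Y'$, and the completion map induces a compatible map of long exact sequences. Since the terms at $V$ are literally the same on both sides, the five lemma gives $H_i^{DR}(Y)\cong H_i^{DR}(Y')$. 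This reduces everything to the common projective scheme $V$, where no completion enters, and so sidesteps the delicate graded-piece comparison your argument runs into.
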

\begin{proof} Let $V\subset \mathbb P^n_k$ be the projective scheme defined by $I$. Then according to \cite[III, 3.2]{ADRC} there is an exact sequence
\[\ldots \to H_{i+1}(Y')\to H_i(V) \xrightarrow{} H_{i-2}(V)\to H_i(Y')\to \ldots
\]
We have established in Proposition~\ref{P4.6}(4) the same sequence with $H_i(Y)$ in place of $H_i(Y')$. Note that this does not depend on the special hypotheses of Theorem~\ref{T4.3} and Proposition~\ref{P4.6}. Since there are compatible maps between these sequences, we conclude that $H_i(Y)\xrightarrow{\sim} H_i(Y')$ for all $i$. 
\end{proof}

\begin{theorem}\label{T6.4}$($Main Theorem$)$ Let $V$ be a nonsingular variety of codimension $r$ in the projective space $\mathbb P^n_k$. Let $I$ be the homogenous ideal of $\, V$ in $A=k[x_0,\ldots,x_n]$. Then the local cohomology module $M=H^r_I(A)$ has a simple sub-$\D$-module $M_0$ with support on $Y$, the cone over $V$, and the quotient $M/M_0$ is  a direct sum of $b_d-b_{d-2}$ copies of $E$, the injective hull of $k$ over $A$, where $d={\rm dim}\, V$, and $b_i$ are the Betti numbers $b_i={\rm dim}\, H_i^{DR}(V)$. 
\end{theorem}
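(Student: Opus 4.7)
The plan is to produce a surjection $\varphi : M \twoheadrightarrow E^m$ with $m = b_d - b_{d-2}$ and take $M_0 := \ker \varphi$. By Theorem~\ref{T4.8}(2) we have $\dim_k H^{n+1}_{DR}(M) = b_d - b_{d-2} =: m$. Since Theorem~\ref{main} is stated over a power series ring, I first pass to the completion $\hat{M} := M \otimes_A \hat{A}$ with $\hat{A} = k[\![x_0, \ldots, x_n]\!]$. By Theorem~\ref{T6.2}, $\hat{M}$ is a holonomic $\D_{\hat{A}}$-module and the natural map $H^{n+1}_{DR}(M) \to H^{n+1}_{DR}(\hat{M})$ is an isomorphism, so $\dim_k H^{n+1}_{DR}(\hat{M}) = m$. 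Theorem~\ref{main} then produces a surjective $\D_{\hat{A}}$-homomorphism $\Phi : \hat{M} \twoheadrightarrow E^m$. Restricting $\Phi$ along the canonical map $M \to \hat{M}$ yields a $\D_A$-linear map $\varphi : M \to E^m$, which is still surjective: any $e \in E^m$ is annihilated by a power of $\m$, so an $\hat{A}$-linear preimage of $e$ from $\hat{M}$ can be rewritten, after truncating each $\hat{A}$-coefficient modulo that power of $\m$, as an $A$-linear combination of elements of $M$. Set $M_0 := \ker \varphi$; then $M/M_0 \cong E^m$, and $M_0 \subseteq M$ has support in $Y$.

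To show $M_0$ is simple I assemble three ingredients. (i) \emph{Simplicity on the punctured affine space.} The restriction $M_0|_U$ to $U := \Spec A \setminus \{\m\}$ equals $M|_U$ (since $E^m|_U = 0$), and this is a simple $\D_U$-module: nonsingularity of $V$ makes $Y \cap U$ a smooth closed subvariety of $U$ of codimension $r$, so the classical identification of $H^r_Z(\mathcal{O}_X)$ with the simple structure $\D$-module $\mathcal{B}_{Z|X}$ along a smooth closed embedding (Kashiwara's equivalence, \cite[1.6.1]{HTT}) applies. (ii) \emph{No nonzero $\D$-map from $M_0$ to $E$.} Flat base change to $\hat{A}$ yields a short exact sequence $0 \to \hat{M}_0 \to \hat{M} \to E^m \to 0$, and the associated de Rham long exact sequence in $n+1$ variables terminates at $H^{n+1}_{DR}(E^m) \to 0$, so the induced map $H^{n+1}_{DR}(\hat{M}) \to H^{n+1}_{DR}(E^m) = k^m$ is surjective. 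Both sides having dimension $m$, it is an isomorphism, giving $H^{n+1}_{DR}(\hat{M}_0) = 0$. Corollary~\ref{MC} then forces $\Hom_{\D_{\hat{A}}}(\hat{M}_0, E) = 0$, and this transfers to $\Hom_{\D_A}(M_0, E) = 0$ since $E$ is $\m$-torsion. (iii) \emph{No $E$-submodule of $M_0$.} Here $\Gamma_\m(M_0) = 0$ follows from $\Gamma_\m(M) = 0$, which comes from the Grothendieck spectral sequence $E_2^{p,q} = H^p_\m(H^q_I(A)) \Rightarrow H^{p+q}_\m(A)$ (associated to $\Gamma_\m \circ \Gamma_I = \Gamma_\m$). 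In total degree $r$, the only potentially nonzero $E_2$-term is $E_2^{0,r} = \Gamma_\m(M)$ (since $H^q_I(A) = 0$ for $q < r$ by grade), all outgoing differentials land in vanishing terms of the same form, and $H^r_\m(A) = 0$ since $A$ is Cohen-Macaulay of dimension $n+1 > r$.

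Simplicity of $M_0$ now follows formally. Let $N \subseteq M_0$ be a nonzero $\D$-submodule. By (i), $N|_U$ equals $0$ or $M_0|_U$. If $N|_U = 0$, then $N$ has support in $\{\m\}$, hence is a nonzero direct sum of copies of $E$ (Example~\ref{E2.2}(5)), contradicting (iii). If $N|_U = M_0|_U$ and $N \subsetneq M_0$, then $M_0/N$ has support in $\{\m\}$, hence is a nonzero direct sum of copies of $E$, giving a nonzero element of $\Hom_{\D_A}(M_0, E)$ and contradicting (ii). Thus $N = M_0$, and $M_0$ is simple. The main obstacle in executing this plan is the coordination between the power series setting, where Theorems~\ref{main} and~\ref{MC} live, and the polynomial setting, where $M = H^r_I(A)$ and the geometry of $V$ live: descending the surjection $\Phi$ to a surjection on $M$, transferring $\Hom$ groups under completion, and establishing $\Gamma_\m(M) = 0$ via the spectral sequence are the technical points that allow the pieces to fit together.
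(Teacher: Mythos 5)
Your proposal is correct, and it reaches the simplicity of $M_0$ by a genuinely different route than the paper. The paper's construction of $M_0$ begins from the intermediate-extension machinery in \cite[Section 3.4]{HTT}: it takes $M_0$ to be the minimal extension of $\mathcal O_{Y-P}$, which is simple by that theory, and then identifies the quotient $M/M_0$ (supported at the vertex $P$) with the $E^m$ produced by Theorem~\ref{main}. You reverse the order of operations: you first build the surjection $\varphi\colon M\twoheadrightarrow E^m$ (via completion, Theorem~\ref{main}, and Theorem~\ref{T6.2}), define $M_0=\ker\varphi$, and then verify simplicity from scratch with three local-to-global ingredients: the restriction $M_0|_U$ is the simple structure $\D$-module of the smooth punctured cone $Y\setminus\{P\}$ (Kashiwara's equivalence); $\Hom_{\D}(M_0,E)=0$, extracted from $H^{n+1}_{DR}(\hat M_0)=0$ via the de Rham long exact sequence over $\hat A$ (here one should also note $H^n_{DR}(E^m)=0$ over $n+1$ variables, so the connecting map is zero) together with Corollary~\ref{MC}; and $\Gamma_{\m}(M)=0$, from the composite-functor spectral sequence $H^p_{\m}(H^q_I(A))\Rightarrow H^{p+q}_{\m}(A)$ and Cohen--Macaulayness of $A$. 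Your approach is more self-contained: it avoids importing the theory of minimal extensions wholesale and stays closer to the paper's stated aim of an algebraic exposition. It also makes explicit the fact $\Gamma_{\m}(M)=0$, which the paper's appeal to the minimal extension tacitly relies on (one needs it to know the minimal extension sits inside $M$ as a submodule rather than a mere subquotient). The price is that you must supply the three verifications rather than cite a single theorem, but each is a short argument from results already in the paper or from Kashiwara's equivalence, which the paper also cites.
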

\begin{proof} By Corollary~\ref{C4.5} , $H^{n+1}_{DR}(H^r_I(A))=H^{DR}_{n+1-r}(Y)$. On the other hand, since $V$ is nonsingular and ${\rm dim}\, V=d=n-r$, by Theorem~\ref{T4.8} the dimension of this homology group is $b_d-b_{d-2}$.

Next, letting $M=H^r_I(A)$, we apply Theorem~\ref{T6.2} to see that $H^j_{DR}(M)=H^j_{DR}(\hat{M})$ for each $j$. Therefore, by Theorem~\ref{main} and Corollary~\ref{MC}, we have a surjective map $\hat{M}\lto E^m$ with $m=b_d-b_{d-2}$, and ${\rm dim}\, \Hom_{\D}(\hat{M},E)=m$. Composing with the natural map $M \lto \hat{M}$ we obtain a map $M \lto E^m$, which must be surjective since $H^{n+1}_{DR}(M)\cong H^{n+1}_{DR}(\hat{M})$.

On the other hand, since $Y$ has only one singular point at $P$, it follows from the general theory \cite[Section 3.4]{HTT}, that if we take the simple $\D_Y$-module $\mathcal O_Y$ on the smooth part $Y-P$ of $Y$, then $M$ contains a simple $\D$-module, the {\it minimal extension} of $\mathcal O_Y$ to $X$ in the sense of \cite[3.4.2]{HTT} and that the quotient $M/M_0$ has support at $P$. That quotient must be a sum of copies of $E$ (see Example~\ref{E2.2} (5)), and therefore is equal to the quotient $E^m$ found above. 
\end{proof}

\begin{corollary}\label{C6.5} If $\, V=\mathcal C$ is a nonsingular curve of genus $g$ in $\mathbb P^n_k$, then there is  just one nonzero local cohomology group $M=H^{n-1}_I(A)$. It has a simple sub-$\D$-module $M_0\subset M$ supported on the cone over $\mathcal C$, and the quotient $M/M_0$ is isomorphic to $E^{2g}$. In particular, if $\mathcal C$ is a nonsingular rational curve, then $M=H^{n-1}_I(A)$ is a simple $\D$-module. 
\end{corollary}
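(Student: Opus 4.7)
The plan is to specialize the Main Theorem (Theorem~\ref{T6.4}) to the curve case and then substitute the known de Rham Betti numbers of a smooth projective curve; essentially everything has already been done in the earlier sections.

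First I would argue that $M = H^{n-1}_I(A)$ is the unique nonvanishing local cohomology module. Since $\mathcal{C}$ has dimension $d = 1$ in $\mathbb{P}^n_k$, its codimension is $r = n - 1$. By the depth characterization in Proposition~\ref{P4.1}, $H^i_I(A) = 0$ for $i < r$, and Theorem~\ref{T4.8}(1) gives $H^i_I(A) = 0$ for $i \geq n$. The intermediate range $r < i < n$ is empty, so the only possibly nonzero module is $M = H^{n-1}_I(A)$.

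Next, I would apply Theorem~\ref{T6.4} directly with $V = \mathcal{C}$. It produces a simple sub-$\D$-module $M_0 \subset M$ supported on the affine cone $Y$ over $\mathcal{C}$, and identifies the quotient $M/M_0 \cong E^m$ with $m = b_d - b_{d-2}$. Specializing to $d = 1$ this becomes $m = b_1 - b_{-1}$. By Proposition~\ref{P3.3} the nonsingular projective curve $\mathcal{C}$ of genus $g$ has $b_1 = 2g$, and by Theorem~\ref{T3.1}(3) the de Rham homology vanishes in negative degrees, so $b_{-1} = 0$. Hence $m = 2g$ and $M/M_0 \cong E^{2g}$.

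For the rational case I would simply observe that $g = 0$ forces $E^{2g} = 0$, so the inclusion $M_0 \hookrightarrow M$ is an isomorphism and $M$ itself is the simple $\D$-module $M_0$. I do not anticipate any genuine obstacle: the content of the corollary is entirely absorbed by Theorem~\ref{T6.4}, combined with the elementary computation $b_1(\mathcal{C}) = 2g$ from Proposition~\ref{P3.3}.
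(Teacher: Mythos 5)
Your proposal is correct and follows essentially the same approach as the paper: specialize Theorem~\ref{T6.4} to the curve and plug in the Betti numbers from Proposition~\ref{P3.3}. The paper's proof is terser (it simply records $\dim H_i(V)=1,2g,1$ and computes $m=b_1-b_{-1}=2g$), while you additionally spell out why the range $r<i<n$ is empty so that $H^{n-1}_I(A)$ is the only nonvanishing module; both are the same argument.
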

\begin{proof} Indeed, ${\rm dim}\, H_i(V)=1,2g,1$ for $i=0,1,2$, respectively, hence, $m=b_1-b_{-1}=2g$. 
\end{proof}

\begin{corollary}\label{C6.6} If $\, V$ is any embedding of a projective space $\mathbb P^d$ in another projective space $\mathbb P^n$, then the local cohomology group $M=H_I^{n-d}(A)$ is a simple $\D$-module. 
\end{corollary}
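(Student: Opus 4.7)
The plan is to apply the Main Theorem~\ref{T6.4} directly to $V=\mathbb P^d$, which is a nonsingular (in fact irreducible) variety of codimension $r=n-d$ in $\mathbb P^n_k$. The theorem then produces a simple sub-$\D$-module $M_0\subseteq M=H^{n-d}_I(A)$ with quotient $M/M_0\cong E^m$, where $m=b_d-b_{d-2}$ and $b_i=\dim H^{DR}_i(\mathbb P^d)$. All that remains is to show $m=0$, which forces $M=M_0$ simple.

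The Betti number computation is immediate from the content already established. By Example~\ref{E3.2}(2), the de Rham cohomology satisfies $H^i_{DR}(\mathbb P^d)=k$ exactly when $i$ is even and $0\le i\le 2d$, and vanishes otherwise. Since $\mathbb P^d$ is smooth, Poincar\'e duality (Theorem~\ref{T3.1}(5)) transfers this to homology, giving $b_i=1$ when $i$ is even and lies in $[0,2d]$, and $b_i=0$ otherwise. A short parity check then shows $b_d=b_{d-2}$ whenever $d\ge 1$: both vanish when $d$ is odd, both equal $1$ when $d\ge 2$ is even, and in the edge case $d=1$ we have $b_1=0=b_{-1}$. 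Thus $m=b_d-b_{d-2}=0$ and Theorem~\ref{T6.4} gives $M=M_0$, a simple $\D$-module.

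I expect essentially no obstacle; once the Main Theorem is available, this is a one-line specialization together with the well-known de Rham cohomology of projective space. The only points of mild care are the two boundary cases lying outside the hypotheses of Theorem~\ref{T6.4}. When $d=n$ the embedding is the identity, $I=0$, and $M=H^0_I(A)=A$ is simple by Example~\ref{E2.2}(1). When $d=0$, $V$ is a single point and $I$ is a complete intersection of $n$ linear forms; after a linear change of variables one may assume $I=(x_1,\ldots,x_n)$, and a direct \v{C}ech-complex computation identifies $M=H^n_I(A)$ with a $\D$-module that, under Kashiwara's equivalence for modules supported on the $x_0$-axis, corresponds to the simple $\D_{k[x_0]}$-module $k[x_0]$, hence is simple. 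Both degenerate cases are dispatched in a sentence, completing the proof.
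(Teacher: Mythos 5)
Your proof is correct and takes the same route as the paper: specialize the Main Theorem~\ref{T6.4} to $V=\mathbb P^d$ and use the standard Betti numbers of projective space (Example~\ref{E3.2}(2) plus Poincar\'e duality) to see $b_d=b_{d-2}$. The paper's own proof is exactly this one-line observation, with no edge-case analysis at all, so the mathematical content is identical. Your extra treatment of $d=0$ is, however, a genuine improvement rather than mere pedantry: Theorem~\ref{T6.4} rests on Theorem~\ref{T4.8} and Proposition~\ref{P4.6}, both of which explicitly require $\dim V \ge 1$, and for $d=0$ the formula reads $b_0-b_{-2}=1-0=1\neq 0$, so the general argument does not even give the right answer; the direct computation you offer (complete intersection ideal, $M\cong k[x_0]\otimes_k H^n_{(x_1,\ldots,x_n)}(k[x_1,\ldots,x_n])$, simplicity via Kashiwara's equivalence reducing to the simple $\D_{k[x_0]}$-module $k[x_0]$) is the correct way to dispose of it. The $d=n$ case you dispatch is strictly speaking excluded by the word ``another'' in the statement, but costs nothing and is handled correctly.
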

\begin{proof} Indeed, the homology of $V$ has dimension $1$ in even degree and dimension zero in odd degrees, so in any case $b_d-b_{d-2}=0$. 
\end{proof}

\begin{remark}\label{R6.7}{\rm In the special case of the Veronese embedding of $\mathbb P^d_k$ in $\mathbb P^n_k$, this was proved via an entirely different method, using representation theory, by Raicu \cite{R}. 

}
\end{remark}

\begin{remark}\label{R6.8}{\rm For a singular projective variety $V\subset \mathbb P^n_k$ of codimension $r$, think of a stratification of $V$ by locally closed nonsingular subvarieties. Then we expect $M=H^r_I(A)$ to have one simple sub-$\D$-module corresponding to the smooth part of $V$, and a succession of contributions coming from the strata of the singular locus, and finally a quotient that is equal to ${\rm dim}\, H^{DR}_{d+1}(Y)$ copies of $E$ as before. 

In particular, if $V=\mathcal C$ is an integral curve, then we expect one component for the smooth part of $\mathcal C$, then at each singular point $P$, $n_P-1$ copies of the injective hull of the line cone over $P$ (in the notation of Proposition~\ref{P3.4}), and then $h_1(\mathcal C)$ copies of $E$.

This may be clear to readers familiar with the Riemann-Hilbert correspondence and perverse sheaves, but we have not worked out details of the proof in our algebraic formulation. 

}
\end{remark}

\bigskip

\bigskip

\bigskip\noindent{\bf Acknolwedgment.} Part of this paper was written at the 
 Centre International de Rencontres Math\'emati\-ques (CIRM) in Luminy, France, while the authors participated in a Recherches en Bin\^{o}me.  We are very appreciative of the hospitality offered by the Soci\'et\'e Math\'ematique de France. In addition, we would like to thank Alexander Beilinson, Claudiu Raicu, and Uli Walther  for helpful discussions on the subject.

\end{document}